\theoremstyle{plain}
\newtheorem{theorem}{Theorem}[section]
\newtheorem{lemma}[theorem]{Lemma}
\newtheorem{corollary}[theorem]{Corollary}
\newtheorem{proposition}[theorem]{Proposition}
\newtheorem{claim}{Claim}[theorem]
\theoremstyle{definition}
\newtheorem{remark}[theorem]{Remark}
\newtheorem{problem}[theorem]{Problem}
\newtheorem{question}[theorem]{Question}
\DeclareMathOperator{\hull}{Hull}
\DeclareMathOperator{\icc}{I_{cc}}
\newcommand{\iccp}[1]{\mathop{\mathrm{I_{cc}^{\text{$#1$}}}}}
\DeclareMathOperator{\ifc}{I_{fc}}
\DeclareMathOperator{\ip}{I_{P_3}}
\DeclareMathOperator{\ips}{I_{P_3}^*}
\DeclareMathOperator{\hn}{hn}
\DeclareMathOperator{\hncc}{hn_{cc}}
\DeclareMathOperator{\incc}{in_{cc}}
\DeclareMathOperator{\hnfc}{hn_{fc}}
\DeclareMathOperator{\hnp}{hn_{P_3}}
\DeclareMathOperator{\hnps}{hn_{P_3^*}}
\newcommand{\CNF}{\textsf{CNF}\xspace}
\newcommand{\Nat}{\mathbb{N}}
\newcommand{\per}{per}
\newcommand{\thm}[2]{\vspace{0.2cm}\noindent\textbf{Theorem~#1.} \emph{#2}\vspace{0.2cm}}
\newenvironment{claimproof}[1]{\par\noindent\underline{Proof:}\space#1}{\hfill $\blacksquare$}
\journal{}
\begin{document}

\begin{frontmatter}
\title{Cycle convexity and the tunnel number of links}

\author[ufcmat]{Julio~Araujo~\fnref{t2,t4}}
\author[ufccomp]{Victor~Campos~\fnref{t4}}
\author[ufcmat]{Darlan~Gir\~ao}
\author[cmuc]{Jo\~ao~Nogueira~\fnref{t5}}
\author[cmuc]{Ant\'onio~Salgueiro~\fnref{t5}}
\author[ufcmat]{Ana~Silva~\fnref{t2,t4}}
\address[ufcmat]{Department of Mathematics, Federal University of Cear\'a, Fortaleza, Brazil}
\address[cmuc]{CMUC, Department of Mathematics, University of Coimbra, Coimbra, Portugal}
\address[ufccomp]{Department of Computer Science, Federal University of Cear\'a, Fortaleza, Brazil}

\fntext[emails]{\emph{E-mail addresses:} \texttt{\{julio, anasilva\}@mat.ufc.br} (J. Araujo and A. Silva),
\texttt{\{nogueira, ams\}@mat.uc.pt} (J. Nogueira and A. Salgueiro) and
\texttt{
campos
@lia.ufc.br} (V. Campos
).}

\fntext[t2]{Partially supported by CNPq Universal 401519/2016-3 and Produtividade 304576/2017-4, 304478/2018-0 and 313642/2018-4.}
\fntext[t4]{Partially supported by FUNCAP/CNPq/PRONEM PNE-0112-00061.01.00/16.}
\fntext[t5]{Partially supported by the Centre for Mathematics of the University of Coimbra - UIDB/00324/2020, funded by the Portuguese Government through FCT/MCTES.}


\begin{abstract}
    In this work, we introduce a new graph convexity, that we call Cycle Convexity, motivated by related notions in Knot Theory.
    
    For a graph $G=(V,E)$, define the interval function in the Cycle Convexity as $\icc(S) = S\cup \{v\in V(G)\mid \text{there is a cycle }C\text{ in }G\text{ such that } V(C)\setminus S=\{v\}\}$, for every $S\subseteq V(G)$. We say that $S\subseteq V(G)$ is convex if $\icc(S)=S$. The convex hull of $S\subseteq V(G)$, denoted by $\hull(S)$, is the inclusion-wise minimal convex set $S'$ such that $S\subseteq S'$. A set $S\subseteq V(G)$ is called a hull set if $\hull(S)=V(G)$. The hull number of $G$ in the cycle convexity, denoted by $\hncc(G)$, is the cardinality of a smallest hull set of $G$.
    
    We first present the motivation for introducing such convexity and the study of its related hull number. Then, we prove that: the hull number of a 4-regular planar graph is at most half of its vertices; computing the hull number of a planar graph is an $\NP$-complete problem; computing the hull humber of chordal graphs, $P_4$-sparse graphs and grids can be done in polynomial time. 
\end{abstract}

\begin{keyword}
	Knot Theory \sep Tunnel Number \sep Graph Convexity \sep Hull number \sep Planar graphs.
\end{keyword}
\end{frontmatter}


\section{Introduction}
\label{sec:intro}

In this work, we link two distinct areas of knowledge: Graph Convexity and Knot Theory. As it might interest researchers from different domains, we tried to present most of the needed notions in this section, but if the reader finds any gap, we refer to~\cite{BoMu08,GJ79,MT.book, BZ05}. 

In the sequel, we first shortly describe the notions we need from both areas, alongside the related works in the literature, before stating our results.

\subsection{Basics in Graph Theory}
\label{subsec:graphtheory}

A \emph{graph} $G$ is a pair $(V,E)$, where $V$ is any finite set called the \emph{vertex set of $G$}, and $E$ is a multi set of subsets of $V$ of size at most~2; it is called the \emph{edge set of $G$}. When $V$ and $E$ are not given, they are denoted by $V(G)$ and $E(G)$, respectively. The \emph{order} (resp. \emph{size}) of $G$ is $n(G)=|V(G)|$ (resp. $m(G)=|E(G)|$). If $\{u,v\}\in E$, we write simply $uv$. We say that $uv$ has \emph{multiplicity $k$} if $uv$ occurs $k$ times in $E$, and we write $\mu(uv) = k$. If $e=\{u\}\in E$, we call $e$ a \emph{loop}, and we say that $G$ is \emph{simple} if $G$ has no loops and every edge of $G$ has multiplicity one.

Here, we consider only loopless graphs. We shall see that this constraint is needed from the point of view of Graph Convexity and is a irrelevant constraint to our motivation in Knot Theory. 

Two graphs $H$ and $G$ are \emph{isomorphic} if there exist bijective functions $f:V(H)\to V(G)$ and $g:E(H)\to E(G)$ such that $e=uv\in E(H)$ if, and only if, $g(e)=f(u)f(v)\in E(G)$. It is well-known that isomorphism defines an equivalence relation in any family of graphs and that graphs in a same equivalence class have, intuitively, the same structure. All results here refer to \emph{unlabelled graphs}, i.e. the results hold to all graphs in a same equivalence class of the isomorphism relation (i.e. we just care how elements in the vertex set are joined by edges, but not what they are).
 
Given a graph $G$, the \emph{neighborhood of a vertex $u\in V(G)$} is the set $N(u) = \{v\in V(G)\mid uv\in E(G)\}$;  the \emph{neighborhood of a subset $X\subseteq V(G)$} is the set $N(X) = \bigcup_{v\in X}N(v)\setminus X$; the \emph{neighborhood of $u$ in $X\subseteq V(G)$} is the set $N_X(u) = N(u)\cap X$; and the \emph{neighborhood of a subset $X\subseteq V(G)$ in $X'\subseteq V(G)$} is the set $N_{X'}(X) = N(X)\cap X'$. The \emph{degree of $u$ in $G$} is denoted by $d_G(u)$ and equals $\sum_{v\in N(u)}\mu(uv)$ (we omit the subscript when $G$ is clear in the context). The \emph{minimum degree of $G$} is the minimum value over $d(u)$, $u\in V(G)$, and is denoted by $\delta(G)$. We say that $G$ is \emph{$k$-regular} if $d(u)=k$ for every $k\in V(G)$; this is also sometimes called \emph{$k$-valent}.

Given a subset $C\subseteq V(G)$, the \emph{subgraph of $G$ induced by $C$} is the graph $G[C]=(C,E_C)$, where $uv\in E_C$ if and only if $\{u,v\}\subseteq C$ and $uv\in E(G)$. If $S\subseteq V(G)$, then we define $G\setminus S = G[V(G)\setminus S]$. A graph $H$ is an \emph{induced subgraph} of $G$ if there is a set $S\subseteq V(G)$ such that $H$ is isomorphic to $G[S]$.

A $v_1,v_q$-\emph{walk} in $G$ (between $v_1$ and $v_q$) of \emph{length} $q-1$ is a sequence of vertices $(v_1,\cdots,v_q)$ such that either $q=1$ or $v_iv_{i+1}\in E(G)$, for every $i\in \{1,\cdots,q-1\}$ 
If $(v_1,\cdots,v_q)$ is a $v_1,v_q$-walk with no vertex repetitions, then it is a \emph{$v_1,v_q$-path}. If $(v_1,\cdots,v_q,v_1)$ is a closed walk with at least one edge such that the only vertex repetition in the sequence is $v_1$, then it is a \emph{cycle} of length $q$. A graph is said to be \emph{connected} is there exists a path between $u$ and $v$, for every pair of vertices $u,v\in V(G)$. Also, a \emph{component} of $G$ is a maximal connected (induced) subgraph of $G$. 

Given a graph $G$, a subset $S\subseteq V(G)$ is called a \emph{stable set} if there are no edges between two elements of $S$; and it is called a \emph{clique} if every pair of elements in $S$ are adjacent. The simple graph on $n$ vertices such that any two vertices are adjacent is the \emph{complete} graph, denoted by $K_n$.

A vertex $u\in V(G)$ is a \emph{cut-vertex} of $G$ if $G-u$ has more components than $G$. A graph $G$ is \emph{2-connected} if $G$ is connected and does not have any cut-vertex. A \emph{block of $G$} is a maximal 2-connected subgraph of $G$.  It is well-known that the block structure of a connected graph is that of a tree. In fact, the \emph{block-cutpoint tree} of a simple connected graph $G$ is the tree that has one vertex for each cut-vertex of $G$ and one vertex for each block of $G$, and two vertices $x$ and $y$ are joined by an edge in this tree if, and only if, $x$ represents a cut-vertex of $G$ that belongs to the vertex set of the block represented by vertex $y$. Thus, note that the leaves of this tree correspond to blocks of $G$ having exactly one cut-vertex. Such blocks are called \emph{leaf-blocks}.

The embedding of a graph $G$ in the Euclidean plane is a function assigning to distinct vertices in $V(G)$ distinct points in the plane, and assigning to each edge $uv\in E(G)$ a (polygonal simple) curve from the point associated to $u$ to the point associated to $v$.
A graph $G$ is \emph{planar} if $G$ can be embedded on the plane in a way that its edges intersect only at their endpoints, i.e. no two curves associate to the edges cross. The embedding of a planar graph without crossings is called a \emph{plane graph}, i.e. a graph whose vertices are points and edges are curves that do not cross. Given a plane graph $G$, each region of the plane minus $G$ is called a \emph{face of $G$}, and the unbounded face is called the \emph{outerface of $G$} (recall we consider $V(G)$ to be finite). The set of faces of $G$ is denoted by $F(G)$. The boundary of a face $f\in F(G)$ is the set of edges of $G$ bounding the region corresponding to $F$. We say that a face $f\in F(G)$ has \emph{degree $k$} if the length of the shortest closed walk in its boundary equals $k$. A face of degree $k$ is also called a \emph{$k$-face}.

The \emph{contraction} of an edge $e=uv\in E(G)$ of a graph $G$ results in a graph $G'$, denoted by $G'= G/e$, such that $G'$ is obtained by the removal of the edge $e$ and the merge of the vertices $u$ and $v$ into a single new vertex $w$ being incident to the same edges as $u$ and $v$ in $G$. A graph $H$ is a \emph{minor} of $G$ if $H$ is obtained from $G$ by edge or vertex deletions and edge contractions. It is well-known that planar graphs is a \emph{minor-closed} graph class, i.e. any minor of a planar graph is also planar.

A \emph{forest} is an acyclic graph. A \emph{tree} is a connected forest. A \emph{chordal graph} is a graph with no induced cycles of length greater than~3. A \emph{$P_4$-sparse graph} is a graph in which every subset of 5 vertices induces at most one path on~4 vertices. For positive integers $m$ and $n$, the \emph{$m\times n$ grid}, denoted by $G_{m,n}$ is the graph on vertices $\{(i,j)\mid 1\le i\le m, 1\le j\le n\}$ having as edge set:
\[\begin{array}{l}\{(i,j)(i,j+1)\mid i\in\{1,\ldots,m\}, j\in\{1,\ldots,n-1\}\}\cup\\
 \{(i,j)(i+1,j)\mid i\in\{1,\ldots,m-1\}, j\in\{1,\ldots,n\}\}.\end{array}\]

\subsection{Graph Convexities}
\label{subsec:graphconvexities}

A \emph{convexity space} is an ordered pair $(V,\mathcal{C})$, where $V$ is an arbitrary set and $\mathcal{C}$ is a family of subsets of $V$, called \emph{convex sets}, that satisfies:
\begin{itemize}
  \item[{\bf (C1)}] $\emptyset,V\in \mathcal{C}$; and
  \item[{\bf (C2)}] For all $\mathcal{C'}\subseteq \mathcal{C}$, we have $\bigcap \mathcal{C'}\in \mathcal{C}$.
  \item[{\bf (C3)}] Every nested union of convex sets is convex.
\end{itemize}
Given a subset $S\subseteq V$, the \emph{convex hull of $S$} (with respect to $(V,\mathcal{C})$) is the (inclusion-wise) minimum $C\in \mathcal{C}$ containing $S$ and we denoted it by $\hull(S)$. If $\hull(S) = V$, then we say that $S$ is a \emph{hull set}. 

In Graph Convexity, convexity spaces are defined over the vertex set of a given graph $G$. Thus, we always consider the set $V$ to be finite and non-empty. We mention that this makes Condition (C3) irrelevant, since any such union will have a larger set. 

Given a graph $G = (V,E)$ and a function $I: 2^V\to 2^V$, we say that a subset $S\subseteq V$ is \emph{$I$-closed} if $I(S) = S$. Moreover, for a subset $S\subseteq V$ let:
$$I^k(S) = \begin{cases}I(S), \text{ if }k=1;\\I(I^{k-1}(S)),\text{ otherwise.}\end{cases}$$

Define 
\[\icc(C) = C\cup \{v\in V(G)\mid \mbox{there exists a cycle in $G[C\cup\{v\}]$ containing $v$}\}.\]

\begin{proposition}
\label{prop:cycleConvexity}
 Let $G = (V,E)$ be a graph. Let $\mathcal{C}$ be the family of $\icc$-closed subsets of vertices of $G$. Then, $(V,\mathcal{C})$ is a convexity space.
\end{proposition}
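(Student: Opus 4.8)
The plan is to verify the three defining conditions of a convexity space for $\mathcal{C}$, the family of $\icc$-closed sets. Condition (C3) I would dismiss immediately, since the excerpt already observes that it is irrelevant when $V$ is finite (a nested union of convex sets is, in the finite setting, equal to its largest member). The whole argument then rests on a single monotonicity property of the interval function, which I would isolate first: if $A\subseteq B\subseteq V$, then $\icc(A)\subseteq\icc(B)$. Indeed, take $v\in\icc(A)$; if $v\in A$ then $v\in B\subseteq\icc(B)$, and otherwise there is a cycle in $G[A\cup\{v\}]$ through $v$, which, since $A\cup\{v\}\subseteq B\cup\{v\}$, is also a cycle in $G[B\cup\{v\}]$ through $v$, so again $v\in\icc(B)$.

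For (C1), I would first note that $\icc(V)=V$ is immediate, as $\icc$ only ever adds vertices of $V(G)$, whence $V\subseteq\icc(V)\subseteq V$. For $\icc(\emptyset)=\emptyset$ the point to flag is that $G[\{v\}]$ has no edges, because $G$ is loopless, and therefore contains no cycle; hence no $v$ is added and $\icc(\emptyset)=\emptyset$. This is precisely where the loopless hypothesis is used, as announced in Subsection~\ref{subsec:graphtheory}: a loop at $v$ would make $\{v\}$ induce a cycle, and then $\emptyset$ would fail to be convex.

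For (C2), let $\mathcal{C}'\subseteq\mathcal{C}$ and set $C=\bigcap\mathcal{C}'$ (with the usual convention $\bigcap\emptyset=V$, which is covered by (C1)). Since always $C\subseteq\icc(C)$, it suffices to prove $\icc(C)\subseteq C$. Given $v\in\icc(C)$ and an arbitrary $C'\in\mathcal{C}'$, monotonicity together with $C\subseteq C'$ yields $v\in\icc(C)\subseteq\icc(C')=C'$, where the last equality holds because $C'$ is $\icc$-closed. As this holds for every $C'\in\mathcal{C}'$, we obtain $v\in\bigcap\mathcal{C}'=C$, so $\icc(C)=C$ and $C\in\mathcal{C}$.

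I do not anticipate any genuine obstacle here; the content of the proposition is essentially the observation that $\icc$ is monotone, combined with the loopless convention. The only subtleties worth stating explicitly are the role of looplessness in establishing $\icc(\emptyset)=\emptyset$ and the empty-intersection convention $\bigcap\emptyset=V$, both of which I would address directly rather than leave implicit.
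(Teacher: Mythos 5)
Your proof is correct and follows essentially the same route as the paper's: the paper also dispatches (C1) via looplessness and (C3) via finiteness, and its argument for (C2) — a cycle witnessing $w\in\icc(C)\setminus C$ also witnesses $w\in\icc(C_1)$ for any $C_1\supseteq C$ in the family — is exactly your monotonicity observation, merely phrased as a proof by contradiction rather than isolated as a lemma. No gaps.
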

\begin{proof}
Since $G$ is loopless, note that $\icc(\emptyset) = \emptyset$. By definition, it is clear that $\icc(V) = V$. Since $V$ is finite, it suffices then to only check condition (C2).

By contradiction, let $\mathcal{C'}\subseteq \mathcal{C}$ be such that $\bigcap\mathcal{C'}\notin \mathcal{C}$; denote $\bigcap\mathcal{C'}$ by $C$. From the definition of $\mathcal{C}$ and of $\icc$-closed sets, we get that $\icc(C)\neq C$. Because $C\subseteq\icc(C)$, it means that there exists $w\in \icc(C)\setminus C$, and thus there is a cycle $X$ in $G[C\cup \{w\}]$ containing $w$. Note that all vertices of $X$ except $w$ belong to $C$.

Now, write $\mathcal{C'}$ as $\{C_1,\ldots, C_p\}$, for some positive integer $p$ (recall that $\mathcal{C}$ is finite), and suppose, without loss of generality, that $w\notin C_1$. Since all vertices of $X$, but $w$, belong to $C_1$, we have that $w\in\icc(C_1)$. This contradicts the hypothesis that $C_1\in \mathcal{C'}\subseteq \mathcal{C}$, as $C_1\neq \icc(C_1)$.
\end{proof}
We emphasize that the loopless constraint in $G$ is important to ensure that the empty set is $\icc$-closed. In view of Proposition~\ref{prop:cycleConvexity}, we say that the family $\mathcal{C}$ of $\icc$-closed subsets of a given loopless graph $G$ is the \emph{cycle convexity} over $V(G)$ and that $(V,\mathcal{C})$ is the \emph{cycle convexity space} over the graph $G$. We often refer to an $\icc$-closed subset $S$ as \emph{convex} in the cycle convexity and we say that $V(G)\setminus S$ is \emph{co-convex}.

As already mentioned before in a more general context, a subset $S$ is a \emph{hull set} in the cycle convexity if $\hull(S) = V(G)$. The \emph{hull number} in the cycle convexity, denoted by $\hncc(G)$, is the cardinality of a smallest hull set of a graph $G$ in the cycle convexity. This is the parameter we study in this work.

One should notice that, given a graph $G = (V,E)$ and a subset $S\subseteq V(G)$, we have that $\hull(S) = \iccp{k}(S)$, for a sufficiently large $k$ (such as $k=n(G)$, for instance). Consequently, $\hull(S)$ can be obtained by iteratively adding vertices in $\iccp{k}(G)\setminus\iccp{k-1}(G)$, for $k\in\{1,\ldots, n(G)\}$. Then, we say that a vertex $v$ of $G$ is \emph{contaminated} (some authors also use \emph{percolated} or \emph{generated}) at step $k=0$ if $v\in S$, otherwise at step $k\ge 1$ where $k$ is the minimum positive integer such that $v\in\iccp{k}(S)\setminus\iccp{k-1}(S)$. In case $S$ is not a hull set of $G$, for some vertices that do not belong to $S$ such minimum $k$ will not exist and we say that they are not contaminated by $S$.

In order to relate our results to Knot Theory, let us also define a closely related parameter.
For a plane graph $G$, let $V(F)$ denote the vertices that are incident to the face $F$ of $G$. We also define the \emph{face convexity} over a plane graph $G$ by using the function $\ifc:2^{V(G)}\rightarrow 2^{V(G)}$, defined as follows:
$$\ifc(C) = C\cup \{v\in V(G)\mid \text{there is a face in } G[C\cup\{v\}]\text{ containing } v\}.$$

One can similarly define the $\ifc$-closed closed subsets of vertices of a given plane graph $G$ and prove that the family of $\ifc$-closed subsets define a convexity over $V(G)$ - such convexity we call \emph{face convexity}. We also similarly define the \emph{hull number} in the face convexity and denote it by $\hnfc(G)$. By definition, note that $\hncc(G)\leq\hnfc(G)$, for any plane graph $G$.

For shortness we refer to  $\hncc(G)$ simply as the \emph{hull number of $G$} and to $\hnfc(G)$ as the \emph{face hull number of $G$}.  Similarly, by hull set, we mean the hull set on the cycle convexity, and by \emph{face hull set}, a hull set in the face convexity.
 

As we shall see in the next section, the definition of such functions leading to graph convexities is motivated by an invariant of knots and links called the tunnel number. Before going into these details in Knot Theory, let us first refer to previous works in the literature concerning Graph Convexity.

It is important to emphasize that there are several similar functions defined over the vertices of a graph in the literature, leading to several graph convexities, e.g.: geodetic~\cite{FJ.86}, $P_3$~\cite{PWW.08}, $P_3^*$~\cite{ASSS.18}, monophonic~\cite{D.88}. 
The graph convexities we should emphasize are the $P_3$ and $P_3^*$. Their related functions are, respectively: $\ip(C) = C\cup \{u\in V(G)\mid u \text{ has two neighbors in $C$}\}$ and $\ips(C) = C\cup \{u\in V(G)\mid u \text{ has two \emph{non-adjacent} neighbors in $C$}\}$. The related similar hull numbers are also defined and denoted by $\hnp(G)$ and $\hnps(G)$.

We should mention that the $P_3$-convexity is also closely related to the notion of \emph{percolation}~\cite{BR06}; this concept has many applications in physics and they are usually interested in probabilistic results, e.g., the probability $p$ such that if the vertices are initially chosen with probability $p$, then the chosen set is a hull set in the $P_3$-convexity, i.e. it percolates the entire graph.

Observe that $\hnp(G)=\hnps(G)$ if $G$ has no triangles. Also, because a hull set in the $P_3^*$-convexity and in the cycle convexity is also a hull set in the $P_3$-convexity, we get that $\hnp(G)\le \hnps(G)$ and $\hnp(G)\le \hncc(G)$. Moreover, unless $G$ is triangle-free, one cannot ensure a relationship between $\hnps(G)$ and $\hncc(G)$. In Subsection~\ref{sec:ourResults}, when presenting our results, we will comment about some known results on these convexities.

\subsection{Knot Theory}

In the sequel, we present basic notions of Knot Theory and describe a problem in this area which is related to the hull number (with respect to the interval functions of the previous section) of planar graphs arising from knots and link diagrams. Throughout this paper we work in the smooth category of manifolds. 

\subsubsection*{Basics in knot theory}

A \textit{knot} is an embedding of a circle in the 3-dimensional sphere $S^3$. If $f:S^1\rightarrow S^3$ is such an embedding, we think of the knot as the subset $K\subset S^3$ given by $K=f(S^1)$. A finite collection of disjoint knots in $S^3$ is called a  \textit{link}. Hence, knots are single component links.  Two knots or links $K_1, K_2$ are said to be \textit{equivalent} if they are \textit{ambient isotopic}.
An ambient isotopy $\phi:S^3\times [0,1]\rightarrow S^3$ is a continous map such that  each $\phi_t=\phi|_{S^3\times\{t\}}:S^3\equiv S^3\times\{t\}\rightarrow S^3$ is a homeomorphism and $\phi_0=\text{id}_{S^3}$. To  say $K_1$ and $K_2$ are equivalent means that such a $\phi$ exists, satisfying $\phi_1(K_1)=K_2$.  

A \textit{polygonal knot} is a knot whose image is the union of a finite set of line segments.  We shall only consider knots which are equivalent to  polygonal knots. These are called \textit{tame knots}.  

Let  $K\subset S^3$ be a knot or link. A useful way to visualize $K$ is to consider its projection on a plane.   It is a well known fact that, generically, such a projection is one-to-one, except at a finite number of double points where the projection crosses itself transversely. These projections correspond to finite graphs where all vertices are 4-regular. From these graphs one builds  \textit{diagrams} for $K$ as  follows: at each vertex we distinguish between the over-strand and the under-strand of $K$ by creating a break in the strand going underneath. See Figure \ref{diagram}.    
Such a diagram is denoted by $D(K)$ and the corresponding 4-regular graph is denoted by $G_{D(K)}$. 

\begin{figure}[!ht]
	\centering
	\includegraphics[scale=0.15]{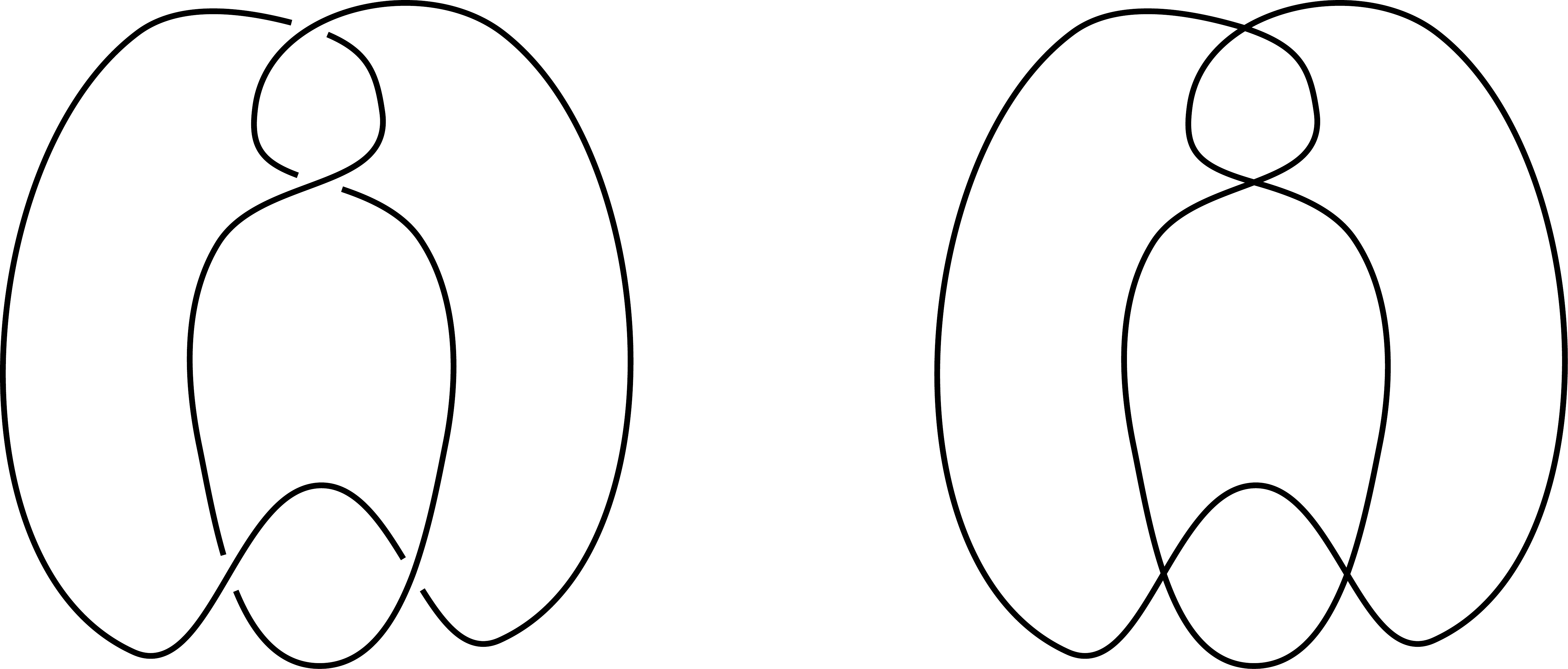}
	\caption{Left: a diagram of a knot; Right: the corresponding 4-regular graph.}
	\label{diagram}
\end{figure}

We remark that the correspondence between the graphs and diagrams is not one-to-one. The same graph may be associated to several distinct diagrams. For example, it can be easily proved that any finite connected 4-regular graph is associated to a diagram of the trivial knot (the knot which bounds a disk embedded in $S^3$).    

\subsubsection*{Tunnel number}

Given a knot or link $K$ in $S^3$, a \textit{tunnel} for $K$ is a properly embedded arc in the exterior $E(K)$ of $K$. Given a collection of disjoint tunnels $\mathcal{T}=\{t_1,...,t_m\}$ for $K$, we may think of $K\cup\mathcal{T}$ as a spatial graph. The collection $\mathcal{T}$ is called an \textit{unknotting tunnel system} for $K$ if a regular neighborhood of the graph $K\cup\mathcal{T}$ can be isotoped into a regular neighborhood of a plane graph in $S^3$. Every link $K$ in $S^3$ admits an unknotting tunnel system. In fact, if we add one vertical tunnel at each crossing of a diagram $D(K)$, we obtain an unknotting tunnel system for $K$. The \textit{tunnel number} of $K$ is the minimal cardinality among all unknotting tunnel systems for $K$ and we denote it by $t(K)$.

The tunnel number is also interpreted in the setting of Heegaard genus $g(M)$ of a 3-manifold $M$, which is the minimal genus of a surface splitting the 3-manifold into two compression bodies. (See for instance \cite{M07}.) Note that the definition of an unknotting tunnel system $\mathcal{T}=\{t_1,...,t_m\}$ is equivalent to say that the boundary of a regular neighborhood of $K\cup\mathcal{T}$, which is a surface of genus $m+1$, splits $E(K)$ into a handlebody and a compression body. We then have that $g(E(K))=t(K)+1$. The computation of the Heegaard genus of a $3$-manifold is very difficult to compute in general. In fact, by \cite{BDS17}, this problem is NP-hard.


In some situations the tunnel number can be computed exactly. For instance, if one single tunnel defines an unknotting tunnel system for a non-trivial knot, then the knot has tunnel number one. Also, it is known that tunnel number one knots are prime. So, if a composite knot has an unknotting tunnel system with two tunnels, then the tunnel number of the knot is 2. But determining if prime knots have tunnel number 2 or higher is a difficult task. In general, one works with upper bounds or lower bounds for the tunnel number. For instance, as observed above, the crossing number is an upper bound for the tunnel number, and, from the Heegaard splitting of the knot exterior, the rank of the fundamental group of the knot exterior is a lower bound for the tunnel number plus one. 
For links the tunnels necessarily have to connect the link components. Hence, its tunnel number is at least the number of components minus one. This lower bound can be used to determine the tunnel number of a link in case one finds an unknotting tunnel system with that cardinality, as in \cite{Lichtenstein82}. Also in \cite{GNS17} the authors determine the tunnel number of a class of links by exploring the following upper bound: given a diagram $D(K)$ of $K$, consider vertical arcs added at certain crossings (see figure \ref{vertical_arc} below). What is the smallest unknotting tunnel system consisting of vertical arcs only? One of the  results in \cite{GNS17} shows that this is at most the hull number associated to the interval function $\ifc$ of the graph $G_{D(K)}$.  We conjecture that the tunnel number is also bounded above by $\hncc$. In fact, we developed a computer program to compute the hull number and the face hull number of every prime knot up to 12 crossings, whose results are described in Table \ref{table:knots}. For every one of these 2977 knot diagrams $D(K)$,  we obtained $\hncc(G_{D(K)})=\hnfc(G_{D(K)})$.
	
	 \begin{table}
		\begin{center}
			\begin{tabular}{|c|c|c|c|c|c|c|c|c|c|c|} 
				\hline
				{\bf Crossing number} & 3 & 4 & 5 & 6 & 7 & 8 & 9 & 10 & 11 & 12\\\hline
				$\hncc(G)=\hnfc(G)=1$ & 1 & 1 & 2 &3 &7&12&24&45&91&176\\\hline
				$\hncc(G)=\hnfc(G)=2$ & 0 & 0 & 0 &0&0&9&25&120&446&1952\\\hline
				$\hncc(G)=\hnfc(G)=3$ & 0 & 0 & 0 & 0&0&0&0&0&15&48\\\hline
				total & 1 & 1 & 2 & 3&7&21&49&165&552&2176\\\hline
			\end{tabular}
			\caption{The number of prime knots $K$ by crossing number and hull number of $G=G_{D(K)}$.}\label{table:knots}
		\end{center}
	\end{table}

Hence, finding  good upper bounds for the hull number of $G_{D(K)}$ would help us estimate $t(K)$.

\begin{figure}[!ht]
	\centering
	\includegraphics[scale=0.1]{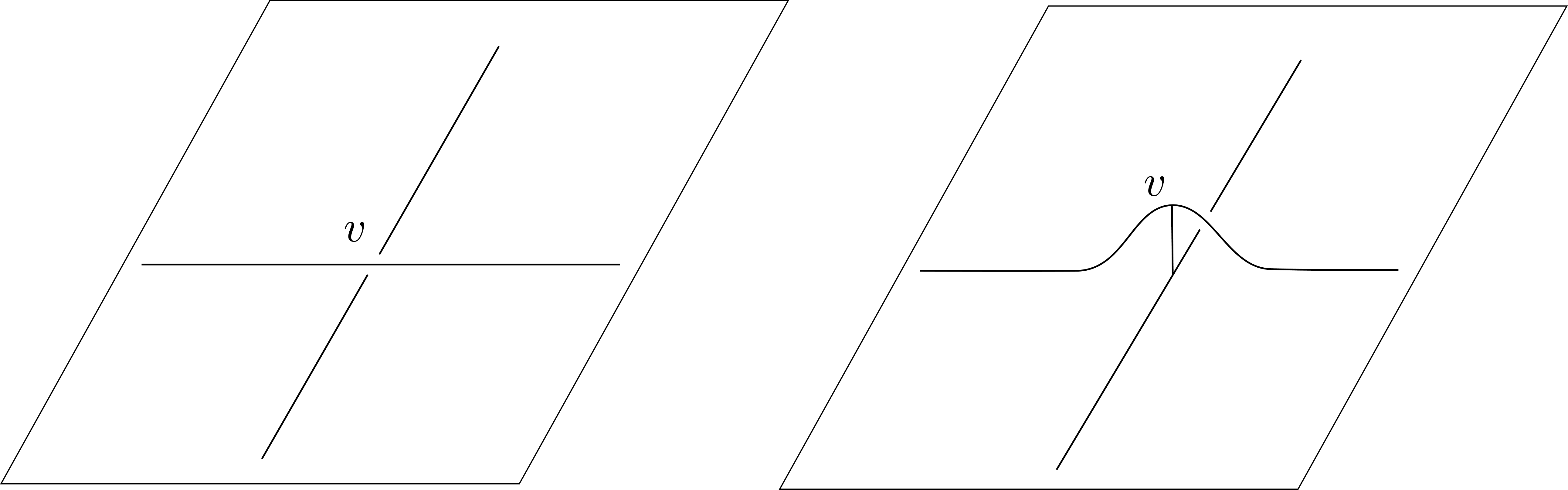}
	\caption{Adding a vertical arc at a crossing $v$ of the diagram.}
	\label{vertical_arc}
\end{figure}

\subsection{Relationship between hull number and tunnel number}

The result of this subsection have appeared, in a more general setting, in \cite{GNS17}. We include it here for completeness. 

\begin{remark}\label{relationship}   Suppose  one starts to add vertical arcs and, at some point,  there is a face  $f$ corresponding to one of the planar regions determined by the diagram $D(K)$ such that  all, except one, of the crossings of $f$ has a vertical arc.  Let $v$ represent this crossing. Then the crossing $v$ of $f$  may be removed. Let $K_1$ be the resulting 1-complex.  This is described in Figure \ref{figure:removal}.  
\begin{figure}[h]
	\centering
	\includegraphics[scale=0.13]{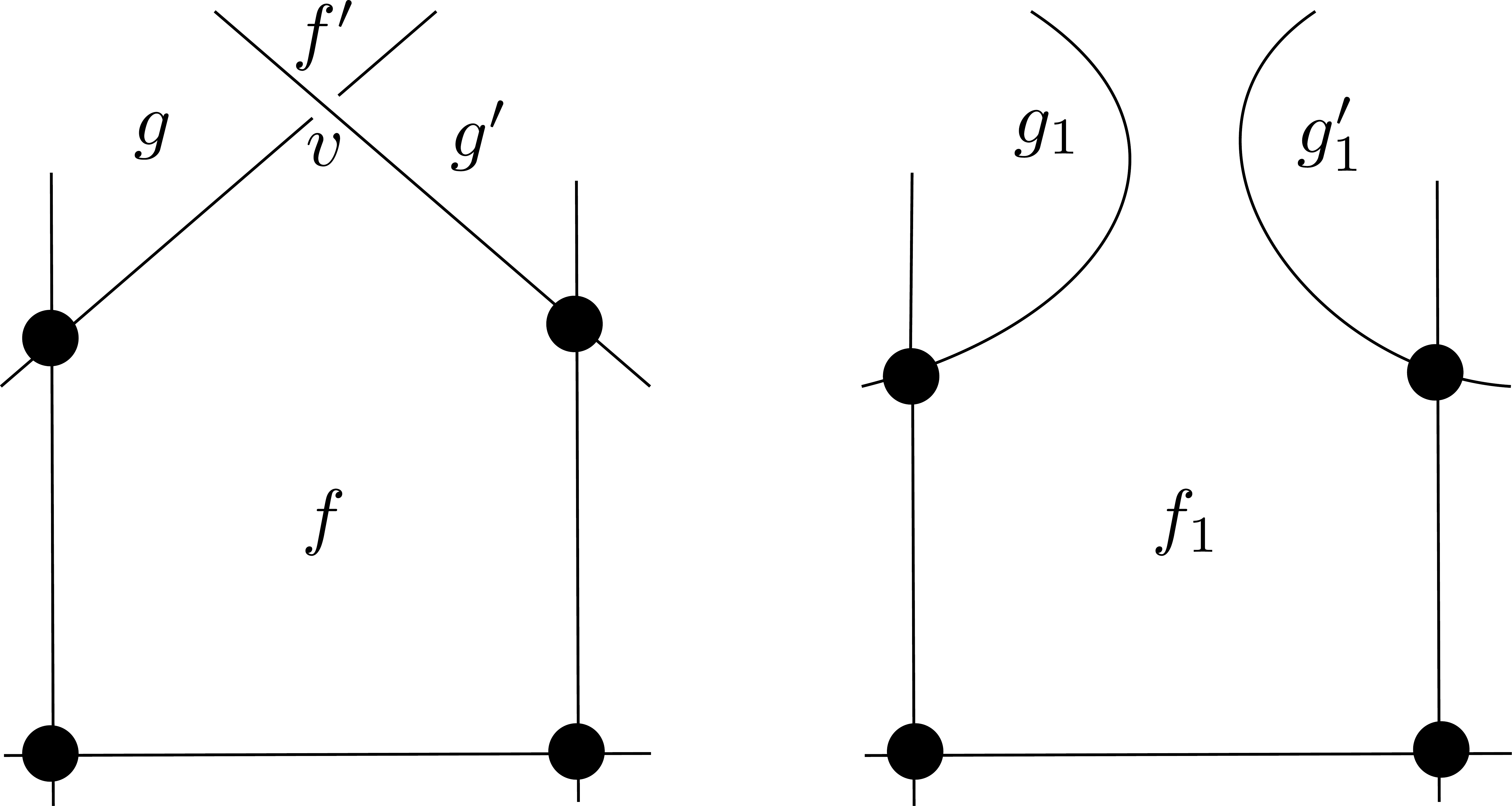}
	\caption{Removal of vertex}
	\label{figure:removal}
\end{figure}

In the diagram $D(K)$, let $f'$ be the face containing $v$, opposite to $f$. After $v$ is removed, as above, the faces $f$ and $f'$ merge into a single face $f_1$ of $D(K_1)$.
If necessary, we may keep adding vertical arcs. Whenever this procedure yields a face in which all crossings, except one, have a vertical arc, then the remaining crossing can be removed. 
\end{remark}

In what follows,  we restrict to minimal crossing diagrams of knots and links. Consider the 4-regular  graph $G_{D(K)}$ given by a diagram $D(K)$ of a link $K$, i.e., just ignore over/under information on $D(K)$.  

\begin{theorem}\label{upper bound}
$t(K)\leq\hnfc(G_{D(K)})$.
\end{theorem}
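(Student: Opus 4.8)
The plan is to show that the face hull number of $G_{D(K)}$ gives an upper bound for the tunnel number by exhibiting an explicit unknotting tunnel system of that cardinality. The key observation is that a face hull set $S$ of $G_{D(K)}$ tells us exactly where to place vertical arcs: we add a vertical arc at each crossing corresponding to a vertex in $S$. I will then argue that this collection of vertical arcs is an unknotting tunnel system, so that $t(K)$ is at most $|S|$; taking $S$ to be a minimum face hull set yields $t(K)\leq\hnfc(G_{D(K)})$.

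First I would recall the content of Remark~\ref{relationship}: whenever a face of the (partially modified) diagram has all but one of its crossings equipped with a vertical arc, the remaining crossing can be removed, merging that face with the opposite face across the removed crossing. The central idea is that this removal operation on the diagram corresponds precisely to one application of the face interval function $\ifc$ on the graph: if $C$ is the set of vertices whose crossings currently carry a vertical arc (together with already-removed crossings), then a vertex $v$ can be removed exactly when there is a face of $G[C\cup\{v\}]$ containing $v$, i.e. when $v\in\ifc(C)\setminus C$. Thus contamination in the face convexity models the cascade of crossing removals enabled by the initially placed vertical arcs.

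Next I would set $S$ to be a minimum face hull set, place vertical arcs at the crossings of $S$, and run the removal procedure. Since $\hull(S)=V(G_{D(K)})$ in the face convexity, iterating $\ifc$ starting from $S$ eventually contaminates every vertex; translating back through the correspondence above, this means every crossing is eventually removed by the repeated application of Remark~\ref{relationship}. Once all crossings have been removed, the resulting $1$-complex has no crossings, so a regular neighborhood of $K$ together with the added vertical arcs is isotopic to a regular neighborhood of a plane graph in $S^3$. By definition this makes $\{t_v\mid v\in S\}$ an unknotting tunnel system for $K$, of cardinality $|S|=\hnfc(G_{D(K)})$, giving $t(K)\leq\hnfc(G_{D(K)})$.

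The main obstacle I anticipate is the careful verification of the geometric step: namely that removing a crossing as in Remark~\ref{relationship} genuinely realizes an isotopy of the spatial graph $K\cup\mathcal{T}$, and that at the end, when no crossings remain, the $1$-complex can indeed be isotoped into a regular neighborhood of a plane graph. One must check that the vertical arcs, as tunnels, interact correctly with the over/under strands at each removal and that the merging of faces does not introduce obstructions. Since this geometric fact is exactly what Remark~\ref{relationship} and the referenced construction in \cite{GNS17} provide, the combinatorial bookkeeping—matching the sequence of crossing removals to the contamination order of $\ifc$ and confirming that exhausting the face hull closure empties the diagram—is the part that requires the most care, while the topological content is inherited from the earlier remark.
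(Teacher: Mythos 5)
Your proposal is correct and follows essentially the same route as the paper: place vertical arcs at the crossings of a minimum face hull set, use Remark~\ref{relationship} to remove crossings one at a time, and match the removal cascade to the contamination order of $\ifc$ so that exhausting the hull closure leaves a crossing-free $1$-complex. The ``combinatorial bookkeeping'' you flag as the delicate step is exactly what the paper isolates as Lemma~\ref{lem:tunnel} (percolation is preserved when a crossing is removed and its two faces merge), so your outline matches the published argument.
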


We thus wish to find good upper bounds for $t(K)$ in terms of the \textit{crossing number} of $K$. The proof of this theorem relies on   Remark \ref{relationship}.

Let $V_0$ be a subset of vertices in $D(K)$ in which vertical arcs have been added. Let $f$ be a face of $D(K)$ and suppose that vertical arcs have been added in all, except one, vertices of $f$.  In the link $K$, let $v$ represent the remaining crossing of $f$, as in the remark. Consider the 1-complex $K_1$ obtained by removing this crossing, according to the remark, and $D(K_1)$ the planar graph associated to $K_1$.  

\begin{lemma}  Percolation in the graph  $D(K_1)$ is equivalent to the percolation  in $D(K)$. \label{lem:tunnel}
\end{lemma}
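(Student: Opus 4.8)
The plan is to read ``percolation'' as the contamination process of the face convexity, which is the setting relevant to Theorem~\ref{upper bound}: a seed set \emph{percolates} a plane graph when its convex hull (under $\ifc$) is the whole vertex set. The goal is then to prove that $V_0$ is a face hull set of $D(K)$ if and only if it is a face hull set of $D(K_1)$ (the cycle-convexity statement for $\icc$ is entirely analogous). The starting observation is cheap but essential: by hypothesis the face $f$ has all its vertices in $V_0$ except $v$, and since the boundary of $f$ is a cycle all of whose vertices lie in $V_0\cup\{v\}$, we have $v\in\ifc(V_0)$, so $v$ is contaminated at the very first step of the process on $D(K)$. Note also that $V_0\subseteq V(D(K_1))=V(D(K))\setminus\{v\}$, so $V_0$ is a legitimate seed set in both graphs.

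First I would record the face correspondence produced by the vertex removal of Remark~\ref{relationship}. Writing $a,b,c,d$ for the four edges around $v$ in cyclic order, with $f$ the corner between $a,b$ and $f'$ the opposite corner between $c,d$, the reconnection yielding $D(K_1)$ splices $a$ with $d$ and $b$ with $c$. Consequently every face of $D(K)$ not incident to $v$ survives unchanged in $D(K_1)$; the two \emph{side} faces (corners $b,c$ and $d,a$) survive with the spliced edge inserted and with $v$ deleted from their vertex sets; and $f,f'$ merge into a single face $f_1$ with $V(f_1)=(V(f)\cup V(f'))\setminus\{v\}$, exactly the merge asserted in Remark~\ref{relationship}. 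I would note the mild standing assumption (valid for the minimal, $4$-regular plane diagrams considered here) that each relevant face is bounded by a cycle, so that faces of induced subgraphs behave as expected.

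The heart of the argument is a \textbf{one-step equivalence}: for every $C$ with $V_0\cup\{v\}\subseteq C\subseteq V(D(K))$ and every $u\neq v$, a vertex $u\in\ifc(C)$ computed in $D(K)$ if and only if $u\in\ifc(C\setminus\{v\})$ computed in $D(K_1)$. For faces avoiding $v$, and for the two side faces (where deleting $v$ from the vertex set is harmless because $v\in C$), this is immediate. The delicate case, which is also \emph{the main obstacle}, is the merged face $f_1$: to contaminate some $u\in V(f')$ through $f_1$ one needs $V(f_1)\setminus\{u\}\subseteq C\setminus\{v\}$, and $f_1$ is strictly larger than $f'$, so a naive correspondence fails because $f_1$ is harder to fill than $f'$ was. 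This is precisely where the hypothesis on $f$ is used decisively: the extra vertices of $f_1$ are exactly $V(f)\setminus\{v\}\subseteq V_0\subseteq C$, so they never obstruct, and the requirement collapses to $V(f')\setminus\{u\}\subseteq C$, which is exactly the condition for $f'$ to contaminate $u$ in $D(K)$. A symmetric check shows $f$ itself never yields a \emph{new} contamination, since every vertex of $f$ other than $v$ already lies in $V_0$.

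Finally I would assemble these into the equivalence by induction on the contamination step. Starting from $C_0=V_0\cup\{v\}$ in $D(K)$ and $C_0'=V_0$ in $D(K_1)$, the one-step claim gives $\ifc(C_0)=\ifc(C_0')\cup\{v\}$ (the first computed in $D(K)$, the second in $D(K_1)$), and iterating yields $\ifc^{k}(C_0)\setminus\{v\}=\ifc^{k}(C_0')$ for every $k$, because the sets only grow and always contain $v$, so the one-step claim applies at each stage. Since $v$ is contaminated at step $1$ in $D(K)$, we have $\hull_{D(K)}(V_0)=\hull_{D(K)}(C_0)$, and passing to the fixed points of the two processes gives $\hull_{D(K)}(V_0)\setminus\{v\}=\hull_{D(K_1)}(V_0)$. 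Hence $\hull_{D(K)}(V_0)=V(D(K))$ exactly when $\hull_{D(K_1)}(V_0)=V(D(K_1))$, which is the asserted equivalence. The whole argument thus rests on neutralizing the enlarged face $f_1$ by exploiting that $f$ is already saturated by the seed set $V_0$.
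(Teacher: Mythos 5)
Your proposal is correct and follows essentially the same route as the paper: identify the face correspondence under removal of $v$ (unaffected faces, the two side faces, and the merged face $f_1$), and observe that the uncontaminated vertices of $f_1$ are exactly those of $f'$ because $V(f)\setminus\{v\}\subseteq V_0$, so the percolation processes coincide. The paper states this in three sentences and leaves the step-by-step induction implicit; your explicit one-step equivalence and induction on contamination steps just fill in that bookkeeping.
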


\begin{proof} 
The vertex (crossing) $v$ is  removed from $D(K)$. 
Let $f_1$ be the face of $D(K_1)$  obtained  by merging the faces $f$ and $f'$.  Then the vertices of $f_1$ that do not have a vertical arc  are exactly the vertices of $f'$ that that do not have vertical arcs.  Moreover,  let $g, g'$ be the faces of $D(K)$ adjacent to $v$ other than $f,f'$. These yield two faces $g_1, g_1'$ in $D(K_1)$, respectively. As before, the vertices of $g_1, g_1'$ that do not have a vertical arc are exactly those vertices of $g, g'$ that do not have a vertical arc.  
\end{proof}

\begin{proof}[Proof of theorem \ref{upper bound}]

The theorem will be proved by showing $\hnfc(D(K))$ is an upper bound for the number of vertical arcs needed to make the resulting 1-complex planar. Assume $V_0$ is a hull set for $D(K)$, i.e.,  coloring all vertices in $V_0$, then it percolates to the whole graph $D(K)$.  We  relate  the percolation rule to  Remark \ref{relationship}. 

Place one vertical arc at each crossing corresponding to the vertices of $V_0$. Let $v$ be the first crossing to be removed in $D(K)$.  Then it belongs to a face $f$ in which all other vertices were previously colored. By Remark \ref{relationship}, the crossing corresponding to $v$ may be removed. Let $D(K_1)$ be the graph obtained by removing the crossing $v$.   By Lemma~\ref{lem:tunnel}, percolation in $D(K)$ is equivalent to percolation in $D(K_1)$. Then $V_0$ is a hull set for $D(K_1)$ also. Inductively, repeat  this process to the remaining  vertices (crossings). We end up with a diagram $D'$ of  1-complex  $K'$ in which there are no crossings, i.e.,   $K'$ is planar.

\end{proof}

\subsection{Our Results}\label{sec:ourResults}

The main initial motivation for this work has been to find good upper bounds for the hull number of 4-regular planar graphs, which are loopless but not necessarily simple, in order to obtain better bounds for the tunnel number of links. At the beginning of our studies, we believed that a version of Theorem~\ref{upper bound} would also hold for $\hncc(G)$. However, we have not been able to prove such a theorem and leave the following open question, whose positive answer would imply the desired result:

\begin{question}
Does $\hnfc(G)\le \hncc(G)$ hold, for a 4-regular planar graph $G$?
\end{question}




In Section~\ref{sec:upperbound}, we show that the $\hncc(G)\leq \frac{n(G)}{2}$, whenever $G$ is a 4-regular graph. Moreover, this is a tight bound.

It is worth mentioning that we have not been able to find a \emph{simple} 4-regular planar graph attaining such bound. In fact, we believe that this could be improved to a third if $G$ is a simple graph; hence we ask:

\begin{question}
What is the minimum $c$ such that $\hncc(G)\le c\cdot n$, for all simple 4-regular planar graphs $G$ on $n$ vertices?
\end{question}

It is also worth mentioning that, in general, planar graphs may need as many as all the vertices to be contaminated. In fact, in Section~\ref{sec:preliminaries}, besides prooving some properties about co-convex sets that we use in several results, we show that $\hncc(G)=n(G)$ if, and only if $G$ is a forest. We also show that if $G$ is a chordal graph with $p$ blocks, then $\hncc(G) = p+1$, and that $\hncc(G_{m\times n}) = m+n-1$.

Apart from the topological interest, the new graph convexity has an interest of its own. We recall the connection of our graph convexity to other previously known ones: the $P_3$ and $P_3^*$ convexities.
Concerning the $P_3$-convexity, in~\cite{CDPRS.11} the authors prove that computing the hull number is $\NP$-complete in general, and they give polynomial results for chordal graphs and cographs. In~\cite{ASSS.18}, the authors prove that computing the hull number in the $P_3$-convexity remains $\NP$-hard even when restricted to subgraphs of the grid. It implies that it is $\NP$-hard for bipartite planar graphs with bounded maximum degree. They also give a polynomial-time algorithm to compute $\hnp(G)$, for any $P_4$-sparse graph $G$. In addition, they introduce and investigate the $P_3^*$-convexity. Since in any triangle-free graph $G$ we have that $\hnp(G)=\hnps(G)$, they deduce from the previous result that computing $\hnps(G)$ is also $\NP$-hard whenever $G$ is a planar bipartite bounded-degree graph. They also present a polynomial-time algorithm to compute $\hnps(G)$, for any $P_4$-sparse graph $G$. Furthermore, in~\cite{DPR2019}, the authors show the $\NP$-hardness of computing $\hnps(G)$, for chordal graphs $G$.

A natural question is whether any of these results also apply to our convexity. In~Section~\ref{sec:npcomplete}, we prove that computing $\hncc(G)$ is $\NP$-complete even when $G$ is a planar graph, and in Section~\ref{sec:poly} we give a polynomial-time algorithm to compute the hull number in the cycle convexity of $P_4$-sparse graphs. Note that the previously mentioned equations that present in Section~\ref{sec:preliminaries} imply linear-time algorithms to compute the hull number of chordal graphs and grids. Table~\ref{table:results} summarizes the known complexity results about the computation of hull number in the graph classes investigated so far in the cycle convexity. 
 
\begin{table}
\begin{center}
\begin{tabular}{|c|c|c|c|} 
\hline
{\bf Graph class} & $\mathbf{P_3}$ & $\mathbf{P^*_3}$ & {\bf cycle conv.} \\\hline
chordal & $\P$ & \NP-complete & $\P$\\\hline
$P_4$-sparse & $\P$ & $\P$ & $\P$\\\hline
bipartite & $\NP$-complete & $\NP$-complete & ? \\\hline
planar & $\NP$-complete & $\NP$-complete & \NP-complete\\\hline
\end{tabular}
\caption{The complexity of computing the hull number of $G$ in the given convexities. In the table, $\P$ stands for polynomial, $\NP$ stands for $\NP$-complete and ``?'' stands for open.}\label{table:results}
\end{center}
\end{table}

In particular, our $\NP$-completeness proof does not bound the maximum degree, which means that we do not know what is the complexity of computing $\hncc(G)$ when $G$ is a 4-regular planar graph, our main class of interest. We therefore pose the following question:

 \begin{question}
Let $G$ be a 4-regular planar graph. Can one compute $\hncc(G)$ in polynomial time?
 \end{question}

We mention that there is a work that has been prepared after this one, but published first, dealing with the parameter \emph{interval number} in the Cycle Convexity~\cite{ADNS2018}. An \emph{interval set} $S\subseteq V(G)$ is any subset that satisfies $\icc(S)=V(G)$. The \emph{interval number} of a graph $G$, denoted by $\incc(G)$, is the minimum size of an interval set of $G$. They present several results on this parameter, giving bounds and as well as computational complexity results.



\section{Preliminaries}\label{sec:preliminaries}

In this section, we present some general simple results about the hull number in the cycle convexity. We also use such results to determine the exact value of $\hncc(G)$, whenever $G$ is a simple chordal graph or a grid.

When dealing with the parameter hull number (in any graph convexity), the study of co-convex sets is extremely important as they determine the value of the hull number. The proposition below follows from the definitions.

\begin{proposition}
\label{prop:co-convex}
Let $G$ be a graph and let $S,C\subseteq V(G)$ be such that $C\cap S=\emptyset$. If $S$ is a co-convex set, then $C$ is not a hull set of $G$.
\end{proposition}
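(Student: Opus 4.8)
The plan is to translate co-convexity of $S$ into convexity of its complement and then exploit the minimality of the convex hull. Set $A=V(G)\setminus S$. By the definition of a co-convex set, $A$ is convex, i.e. $\icc(A)=A$, and consequently $\hull(A)=A$. The whole argument will then amount to showing that the hull of $C$ cannot escape $A$, which is a proper subset of $V(G)$.

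The first step is to record the containment $C\subseteq A$: since $C\cap S=\emptyset$, every vertex of $C$ lies in $V(G)\setminus S=A$. The second step invokes the monotonicity of the hull operator, which is guaranteed by the convexity space $(V,\mathcal{C})$ established in Proposition~\ref{prop:cycleConvexity}. Indeed, $\hull(C)$ is the inclusion-wise minimal convex set containing $C$ (equivalently, by condition (C2), the intersection of all convex sets containing $C$, which is itself convex). Because $A$ is one such convex set, we obtain $\hull(C)\subseteq A$. Finally, as long as $S\neq\emptyset$ we have $A=V(G)\setminus S\subsetneq V(G)$, so $\hull(C)\subseteq A\subsetneq V(G)$, whence $\hull(C)\neq V(G)$; that is, $C$ is not a hull set.

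There is no genuine obstacle here, as the statement is a direct consequence of the closure axioms of the convexity: the argument never needs to unfold the concrete interval function $\icc$, only the fact that convex sets are closed under intersection. The single point requiring attention is that $S$ be nonempty for the conclusion to hold, since if $S=\emptyset$ then $A=V(G)$ and $C$ may itself be a hull set; this is exactly the regime in which a co-convex set carries nontrivial obstructing information, and it is the case in which the proposition will be applied.
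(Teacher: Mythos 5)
Your proof is correct and is exactly the argument the paper intends: the paper states this proposition without proof (``follows from the definitions''), and your reasoning --- the complement $A=V(G)\setminus S$ is convex and contains $C$, so by minimality $\hull(C)\subseteq A\subsetneq V(G)$ --- is the standard one. Your remark that $S$ must be nonempty for the conclusion to hold is a valid caveat that the paper leaves implicit, since $\emptyset$ is technically co-convex but carries no obstruction.
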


Consequently, a hull set can be seen as a \emph{hitting-set} (or \emph{transversal}) of the family of co-convex sets of $G$.
Given a graph $G = (V,E)$, by definition, one should notice that a subset $S\subseteq V$ is co-convex in the cycle convexity if each vertex $v\in S$ does not belong to a cycle $C$ of $G$ such that $v$ is the unique vertex of $V(C)\cap S$. Equivalently, $S\subseteq V$ is co-convex in the cycle convexity if each vertex $v\in S$ has at most one edge linking $v$ to some vertex in a connected component of $G[V\setminus S]$.
A particular case of co-convex sets is the one of singletons.

\begin{proposition}
\label{prop:coconvex_singleton}
Given a graph $G = (V,E)$ and $v\in V$, then $\{v\}$ is a co-convex set in the cycle convexity if, and only if, $v$ lies in no cycle of $G$.
\end{proposition}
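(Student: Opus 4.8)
The plan is to prove both directions of the equivalence directly from the definition of $\icc$, since the statement concerns the single-vertex co-convex sets, which are the simplest case to analyze. Recall that $S\subseteq V$ is co-convex precisely when $V\setminus S$ is convex, i.e. when $\icc(V\setminus S)=V\setminus S$. For $S=\{v\}$ this means $\icc(V\setminus\{v\})=V\setminus\{v\}$, which by the definition of $\icc$ is equivalent to saying that $v$ is not added to $V\setminus\{v\}$ by the interval function.

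First I would unfold the definition: $v\in\icc(V\setminus\{v\})$ holds if and only if there is a cycle in $G[(V\setminus\{v\})\cup\{v\}]=G[V]=G$ containing $v$. Thus $\{v\}$ is co-convex if and only if there is \emph{no} cycle of $G$ containing $v$, which is exactly the assertion that $v$ lies in no cycle of $G$. In fact this reduces the proof to a one-line observation once the right set is plugged in.

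For the forward direction, I would argue by contraposition: if $v$ lies on some cycle $C$ of $G$, then all vertices of $C$ other than $v$ belong to $V\setminus\{v\}$, so $C$ is a cycle in $G[(V\setminus\{v\})\cup\{v\}]$ containing $v$, whence $v\in\icc(V\setminus\{v\})$ and $V\setminus\{v\}$ is not convex, i.e. $\{v\}$ is not co-convex. For the reverse direction I would simply note that if $v$ is on no cycle of $G$, then there is certainly no cycle of $G$ through $v$, so the condition defining $\icc$ fails for $v$, giving $\icc(V\setminus\{v\})=V\setminus\{v\}$ and hence $\{v\}$ co-convex.

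There is essentially no obstacle here: the only subtlety is to be careful that the ground set $(V\setminus\{v\})\cup\{v\}$ in the definition of $\icc$ is all of $V$, so that "there is a cycle in $G[(V\setminus\{v\})\cup\{v\}]$ containing $v$" is literally "$v$ lies on a cycle of $G$"; this makes the equivalence immediate rather than requiring any case analysis. This proof can also be viewed as the specialization to singletons of the characterization given just before the statement, namely that $S$ is co-convex if and only if every $v\in S$ has at most one edge joining it to any component of $G[V\setminus S]$, but the direct verification above is cleaner for the single-vertex case.
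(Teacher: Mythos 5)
Your proof is correct and follows essentially the same route as the paper: both directions are read off directly from the definition of $\icc$ applied to $V\setminus\{v\}$, where $v$ is the only candidate vertex to be added. The only cosmetic difference is that the paper's converse direction detours through the ``at most one edge to each component of $G-v$'' reformulation, whereas you observe directly that the cycle condition in the definition of $\icc$ fails; both are fine.
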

\begin{proof}
Suppose first, by contradiction, that $\{v\}$ is co-convex and that $G$ has a cycle $C$ containing $v$. By hypothesis, $V\setminus\{v\}$ should be convex, i.e. $\icc(V\setminus\{v\}) = V\setminus\{v\}$. However, due to the cycle $C$, $v\in \icc(V\setminus\{v\})$, a contradiction.

On the other hand, suppose that $v$ lies in no cycle of $G$. Consequently, $v$ cannot have two edges to a same connected component of $G-v$. Thus, $\{v\}$ is co-convex.
\end{proof}

\begin{proposition}\label{prop:nocriticalvertex}
Let $G$ be a graph and $w\in V(G)$. Then, every minimum hull set $S$ of $G$ must contain $w$ if, and only if, $\{w\}$ is a co-convex set of $G$.
\end{proposition}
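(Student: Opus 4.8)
The plan is to prove the two implications separately, the reverse implication being immediate and the forward one requiring an exchange argument. For the direction ``$\{w\}$ co-convex $\Rightarrow$ $w$ belongs to every minimum hull set'', I would simply invoke Proposition~\ref{prop:co-convex} with $S=\{w\}$ as the co-convex set: any $C\subseteq V(G)$ with $w\notin C$ satisfies $C\cap\{w\}=\emptyset$ and hence fails to be a hull set. Thus every hull set, in particular every minimum one, must contain $w$.

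For the converse I would argue by contraposition: assuming $\{w\}$ is not co-convex, I would produce a minimum hull set avoiding $w$. By Proposition~\ref{prop:coconvex_singleton}, $\{w\}$ not co-convex means $w$ lies on some cycle of $G$. I start from an arbitrary minimum hull set $S$; if $w\notin S$ there is nothing to do, so assume $w\in S$ and set $A=\hull(S\setminus\{w\})$. The first observation is that $w\notin A$: otherwise $S\subseteq\hull(S\setminus\{w\})$, so $S\setminus\{w\}$ would be a hull set of smaller size, contradicting minimality of $S$. The second observation is that $A\cup\{w\}$ is a hull set (it contains $S$) while $A\cup\{w\}\neq V(G)$ and is therefore not convex; indeed $A=V(G)\setminus\{w\}$ is impossible, since $V(G)\setminus\{w\}$ is convex only when $w$ lies on no cycle, again by Proposition~\ref{prop:coconvex_singleton}.

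The heart of the argument is then a swap. Since $A\cup\{w\}$ is not convex, there is a vertex $y\in\icc(A\cup\{w\})\setminus(A\cup\{w\})$, witnessed by a cycle $C_y$ with $V(C_y)\setminus(A\cup\{w\})=\{y\}$. This cycle must pass through $w$, for otherwise $V(C_y)\setminus A=\{y\}$ would force $y\in\icc(A)=A$, contradicting $y\notin A$. Reading the same cycle with the roles of $w$ and $y$ interchanged, $V(C_y)\setminus(A\cup\{y\})=\{w\}$, so $w\in\icc(A\cup\{y\})$ and hence $\hull(A\cup\{y\})\supseteq\hull(A\cup\{w\})=V(G)$. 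I would then set $S'=(S\setminus\{w\})\cup\{y\}$; since $y\notin A\supseteq S\setminus\{w\}$ and $y\neq w$, the set $S'$ has the same cardinality as $S$, avoids $w$, and satisfies $\hull(S')\supseteq\hull(A\cup\{y\})=V(G)$, so it is a minimum hull set not containing $w$.

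The main obstacle is isolating the right vertex to exchange for $w$: a naive replacement of $w$ by one of its cycle-neighbours need not re-contaminate $w$ when several cycle vertices lie outside $A$. The key idea that resolves this is to let the convexity itself point to the vertex, taking $y$ to be \emph{any} vertex freshly contaminated by $A\cup\{w\}$; its witnessing cycle is then automatically a cycle through $w$ all of whose other vertices lie in $A$, so adding $y$ to $A$ reinstates $w$ and recovers the whole hull.
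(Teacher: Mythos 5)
Your proof is correct, and it follows the same overall strategy as the paper's: the forward direction via Proposition~\ref{prop:co-convex}, and the converse by an exchange argument that swaps $w$ for a vertex whose contamination depends on $w$. The execution of the exchange differs in a way worth noting. The paper locates the swap vertex $w'$ by running the percolation of $S$ itself and taking the \emph{first} step $k$ at which some vertex is contaminated via a cycle through $w$; it then needs the minimality of $k$ to justify the identity $\iccp{k-1}(S\setminus\{w\})=\iccp{k-1}(S)\setminus\{w\}$, which guarantees that the rest of the witnessing cycle is already contaminated without $w$'s help. You instead first saturate $S\setminus\{w\}$ to its hull $A$ and take \emph{any} vertex $y$ freshly contaminated from $A\cup\{w\}$; since $A$ is convex, the witnessing cycle automatically lies in $A\cup\{w,y\}$ and must pass through $w$, so the swap $w\leftrightarrow y$ works in one step with no bookkeeping over percolation stages. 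Your version also uses the identity $\hull(X\cup Y)=\hull(\hull(X)\cup Y)$ implicitly when concluding $\hull(S')\supseteq\hull(A\cup\{y\})$, which is standard and valid in any convexity space. The net effect is a slightly cleaner argument that trades the paper's induction-flavoured ``minimal $k$'' step for the convexity of $A$; both yield exactly the same conclusion with the same amount of generality.
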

\begin{proof}
We already argued that if $\{w\}$ is a co-convex set of $G$, then $w\in S$, for any (minimum) hull set $S$ of $G$. It just remains to prove the converse.

Let us prove, by the contrapositive, that if $\{w\}$ is not a co-convex set of $G$, then there exists a minimum hull set $S'$ of $G$ that does not contain $w$.
Thus, suppose that $\{w\}$ is not a co-convex set of $G$. If no minimum hull set of $G$ contains $w$, we have nothing to prove. Thus, let $S$ be a minimum hull set of $G$ such that $w\in S$. We shall construct a hull set $S'$ of $G$ such that $w\notin S'$ and $|S'|=|S|=\hncc(G)$.

Since $S$ is a minimum hull set of $G$, $\hull(S\setminus \{w\})$ does not contain all the vertices of $G$. In particular, we observe that $w\notin \hull(S\setminus \{w\})$, as otherwise $S\setminus\{w\}$ would be a smaller hull set of $G$. Moreover, $\hull(S\setminus \{w\})\neq V(G)\setminus\{w\}$ as $\{w\}$ is not co-convex and, again, $S\setminus\{w\}$ would be a smaller hull set of $G$. Consequently, there exist a vertex $w'\neq w$ such that $\{w,w'\}\cap \hull(S\setminus \{w\}) = \emptyset$.

Intuitively, at least one vertex $w'$ of $G$ depend on $w$ to be included in $\hull(S)$. Let us chose a first $w'$ to be included in $\hull(S)$ thanks to $w$. 
Formally, let $k$ be the smallest positive integer satisfying the following property: there is a vertex $w'\in V(G)$ such that $w'\in \iccp{k}(S)\setminus \iccp{k-1}(S)$, $w'$ is the unique vertex of a cycle $C$ that does not belong to $\iccp{k-1}(S)$ and $w\in V(C)$. By the previous remarks, note that such $k$ must exist.

Let $S' = (S\setminus\{w\}) \cup \{w'\}$. By the minimality of $k$, note that $\iccp{k-1}(S\setminus\{w\}) = \iccp{k-1}(S)\setminus \{w\}$. In particular, $w$ and $w'$ are the only vertices of $V(C)$ that do not belong to $\iccp{k-1}(S\setminus\{w\})$. Since we add $w'$ to $S'$, note that $w\in \iccp{k}(S')$ and thus $\hull(S)=\hull(S')$. This implies that $S'$ is a minimum hull set of $G$ that does not contain $w$.
\end{proof}

We shall use Propositions~\ref{prop:coconvex_singleton} and~\ref{prop:nocriticalvertex} to deduce our main result in Section~\ref{sec:upperbound}. Let us first use them to deduce simpler facts regarding co-convex sets with a single vertex.

\begin{corollary}
\label{cor:removesingletoncoconvex}
Given a graph $G = (V,E)$ and $v\in V$ such that $v$ lies in no cycle of $G$, then $\hncc(G) = \hncc(G-v)+1$.
\end{corollary}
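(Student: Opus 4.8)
The plan is to prove the equality $\hncc(G) = \hncc(G-v)+1$ by establishing the two inequalities separately, leveraging the characterization of singleton co-convex sets from Proposition~\ref{prop:coconvex_singleton} together with Proposition~\ref{prop:nocriticalvertex}. The crucial structural observation is that since $v$ lies in no cycle of $G$, the set $\{v\}$ is co-convex, and hence by Proposition~\ref{prop:nocriticalvertex} every minimum hull set $S$ of $G$ must contain $v$. Moreover, because $v$ lies in no cycle, $v$ can never be \emph{generated} by any cycle in $\icc$ (the only cycles through $v$ would have to live in $G$, and there are none), so $v$ contributes nothing to contaminating other vertices except as a starting seed.

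For the inequality $\hncc(G) \le \hncc(G-v)+1$, I would start from a minimum hull set $S'$ of $G-v$ and show that $S' \cup \{v\}$ is a hull set of $G$ of size $\hncc(G-v)+1$. The key point is that any cycle of $G$ is entirely contained in $G-v$ (since $v$ is in no cycle of $G$), so the contamination process on $V(G-v)$ behaves identically in $G$ as in $G-v$; thus $S'$ already contaminates all of $V(G-v)$ inside $G$, and adding $v$ to the seed set guarantees $v$ itself is included, giving a hull set of $G$.

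For the reverse inequality $\hncc(G) \ge \hncc(G-v)+1$, I would take a minimum hull set $S$ of $G$. As noted, $v \in S$. I claim $S \setminus \{v\}$ is a hull set of $G-v$. Indeed, every vertex of $V(G-v)$ is contaminated by $S$ in $G$ via cycles, and every such cycle avoids $v$, so it is also a cycle of $G-v$; therefore the same contamination sequence works verbatim in $G-v$ starting from $S \setminus \{v\}$, showing $\hull_{G-v}(S\setminus\{v\}) = V(G-v)$. Hence $\hncc(G-v) \le |S|-1 = \hncc(G)-1$, which combined with the first inequality yields the claimed equality.

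The step I expect to require the most care is verifying that the contamination processes in $G$ and $G-v$ genuinely coincide on $V(G-v)$ — specifically, the assertion that no cycle witnessing the generation of a vertex $u \ne v$ can pass through $v$. This follows because $v$ lies in no cycle of $G$ at all, so in particular no cycle used by $\icc$ to generate any vertex uses $v$; but one should state this cleanly, noting that removing $v$ neither creates nor destroys any relevant cycle among the remaining vertices. Once this equivalence of cycle structures is made explicit, both inequalities follow without further difficulty.
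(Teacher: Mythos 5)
Your proposal is correct and follows essentially the same route as the paper: both directions are proved by adding or removing $v$ from a minimum hull set, using that $\{v\}$ is co-convex (so $v$ must be a seed) and that no cycle of $G$ passes through $v$ (so the contamination of $V(G-v)$ is identical in $G$ and $G-v$). The only cosmetic difference is that you invoke Proposition~\ref{prop:nocriticalvertex} to place $v$ in every minimum hull set, while the paper cites Proposition~\ref{prop:coconvex_singleton} directly.
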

\begin{proof}
Let $H=G-v$. First, let $S_H$ be a minimum hull set of $H$. Let $S = S_H\cup \{v\}$. Since $H$ is a subgraph of $G$ and $S_H$ is a hull set of $H$, note that $V(G)\setminus\{v\}\subseteq \hull(S)$. Consequently, $\hncc(G)\leq |S|=|S_H|+1=\hncc(G-v)+1$.

On the other hand, let $S_G$ be a minimum hull set of $G$. By Proposition~\ref{prop:coconvex_singleton}, note that $\{v\}$ is co-convex and thus $v\in S_G$. We claim that $S=S_G\setminus\{v\}$ is a hull set of $H$. In fact, since $v$ belongs to no cycle of $G$, no vertex is contaminated thanks to a cycle containing $v$. Consequently, $\hull(S_G) = \hull(S_G\setminus\{v\})\cup\{v\}$. Then, $S$ is a hull set of $H$ and then $\hncc(H)\leq |S| = |S_G|-1=\hncc(G)-1$.
\end{proof}

Corollary~\ref{cor:removesingletoncoconvex} can be used in an algorithm to compute $\hncc(G)$ by iteratively removing co-convex sets with a single vertex. We use this idea in Section~\ref{sec:poly}.

Propositions~\ref{prop:coconvex_singleton} and~\ref{prop:nocriticalvertex} also allow us to characterize graphs $G$ satisfying $\hncc(G)=n(G)$.
\begin{corollary}
\label{cor:hulltrees}
Let $G$ be a graph. Then, $\hncc(G)=n(G)$ if, and only if, $G$ is a forest.
\end{corollary}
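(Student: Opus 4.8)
The plan is to prove the corollary by establishing both implications, leaning heavily on the two propositions about singleton co-convex sets. The statement to prove is that $\hncc(G) = n(G)$ if and only if $G$ is a forest. The key observation connecting these is Proposition~\ref{prop:coconvex_singleton}, which says that $\{v\}$ is co-convex precisely when $v$ lies in no cycle of $G$, together with Proposition~\ref{prop:nocriticalvertex}, which says that $w$ belongs to every minimum hull set if and only if $\{w\}$ is co-convex.

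First I would prove the easier direction: if $G$ is a forest, then $\hncc(G) = n(G)$. Since a forest is acyclic, no vertex lies in any cycle, so by Proposition~\ref{prop:coconvex_singleton} every singleton $\{v\}$ is co-convex. Then by Proposition~\ref{prop:nocriticalvertex} every vertex must belong to every minimum hull set, forcing any hull set to contain all of $V(G)$; hence $\hncc(G) = n(G)$. (Alternatively, and perhaps more cleanly, one can observe directly that for a forest $\icc(S) = S$ for every $S$, since there are no cycles at all, so the only hull set is $V(G)$ itself.)

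For the converse, I would argue the contrapositive: if $G$ is not a forest, then $\hncc(G) < n(G)$. If $G$ is not a forest, it contains a cycle, so there is some vertex $v$ lying on a cycle of $G$. By Proposition~\ref{prop:coconvex_singleton}, $\{v\}$ is then \emph{not} co-convex, and by Proposition~\ref{prop:nocriticalvertex} there exists a minimum hull set $S'$ of $G$ that does not contain $v$. Since $S' \neq V(G)$, we get $\hncc(G) = |S'| \le n(G) - 1 < n(G)$. This completes both directions.

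The routine part is just invoking the propositions correctly; there is essentially no computation. The only thing requiring a moment of care is the forest direction, where I want to make sure the logic flows cleanly: the cleanest route is probably to note that in a forest there are no cycles, so $\icc$ never adds any vertex, making $V(G)$ the unique hull set. I do not anticipate a genuine obstacle here, as the corollary is a direct consequence of the characterization of singleton co-convex sets; the main stylistic decision is whether to phrase the forest case via the propositions or via the direct observation that $\icc$ is the identity map on an acyclic graph.
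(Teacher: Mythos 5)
Your proposal is correct and follows essentially the same route as the paper: both directions are handled by combining Proposition~\ref{prop:coconvex_singleton} (singletons are co-convex exactly for vertices on no cycle) with Proposition~\ref{prop:nocriticalvertex} (a vertex lies in every minimum hull set iff its singleton is co-convex). The paper's proof is a near-verbatim match of your argument, including the contrapositive treatment of the ``only if'' direction.
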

\begin{proof}
Suppose first that $G$ has a cycle $C$. Then, for any $w\in V(C)$ we have that $\{w\}$ is not co-convex. Thus, by Proposition~\ref{prop:nocriticalvertex}, we deduce that there exists a minimum hull set of $G$ that does not contain $w$, i.e. $\hncc(G)<n(G)$.

In case $G$ has no cycles, Proposition~\ref{prop:coconvex_singleton} implies that $\{v\}$ is co-convex, for each $v\in V(G)$. Consequently, each vertex $v\in V(G)$ must belong to any hull set of $G$.
\end{proof}

In the other extreme, it is straightforward to prove that:

\begin{proposition}
\label{prop:complete}
If $G$ is a simple graph and $K$ is a clique of $G$ with at least two vertices, then $K\subseteq \icc(\{u,v\})$, for any $u,v\in K$. In particular, $\hncc(K_n)= \min\{2,n\}$.
\end{proposition}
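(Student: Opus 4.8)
The plan is to prove the inclusion in a single application of the interval function, exploiting the fact that in a simple graph any two clique vertices together with a third clique vertex form a triangle. First I would fix $u,v\in K$ and take an arbitrary $w\in K\setminus\{u,v\}$. Since $K$ is a clique of the simple graph $G$, the three edges $uv$, $uw$ and $vw$ all belong to $E(G)$, so $(u,v,w,u)$ is a triangle --- a cycle of length $3$ --- living inside $G[\{u,v,w\}]\subseteq G[\{u,v\}\cup\{w\}]$ and containing $w$. By the definition of $\icc$, this places $w\in\icc(\{u,v\})$. Together with the trivial containment $\{u,v\}\subseteq\icc(\{u,v\})$, this yields $K\subseteq\icc(\{u,v\})$, proving the first assertion in one step.

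For the ``in particular'' statement I would split on $n$. When $n=1$, the graph $K_1$ is a (trivial) forest, so Corollary~\ref{cor:hulltrees} gives $\hncc(K_1)=n(K_1)=1=\min\{2,n\}$. When $n\ge 2$, I apply the first part with $K=V(K_n)$, which is itself a clique: for any two distinct vertices $u,v$ we obtain $V(K_n)\subseteq\icc(\{u,v\})$, so $\{u,v\}$ is already a hull set and $\hncc(K_n)\le 2$. For the matching lower bound I would rule out singleton hull sets: for any single vertex $w$, every induced subgraph $G[\{w,x\}]$ on two vertices of a simple graph contains no cycle (the shortest cycle in a simple graph has length $3$), so no new vertex is ever generated and $\icc(\{w\})=\{w\}$; hence $\hull(\{w\})=\{w\}\ne V(K_n)$ whenever $n\ge2$. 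Therefore $\hncc(K_n)\ge 2$, and combining the two bounds gives $\hncc(K_n)=2=\min\{2,n\}$ for $n\ge 2$.

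The argument is essentially immediate, so there is no serious obstacle; the only points requiring care are the edge case $n=1$ (handled separately via Corollary~\ref{cor:hulltrees}) and the explicit use of the simplicity hypothesis, which is exactly what forbids $2$-vertex cycles and thus both furnishes the triangles used in the inclusion and blocks singleton hull sets. If loops or parallel edges were allowed the statement could fail, so I would flag simplicity as the load-bearing assumption throughout.
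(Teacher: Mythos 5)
Your proof is correct and is exactly the ``straightforward'' argument the paper has in mind (the paper states Proposition~\ref{prop:complete} without proof): triangles through a third clique vertex give the inclusion in one application of $\icc$, and simplicity rules out singleton hull sets for $n\ge 2$. Your careful handling of the $n=1$ case and your flagging of simplicity as the load-bearing hypothesis are both appropriate.
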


These simple results on trees and complete graphs can be generalized to chordal graphs as we show in the sequel.

\begin{lemma}\label{lem:blocks}
Let $G$ be a connected simple graph, $u$ be a cut-vertex of $G$, and $C$ be the vertex set of some component of $G-u$. Also, let $G_1 = G[C\cup \{u\}]$ and $G_2 = G-C$. Then,
\[\hncc(G_1) +\hncc(G_2) - 1\le \hncc(G)\le \hncc(G_1)+ \hncc(G_2).\]
\end{lemma}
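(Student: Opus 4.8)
The plan is to build everything on a single structural observation and then treat the two inequalities separately, with almost all of the difficulty concentrated in the lower bound. Write $D := V(G)\setminus(C\cup\{u\})$, so that $G_2 = G[D\cup\{u\}]$. Since $u$ is a cut-vertex separating $C$ from $D$, every cycle of $G$ lies entirely in $G_1$ or entirely in $G_2$: a cycle meeting both $C$ and $D$ would have to pass through $u$ twice, which the definition of a cycle forbids. As $G_1$ and $G_2$ are induced subgraphs, a cycle of $G_i$ is a cycle of $G$ and vice versa (for cycles supported on $V(G_i)$); consequently, for any $T\subseteq V(G_i)$ the value of $\icc(T)$ computed inside $G_i$ is contained in the value computed inside $G$, and therefore the convex hull of $T$ taken in $G_i$ is contained in the hull taken in $G$. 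I will use ambient subscripts $\hull_{G_i}$ and $\hull_G$ throughout.

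For the upper bound I would take minimum hull sets $S_1$ of $G_1$ and $S_2$ of $G_2$ and show that $S_1\cup S_2$ is a hull set of $G$. By the containment just noted, $\hull_G(S_1\cup S_2)$ contains both $\hull_{G_1}(S_1)=V(G_1)$ and $\hull_{G_2}(S_2)=V(G_2)$, hence equals $V(G)$; since $|S_1\cup S_2|\le |S_1|+|S_2|$ this immediately gives $\hncc(G)\le\hncc(G_1)+\hncc(G_2)$.

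For the lower bound I start from a minimum hull set $S$ of $G$ and set $S_i=S\cap V(G_i)$. The confinement of cycles yields the crucial fact that a vertex of $C$ can only be contaminated through a cycle living in $G_1$; running the $G$-process from $S$ and tracking only the vertices of $C$, an induction on the contamination step shows that every vertex of $C$ lies in $\hull_{G_1}(S_1\cup\{u\})$, so $S_1\cup\{u\}$ is \emph{always} a hull set of $G_1$ (and symmetrically $S_2\cup\{u\}$ is always a hull set of $G_2$). This much already gives $\hncc(G_1)+\hncc(G_2)\le |S|+2$, which is one unit too weak.

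The heart of the argument — and the step I expect to be the main obstacle — is to save that extra unit by proving that $u$ need not be adjoined to both sides, i.e. that $u\in\hull_{G_1}(S_1)$ or $u\in\hull_{G_2}(S_2)$. If $u\in S$ this is trivial, so assume $u\notin S$ and consider the first step of the $G$-process at which $u$ becomes contaminated, witnessed by a cycle $Z$, which lies in, say, $G_1$. Every other vertex of $Z$ lies in $C$ and is contaminated strictly before $u$; hence each such vertex is contaminated only through $G_1$-cycles whose vertices are contaminated even earlier and therefore do not include $u$. An induction then places all vertices of $Z$ other than $u$ in $\hull_{G_1}(S_1)$, forcing $u\in\hull_{G_1}(S_1)$. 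Whichever side $Z$ lies on, on that side $u\in\hull_{G_i}(S_i)$, and by monotonicity this upgrades $S_i$ itself to a hull set of $G_i$ (since then $\hull_{G_i}(S_i)\supseteq\hull_{G_i}(S_i\cup\{u\})=V(G_i)$), while the other side uses $S_j\cup\{u\}$. Using $|S_1|+|S_2|=|S|+|S\cap\{u\}|$, the count becomes $\hncc(G_1)+\hncc(G_2)\le |S|+1=\hncc(G)+1$ in every case, which rearranges to the desired lower bound $\hncc(G_1)+\hncc(G_2)-1\le\hncc(G)$.
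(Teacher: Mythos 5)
Your proposal is correct and follows essentially the same route as the paper: confine every cycle of $G$ to $G_1$ or $G_2$, get the upper bound from the union of hull sets, and get the lower bound by showing that the restriction of a minimum hull set $S$ to each side becomes a hull set after possibly adding $u$, with one unit saved because either $u\in S$ or $u$ is contaminated through a cycle on one side, making that restriction a hull set on its own. Your write-up merely makes explicit (via induction on contamination steps) the process-tracking that the paper leaves implicit in its case analysis on whether $u\in\hull(S_i)$.
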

\begin{proof} First of all, observe that the vertex set of any cycle of $G$ is included either in $V(G_1)$ or in $V(G_2)$ ($u$ is the unique vertex in $V(G_1)\cap V(G_2)$ and $G$ has no loops as it is simple).
To proof the upper bound, it suffices then to observe that the union of hull sets of $G_1$ and $G_2$ is a hull set of $G$.

For the lower bound, consider a minimum hull set $S$ of $G$ and denote $(V(G_i)\cap S)\setminus\{u\}$ by $S_i$, for $i\in\{1,2\}$. First, note that $S_i\cup\{u\}$ is a hull set of $G_i$, for each $i\in\{1,2\}$. This means that $\hncc(G_i) \le |S_i|+1$, for each $i\in\{1,2\}$. We consider two cases.

First suppose that $u\notin \hull(S_i)$, for both for $i=1$ and $i=2$. Since $S$ is a hull set of $G$ and any cycle of $G$ is included either in $G_1$ or in $G_2$, we deduce that $u\in S$. 
In this case, note that $\hncc(G) = |S| = 1+|S_1|+|S_2|\ge \hncc(G_1)+\hncc(G_2)-1$.

Suppose then w.l.o.g. that $u\in \hull(S_1)$. Then, $u\notin S$ as otherwise $S\setminus\{u\}$ would be a smaller hull set of $G$. Also, note that $S_1$ must be a minimum hull set of $G_1$, i.e., $|S_1| = \hncc(G_1)$. In fact, observe that if $S'$ is a hull set of $G_1$, then $S'\cup S_2$ is a hull set of $G$. Thus, the existence of a smaller hull set $S_1'$ of $G_1$ such that $|S_1'|<|S_1|$ would contradict the minimality of $S$. Then, we deduce that
$\hncc(G) = |S| = |S_1|+|S_2| \ge \hncc(G_1)+\hncc(G_2)-1$ (recall that $|S_2|\ge \hncc(G_2)-1$). 
\end{proof}

\begin{corollary}\label{cor:hngep+1}
If $G$ is a connected simple graph on $n\ge 2$ vertices with $p$ blocks, then $\hncc(G)\ge p+1$.
\end{corollary}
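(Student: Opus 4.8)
The plan is to prove the statement by induction on the number of blocks $p$, using Lemma~\ref{lem:blocks} as the main engine together with the block-cutpoint tree structure of $G$. The base case is $p=1$: here $G$ is itself $2$-connected (a single block) on $n\ge 2$ vertices, so $G$ contains a cycle through every vertex, and in particular $G$ is not a forest. By Corollary~\ref{cor:hulltrees} we have $\hncc(G)<n$, but more directly we just need $\hncc(G)\ge 2$, which holds because a single vertex cannot be a hull set of a graph on at least two vertices (a one-vertex set percolates nothing, as any cycle has at least three vertices in a simple graph). Thus $\hncc(G)\ge 2 = p+1$, establishing the base case.

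For the inductive step, assume the bound holds for all connected simple graphs with fewer than $p$ blocks, and let $G$ have $p\ge 2$ blocks. Since the block-cutpoint tree is a tree, $G$ has at least one leaf-block $B$, which shares exactly one cut-vertex $u$ with the rest of $G$. First I would set $G_1 = B$ (the leaf-block, so $G_1$ has exactly one block and $\hncc(G_1)\ge 2$) and $G_2 = G - (V(B)\setminus\{u\})$, which is connected, simple, and has exactly $p-1$ blocks. To match the hypotheses of Lemma~\ref{lem:blocks}, I take $C = V(B)\setminus\{u\}$ to be the vertex set of the union of components of $G-u$ lying inside $B$; then $G_1 = G[C\cup\{u\}] = B$ and $G_2 = G - C$ as required. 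Applying the lower bound of Lemma~\ref{lem:blocks} gives
\[
\hncc(G)\ge \hncc(G_1)+\hncc(G_2)-1.
\]
By the induction hypothesis $\hncc(G_2)\ge (p-1)+1 = p$, and since $G_1 = B$ is a $2$-connected graph on at least two vertices we have $\hncc(G_1)\ge 2$. Substituting yields $\hncc(G)\ge 2 + p - 1 = p+1$, as desired.

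The step I expect to require the most care is verifying that the decomposition genuinely satisfies the hypotheses of Lemma~\ref{lem:blocks} and that $G_2$ really has exactly $p-1$ blocks. One must confirm that removing the interior of a leaf-block deletes exactly one block from the block-cutpoint tree while leaving $G_2$ connected: the cut-vertex $u$ is retained, so every other block (and its incidences) survives intact, and no new cut-vertices or cycles are created. A minor subtlety is the degenerate possibility that the leaf-block $B$ is a single edge (i.e.\ $B\cong K_2$); this is still a valid block, still $2$-connected in the convention that $K_2$ is a block, and still has $\hncc(B)\ge 2$ since both its vertices must lie in any hull set by Proposition~\ref{prop:coconvex_singleton} (neither endpoint lies on a cycle within $B$). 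Once these structural bookkeeping facts are nailed down, the induction closes cleanly.
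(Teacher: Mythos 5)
Your proof is correct and follows essentially the same route as the paper: induction on $p$, with the base case handled by noting that no singleton is a hull set of a simple graph on at least two vertices, and the inductive step peeling off a leaf-block via the lower bound of Lemma~\ref{lem:blocks}. (The only slip is the aside that a $2$-connected block ``contains a cycle through every vertex,'' which fails for $K_2$, but you do not use that claim and you correctly handle the $K_2$ case separately.)
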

\begin{proof}
We prove by induction on the number of blocks of $G$. If $p=1$, the corollary holds trivially since $n\ge 2$ and $G$ has no multiple edges (i.e. no singleton can be a hull set of $G$).

Suppose now that the statement holds for every connected simple graph $G'$ with less than $p$ blocks and let $G$ be a connected simple graph with $p\geq 2$ blocks. Let $G_1$ be a leaf block of $G$ connected to the rest of the graph $G$ by the cut-vertex $u$. Let $G_2 = G-V(G_1-u)$. Note that the number of blocks in $G_2$ equals $p-1$. By induction hypothesis, we then get $\hncc(G_2)\ge p$. The corollary follows by Lemma~\ref{lem:blocks} and the fact that $\hncc(G_1)\ge 2$.
\end{proof}

\begin{theorem}
\label{thm:chordal}
If $G$ is a connected simple chordal graph on $n\geq 2$ vertices with $p$ blocks, then $\hncc(G) = p+1$.
\end{theorem}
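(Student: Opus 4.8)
The lower bound $\hncc(G)\ge p+1$ is already supplied by Corollary~\ref{cor:hngep+1} (which needs neither chordality nor simplicity), so my plan is to establish the matching upper bound $\hncc(G)\le p+1$ by exhibiting a hull set of that size. The operative fact I would record first is that, on a chordal graph, contamination follows a triangle rule: since any long cycle has a chord, a vertex $v$ lying on a cycle whose other vertices are contaminated also lies on a triangle with two contaminated vertices, so $v\in\icc(A)$ exactly when $v$ has two adjacent neighbours in $A$. In particular, by Proposition~\ref{prop:complete}, two contaminated vertices inside a common clique contaminate that whole clique.

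The technical heart is the claim that \emph{in a $2$-connected chordal graph $H$ with at least two vertices, every edge is a hull set}, i.e. $\hull(\{x,y\})=V(H)$ whenever $xy\in E(H)$. I would prove this by induction on $|V(H)|$. If $H$ is complete the claim is immediate from Proposition~\ref{prop:complete}; otherwise $H$ has at least two non-adjacent simplicial vertices, and since $x,y$ are adjacent at least one such vertex $v$ lies outside $\{x,y\}$. The key sub-step, which I expect to require the most care, is that $H-v$ is again $2$-connected: as $N(v)$ is a clique it stays within a single component after deleting any other vertex, so $v$ can never be the vertex that reconnects $H$, and hence any cut-vertex of $H-v$ would already be a cut-vertex of $H$. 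Applying the induction hypothesis to $xy$ inside $H-v$ contaminates all of $V(H)\setminus\{v\}$, and finally $v$, being simplicial of degree at least $2$, has two adjacent contaminated neighbours and so is contaminated too. (Alternatively one may argue on a clique tree of $H$, using that intersections of adjacent maximal cliques are minimal separators, which in a $2$-connected graph have size at least $2$, so full contamination propagates from clique to clique.)

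With this claim in hand I would build the hull set globally rather than via Lemma~\ref{lem:blocks}. Root the block--cutpoint tree of $G$ at an arbitrary block $B_0$; every other block $B$ then has a unique cut-vertex $c_B$ on the path to $B_0$, shared with its parent block. Put into $S$ one edge $\{a,b\}$ of $B_0$ together with, for each block $B\neq B_0$, one neighbour $s_B$ of $c_B$ inside $B$ (which exists because each block has at least two vertices). Then $|S|\le 2+(p-1)=p+1$. To see that $S$ is a hull set, process the blocks descending the rooted tree: the claim contaminates all of $B_0$; and when a block $B$ is reached, its cut-vertex $c_B$ already belongs to the fully contaminated parent block, so $c_B$ is contaminated, and together with $s_B$ it forms an edge of $B$, whence the claim contaminates all of $B$. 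Every block is eventually contaminated, so $\hull(S)=V(G)$ and $\hncc(G)\le p+1$; combined with Corollary~\ref{cor:hngep+1} this gives $\hncc(G)=p+1$. I note that rooting at a block containing any prescribed vertex, and choosing the edge of $B_0$ through it, even shows that a minimum hull set can be forced to contain any prescribed vertex, although this strengthening is not needed here.
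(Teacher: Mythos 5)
Your proof is correct, and its overall architecture coincides with the paper's: the lower bound comes from Corollary~\ref{cor:hngep+1}, and the upper bound is obtained by walking down the block--cutpoint tree and placing, in each block, one edge (or one neighbour of the block's root cut-vertex) so that the key lemma --- every edge of a $2$-connected chordal graph is a hull set in the cycle convexity --- propagates contamination block by block. Where you genuinely diverge is in how that key lemma is established. The paper imports it: it cites the result of~\cite{CDPRS.11} that any edge of a $2$-connected chordal graph is a hull set in the $P_3$-convexity, and then observes that since the contaminated set stays connected at every step, $P_3$-contamination and cycle-contamination coincide there. You instead give a self-contained induction on $|V(H)|$ via Dirac's theorem (a non-complete chordal graph has two non-adjacent simplicial vertices, so one of them avoids the edge $\{x,y\}$), together with the correct observations that deleting a simplicial vertex preserves $2$-connectivity and that in a chordal graph $\icc$ reduces to the triangle rule (a shortest cycle through $v$ in $G[A\cup\{v\}]$ must be a triangle, since a longer one would have a chord splitting it). Your route costs a page of elementary case-checking but makes the theorem independent of the $P_3$-convexity literature and isolates the reusable fact that, on chordal graphs, cycle convexity is governed by triangles; the paper's route is shorter and makes the connection to known convexities explicit. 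Both are valid; the minor cosmetic difference in rooting the block tree at an arbitrary block rather than at a leaf is immaterial.
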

\begin{proof}
We construct a hull-set of $G$ of size $p+1$, and equality follows by Corollary~\ref{cor:hngep+1}. In~\cite{CDPRS.11}, the authors prove that if $H$ is a 2-connected chordal graph, then $\{u,v\}$ is a hull-set in the $P_3$ convexity of $H$ for any $uv\in E(H)$. Note that, since $\{u,v\}$ is initially connected, after each application of $\icc$ the obtained set is still connected. This implies that: (*) $\{u,v\}$ is a also a convex hull in the cycle convexity, for every edge $uv\in E(G)$, when $G$ is a 2-connected chordal graph. This is also used below when $G$ is not 2-connected.


Let $T$ be the block-cutpoint tree of $G$, and root $T$ at a leaf $r$. We say that the \emph{level} of a vertex $v \in V(T)$ is its distance to $r$ in $T$. Note that a vertex $b\in V(T)$ corresponding to a block $B$ must be at an even level, and we also say that the level of the block $B$ is the same of $b$. For two blocks $B_1$ and $B_2$ sharing a cut-vertex $x$, we say that $B_1$ is the \emph{parent} of $B_2$ if the level of $B_1$ is equal to the level of $B_2$ minus two, and we call $x$ the \emph{root} of $B_2$.

Let $B_r$ be the leaf-block corresponding to the root $r$ and having $x_r$ as unique cut-vertex.
We add $x_r$ to $S$ and we also arbitrarily choose one neighbor of $x_r$ in $B_r$ to add to $S$.
Then, for each block $B$ distinct from $B_r$, let $x$ be the root of $B$ and choose a neighbor of $x$ in $B$ to add to $S$. Note that we have chosen exactly $p+1$ vertices. It remains to prove that $S$ is a hull set of $G$.

Since $B_r$ is 2-connected and $x_r,v\in S$ for some $v\in N(x_r)\cap B_r$, by (*) we get that $V(B_r)\subseteq \icc(S)$. Suppose then that, for each block $B$ at level $2(k-1)$, we have that $V(B)\subseteq \iccp{k}(B)$, $k\geq 1$. Let $B$ be a block at level $2k$. By hypothesis, the root $x$ of $B$ belongs to $\iccp{k}(S)$, and since we chose a neighbor of $x$ in $B$ to add to $S$, by (*) we deduce that $V(B)\subseteq \iccp{k+1}(S)$.
Since the graph is finite, it follows that $S$ is a hull set of $G$, as we wanted to prove.
\end{proof} 

Another graph class that we can present an explicit formula to compute the hull number in the cycle convexity is the one of grids. Let us first analyse the structure of co-convex sets in this class.

Let $G= G_{m\times n}$ be a grid. We say that a subset $S\subseteq V(G)$ is \emph{boxed} if every connected component of $G[S]$ is also a grid. Let us define $L_i = \{(i,j)\mid j\in\{1,\ldots,n\}\}$ to be the \emph{$i$-th row of} $G_{m\times n}$ and $C_j = \{(i,j)\mid i\in\{1,\ldots,m\}\}$ to be its \emph{$j$-th column}.

\begin{proposition}\label{prop:squared}
Let $G= G_{m\times n}$ be a grid. $S\subseteq V(G)$ is convex if, and only if, $S$ is boxed.
\end{proposition}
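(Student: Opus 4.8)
The plan is to read $\icc$ on the grid through a single local criterion and then treat the two implications separately. The criterion is: for any $S\subseteq V(G)$ and any $v\notin S$, one has $v\in\icc(S)$ if and only if $v$ has at least two neighbours lying in one common connected component of $G[S]$. Indeed, a cycle $C$ with $V(C)\setminus S=\{v\}$ becomes, after deleting $v$, a path contained in $S$ whose two ends are neighbours of $v$ and hence lie in a single component; conversely, two neighbours of $v$ in one component $K$ are joined by a path inside $K$ which, together with $v$, closes up to such a cycle. Throughout I call a set $\{(i,j):a\le i\le b,\ c\le j\le d\}$ a \emph{box}, and I read ``$S$ is boxed'' as ``every connected component of $G[S]$ is a box''. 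The only geometric input I need is elementary: if $v=(i,j)$ lies outside a box $B$, then $v$ has at most one neighbour in $B$ (inspect the three cases according to whether one, two, or more of the inequalities cutting out $B$ are violated at $v$).

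For the implication boxed $\Rightarrow$ convex, suppose $S$ is boxed and pick $v\notin S$. Each component of $G[S]$ is a box $B\subseteq S$, so $v\notin B$ and, by the geometric fact, $v$ has at most one neighbour in $B$; as this holds for every component, $v$ never has two neighbours inside a single component, so $v\notin\icc(S)$ by the criterion. Therefore $\icc(S)=S$ and $S$ is convex. Note this correctly allows $v$ to have one neighbour in each of two distinct boxes, as in the convex set $\{(1,1),(1,3)\}$.

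For convex $\Rightarrow$ boxed I argue by contraposition. Assume some component $K$ of $G[S]$ is not a box; I must produce $v\notin S$ with $v\in\icc(S)$, and by the criterion it suffices to find a \emph{defect} $v$ (a vertex of the bounding box of $K$ that is not in $K$) having two neighbours in $K$: such a $v$ is not in $S$ at all, since a vertex of $S$ adjacent to $K$ would belong to $K$. A convenient sufficient configuration is a unit square (a $4$-face of the grid) with exactly three corners in $S$, the fourth being $v$. So the whole proposition reduces to the combinatorial claim: \emph{a connected, non-box set $K$ admits a defect with two neighbours in $K$}. I would prove this by letting $B=[a,b]\times[c,d]$ be the bounding box of $K$, recording that connectivity forces every row $L_i$ and every column $C_j$ meeting $B$ to meet $K$ as well (a path altering a single coordinate by $\pm1$ per step visits every intermediate row and column), and then locating a defect that is a reflex corner. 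Concretely, take $p$ to be the smallest row index carrying a defect; if $p$ is not the top row of $B$, then the row above is entirely in $K$, and the ``at most one $K$-neighbour'' assumption propagates horizontally to force the whole of row $p$ to consist of defects, which disconnects the part of $K$ above row $p$ from the part below (both nonempty because $K$ spans $B$) — a contradiction.

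The step I expect to be the main obstacle is finishing this extremal argument at the boundary of $B$. The propagation above needs a fully-$K$ neighbouring row (or column), so it applies cleanly only when the extremal defect does not sit on the corresponding side of $B$; by the vertical/horizontal reflection symmetry of the hypotheses one may also run it from the bottom row and from the left and right columns, which leaves only the awkward case in which defects simultaneously reach all four sides of $B$. That residual case still forces a reflex corner — a small ``U''-shaped example shows the witnessing defect can be strictly interior, so a purely extremal choice is not enough — and I expect the cleanest way to settle it is the two-consecutive-rows analysis: convexity is equivalent to the local condition that no unit square has exactly three of its four corners in $S$, and comparing rows $i$ and $i+1$ under this condition (no column where both rows meet $K$ can be adjacent to a column where exactly one does) forces, using connectivity, consecutive rows of $K$ to be one and the same interval, i.e. $K=B$. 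Carefully organizing this boundary bookkeeping is where the real work lies.
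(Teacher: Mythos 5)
Your overall route coincides with the paper's: both rest on the observation that a vertex $v\notin S$ enters $\icc(S)$ exactly when it has two neighbours in a single component of $G[S]$; the ``boxed $\Rightarrow$ convex'' direction follows from the elementary fact that a vertex outside a box has at most one neighbour in it; and the converse is reduced to exhibiting, for a connected non-box component $K$ with bounding box $B$, a vertex of $B\setminus K$ having two neighbours in $K$ (the paper uses the special case of a unit square with exactly three corners in $K$). Your easy direction and your reduction of the hard direction are complete and correct. The difference is that the paper disposes of the crux in a single unproved sentence (``if $C$ is not a grid, then $G$ must have four vertices \dots such that exactly one of them does not belong to $C$''), whereas you correctly identify this as the only real content and attempt to establish it.

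That is also where your argument stops short. The extremal step (take the topmost row of $B$ containing a defect; if it is not the top row of $B$, the row above is all of $K$, the no-two-neighbours hypothesis propagates along the defect row, and the resulting all-defect row disconnects $K$) is sound, but as you note it only shows that defects must reach all four sides of $B$. The residual case is delegated to a ``two-consecutive-rows'' analysis whose conclusion --- that the local three-corner condition plus connectivity forces consecutive rows of $K$ to be one and the same interval --- is asserted rather than derived; in particular, even showing that a single row of $K$ is an interval is not immediate, since a connecting path between two cells of the same row may leave that row. So, judged as a standalone proof, there is a genuine unfinished step at exactly this point (and you say as much). It should be recorded, however, that this is precisely the step the paper itself elides: your attempt does not introduce a new gap so much as expose how thin the published justification of the forward implication is. One way to close it along your lines: for the top pair of rows, a maximal run of ``top-only'' columns is, under the no-two-neighbours hypothesis, separated from the rest of $K$, contradicting connectedness unless $K$ is that horizontal segment (a box); this yields $R_a\subseteq R_{a+1}$, and the three symmetric inclusions then let the row-by-row bookkeeping go through.
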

\begin{proof}
Suppose first that $S$ is convex.
Let $C$ be a connected component of $G[S]$. As a single vertex is considered to be a grid, we get that if $C$ is not a grid, then $G$ must have four vertices $(i,j)$,$(i+1,j)$,$(i,j+1)$ and $(i+1,j+1)$ such that exactly one of them, say $(i,j)$ does not belong to $C$.
However, note that these four vertices form a cycle, thus $(i,j)$ should belong to $S$ (and thus to $C$), since $S$ is convex, a contradiction.

On the other hand, suppose that $S$ is boxed. Let $C$ be a connected component of $G[S]$. Note that no vertex in $V(G)\setminus V(C)$ has two distinct neighbors in $C$. Consequently, $\icc(S)=S$, i.e. $S$ is convex.
\end{proof}

Thus, observe that each co-convex set $S$ of a grid $G = G_{m\times n}$ is obtained from $G$ by the removal of a boxed subset $V(G)\setminus S$.

\begin{corollary}
\label{cor:linescolumnscoconvex}
Each line and each column of $G_{m\times n}$ is a co-convex set.
\end{corollary}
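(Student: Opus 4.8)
The plan is to apply Proposition~\ref{prop:squared} directly. The statement to prove is that each line $L_i$ and each column $C_j$ of $G_{m\times n}$ is a co-convex set, which by definition means that the complement $V(G)\setminus L_i$ (respectively $V(G)\setminus C_j$) is convex. By Proposition~\ref{prop:squared}, a subset is convex if and only if it is boxed, so it suffices to verify that removing a full row (or column) leaves a boxed set, i.e.\ a set whose every connected component of the induced subgraph is itself a grid.

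First I would treat the row case. Consider $S = V(G)\setminus L_i$ for some fixed $i\in\{1,\ldots,m\}$. Removing the $i$-th row splits the vertex set into those rows with index less than $i$ and those with index greater than $i$, and there are no edges between these two parts in $G[S]$ since all vertical edges crossing row $i$ have been destroyed. Thus $G[S]$ has (at most) two connected components: the subgrid $G_{(i-1)\times n}$ induced by rows $1,\ldots,i-1$, and the subgrid $G_{(m-i)\times n}$ induced by rows $i+1,\ldots,m$. Each of these is a grid (possibly empty when $i=1$ or $i=m$, in which case there is a single component), so $S$ is boxed and hence convex. The column case is entirely symmetric: removing $C_j$ leaves the two subgrids induced by columns $1,\ldots,j-1$ and $j+1,\ldots,n$, each of which is again a grid.

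Since in both cases the complement of the removed line or column is boxed, Proposition~\ref{prop:squared} gives that it is convex, and therefore $L_i$ and $C_j$ are co-convex by definition. This is essentially a one-step corollary of Proposition~\ref{prop:squared}, so there is no genuine obstacle; the only point requiring a small amount of care is the observation that deleting an entire row or column disconnects the grid into subgrids with no remaining edges between the two halves, which is immediate from the explicit edge set in the definition of $G_{m\times n}$. The boundary cases where the deleted line is the first or last row/column (yielding a single grid component rather than two) are handled uniformly by allowing empty subgrids.
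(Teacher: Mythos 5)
Your proposal is correct and follows exactly the paper's argument: the paper's proof is a one-line appeal to Proposition~\ref{prop:squared}, observing that $V(G_{m\times n})\setminus L_i$ and $V(G_{m\times n})\setminus C_j$ are boxed. You merely spell out the (correct) verification that the complement decomposes into at most two subgrids, which the paper leaves implicit.
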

\begin{proof}
It follows directly from Proposition~\ref{prop:squared} as $V(G_{m\times n})\setminus L_i$ and $V(G_{m\times n})\setminus C_j$ are boxed sets, for every $i\in\{1,\ldots,m\}$ and every $j\in\{1,\ldots,n\}$.
\end{proof}

Although we know how to describe precisely the (co-)convex sets of a grid, in order to determine the precise value of its hull number one might struggle to prove a lower bound by using such co-convex sets. We use a similar approach as the folklore one to compute the hull number of grids in the $P_3$-convexity that uses the notion of perimeter.

The idea is to consider the grid $G_{m\times n}$ as being the graph representation of the spaces on a chessboard $H$, i.e. each vertex of $G$ represents a space in $H$ and two vertices are adjacent if the spaces share a boundary. In what follows, we treat the grid as a plane graph, i.e., when considering the grid, we are also assuming a drawing in the plane.

Given a plane graph $G$ with face set $F=F(G)$, the \emph{dual} of $G$ is the plane graph $G^* = (F,E^*)$ where $ff'\in E^*$ if and only if there exists $e\in E(G)$ such that $f$ is the face in one side of $e$ and $f'$ is the other (note that $f$ and $f'$ might be the same, in which case we create loops in $G^*$). Observe that there is a bijection between $E(G^*)$ and $E(G)$. Hence, given $e\in E(G)$, we denote by $e^*$ the edge of $G^*$ related to $e$. Similarly, given an edge $e\in (G^*)$, we denote by $e^*$ the edge of $G$ related to $e$.

Now, let $H$ be an $(m+1)\times(n+1)$ grid and $G$ be its weak dual, i.e the graph obtained from the dual graph $G^*$ by the removal of the vertex corresponding to the outer face of $G$. Observe that $G$ is the $m\times n$ grid. Given a subset of edges $X\subseteq E(G)$, we denote by $E^*(X)$ the set of corresponding edges $\{e^*\in E(H)\mid e\in X\}$. Given
 a subset of vertices $S\subseteq V(G)$, we also define the \emph{boundary of $S$}, denoted by $B(S)$, as the set edges of $H$ that bounds $S$. More formally, $B(S)$ is the set of edges that belong to some face of $H-E(G[S])$ that contains some vertex of $S$. The \emph{perimeter of $S$} is then defined as $\per(S) = |B(S)|$.
 
 For instance, if $S$ contains a single vertex, then $\per(S)=4$, and if $S = V(G)$, then the perimeter of $S$ is equal to the number of edges in the outer face of $G$, i.e. $\per(S) = 2m+2n$. The following is the key lemma.

\begin{lemma}\label{lem:grid}
Let $H$ be an $(m+1)\times(n+1)$ grid, $G$ be its weak dual and $S\subseteq V(G)$. Then, $\per(\icc(S))\le \per(S)$ and $\per(\hull(S))\le \per(S) - 2c$, where $c$ is the number of  components of $G[S]$ minus the number of components of $G[\hull(S)]$. 
\end{lemma}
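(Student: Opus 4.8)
The plan is to interpret $\per$ geometrically and reduce everything to a one-vertex-at-a-time accounting. Since $G$ is the weak dual of the $(m+1)\times(n+1)$ grid $H$, each vertex $v\in V(G)$ corresponds to a unit square (a bounded face of $H$), and a set $S\subseteq V(G)$ corresponds to the union of the squares indexed by $S$; the quantity $\per(S)$ is exactly the total length of the boundary of this union, counting the outer boundary and the boundary of any holes. The first thing I would establish is the local effect of adding a single square: if $v\notin S'$ has exactly $k$ neighbours in $S'$ (equivalently, $k$ of the four edges of the square $v$ are shared with squares of $S'$), then only the four edges of $H$ incident to the square $v$ can change their boundary status, and a short check gives $\per(S'\cup\{v\}) = \per(S') + 4 - 2k$. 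The point is that the $k$ shared edges leave the boundary while the remaining $4-k$ edges enter it, and no other edge is affected, since an edge is a boundary edge if and only if exactly one of its two incident squares lies in the region.

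Next I would set up the sequential-addition order. Writing $A = \hull(S)\setminus S$, I would order its vertices $v_1,\dots,v_t$ by contamination step (the least $\ell$ with $v_i\in\iccp{\ell}(S)$), breaking ties arbitrarily, and set $S_0 = S$ and $S_i = S_{i-1}\cup\{v_i\}$. By monotonicity of $\icc$, since $v_i$ was contaminated from a subset of $S_{i-1}$, there is a cycle of $G[S_{i-1}\cup\{v_i\}]$ through $v_i$. The crucial consequence, and the heart of the argument, is that such a cycle provides two neighbours of $v_i$ joined by a path inside $G[S_{i-1}]$, hence lying in a common component of $G[S_{i-1}]$. So if $k_i$ denotes the number of neighbours of $v_i$ in $S_{i-1}$ and $j_i$ the number of distinct components of $G[S_{i-1}]$ meeting $N(v_i)$, then one component carries at least two neighbours and each of the others at least one, giving the key inequality $k_i \ge j_i + 1$ (in particular $k_i\ge 2$).

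Finally I would do the accounting. Part $\per(\icc(S))\le\per(S)$ is then immediate: taking $A=\icc(S)\setminus S$, the bound $k_i\ge 2$ gives $\per(S_i)-\per(S_{i-1}) = 4-2k_i\le 0$ at every step. For the second part, let $r_i$ be the number of components of $G[S_i]$; adding $v_i$ merges the $j_i$ components it touches into one, so $r_i - r_{i-1} = -(j_i-1)$. Combining with the perimeter step, $\bigl(\per(S_i)-\per(S_{i-1})\bigr) - 2\bigl(r_i - r_{i-1}\bigr) = (4-2k_i) + 2(j_i - 1) = 2(1 + j_i - k_i) \le 0$, where the last inequality is exactly $k_i\ge j_i+1$. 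Telescoping over $i=1,\dots,t$ yields $\per(\hull(S)) - \per(S) \le 2\bigl(r_t - r_0\bigr) = -2c$, which is the claim. The one place requiring care, and the main obstacle, is the cycle condition: if one only knew $k_i\ge 2$ one would get $k_i\ge j_i$ and the per-step quantity could equal $+2$; it is precisely the existence of a cycle, forcing two neighbours in the same component and hence $k_i\ge j_i+1$, that makes each component-merge pay for itself in perimeter. I would also verify the two routine points: that the one-at-a-time order is legitimate (each added vertex retains its cycle because $\icc$ is monotone under inclusion) and that the single-square perimeter formula is genuinely local.
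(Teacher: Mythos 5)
Your proof is correct and follows essentially the same route as the paper's: add the contaminated vertices one at a time, use the contaminating cycle to force at least two neighbours of the new vertex in a common component of the current set, and track the local perimeter change of $4-2k$ per added square. Your explicit telescoping with the inequality $k_i\ge j_i+1$ is in fact a tighter and more carefully justified version of the paper's brief closing remark that each component-merge forces at least three neighbours and hence a perimeter drop of at least two, ``at least $c$ times''.
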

\begin{proof}
If $\icc(S) = S$, then there is nothing to prove. So suppose otherwise and write $\icc(S)\setminus S$ as $\{v_1,\ldots,v_p\}$. We prove the first part by induction on $p$. Suppose it holds for $\ell-1$, i.e., that $\per(S') \le \per(S)$, where $S' = S\cup \bigcup_{i=1}^{\ell-1}\{v_i\}$, and let $x,y\in S$ be neighbors of $v_\ell$ in a same connected component of $G[S]$. Let $X = E(G[S])$, and note that each connected component of $G[S]$ is contained in the same face of $H-E^*(X)$. Indeed, if $uv\in E(G[S])$, then by removing the edge related to $uv$ from $H$, we are merging the faces of $H$ related to $u$ and $v$. Note also that, because $v_\ell\notin S'$, none of the edges related ot its incident edges is removed when computing the perimeter of $S'$, which means that $v_\ell$ is not inside the boundary of $S'$, and that $xv_\ell$ and $yv_\ell$ are counted in $\per(S')$. Now, let $S'' = S'\cup \{v_\ell\}$. We get that the edges related to $xv_\ell,yv_\ell$ are within $E^*(E(G[S'']))$ and hence are not in $B(S'')$. Therefore, we lose at least two edges in the boundary, and gain at most two edges, namely when the other two neighbors of $v_\ell$ are not in $S'$. 
Finally, note that if $v_\ell$ has at least~3 neighbors in $S'$, then the perimeter decreases by at least two, since we will have that at least 3 edges incident in $v_\ell$ are in the boundary of $S'$, and at most 1 is in the boundary of $S''$. Because the only way to connect the components of $G[S]$ in the hull of $S$ is if that happens at least $c$ times, the second part of the lemma follows.
\end{proof}

We are now ready to prove our theorem.

\begin{theorem}
Let $G$ be a $m\times n$ grid. Then, $\hncc(G) = m+n-1$.
\end{theorem}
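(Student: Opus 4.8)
The plan is to prove the two inequalities $\hncc(G)\le m+n-1$ and $\hncc(G)\ge m+n-1$ separately, after disposing of the degenerate cases. If $m=1$ or $n=1$, the grid is a path and hence a forest, so Corollary~\ref{cor:hulltrees} gives $\hncc(G)=n(G)=m+n-1$ immediately. I therefore assume $m,n\ge 2$ from now on, so that $G$ actually contains cycles.

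For the upper bound I would exhibit an explicit hull set of size $m+n-1$, namely $S=L_1\cup C_1$, the union of the first row and the first column. These share only the corner $(1,1)$, so $|S|=m+n-1$. To see $S$ is a hull set, process the remaining vertices in increasing order of $i+j$ and show each $(i,j)$ with $i,j\ge 2$ becomes contaminated. Indeed, the four vertices $(i-1,j-1)$, $(i-1,j)$, $(i,j-1)$, $(i,j)$ form a $4$-cycle of $G$, and the first three either lie in $S$ or have strictly smaller value of $i+j$, so they are already contaminated by the time $(i,j)$ is treated. Hence $(i,j)\in\icc(T)$, where $T$ is the set contaminated so far, and a straightforward induction yields $\hull(S)=V(G)$.

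The lower bound is where Lemma~\ref{lem:grid} does the work, through the perimeter. Let $S$ be any hull set and let $t$ be the number of components of $G[S]$. Since $\hull(S)=V(G)$ is connected with $\per(V(G))=2m+2n$, Lemma~\ref{lem:grid} gives $c=t-1$ and hence $2m+2n=\per(\hull(S))\le \per(S)-2(t-1)$. On the other hand, interpreting $S$ as a union of unit cells, every edge of $G[S]$ corresponds to an interior edge shared by two cells while every other cell edge is a boundary edge, so $\per(S)=4|S|-2\,e(G[S])$. Using $e(G[S])\ge |S|-t$ (a graph on $|S|$ vertices with $t$ components has at least $|S|-t$ edges), this gives $\per(S)\le 2|S|+2t$. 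Combining, $2m+2n\le (2|S|+2t)-2(t-1)=2|S|+2$, so $|S|\ge m+n-1$, as desired.

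The main obstacle I anticipate is the perimeter bookkeeping: confirming the identity $\per(S)=4|S|-2\,e(G[S])$ even when the cell-region of $S$ has holes (each hole contributes its own boundary edges, but these are precisely the non-shared cell edges and are therefore already counted), and correctly reading off the component count $c=t-1$ from Lemma~\ref{lem:grid} together with $\per(V(G))=2m+2n$. Once these are pinned down, both bounds follow, and the two together give $\hncc(G)=m+n-1$.
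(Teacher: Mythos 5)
Your proof is correct and follows essentially the same route as the paper's: the upper bound via the hull set consisting of the first row and first column, and the lower bound via Lemma~\ref{lem:grid} and the perimeter estimate $\per(S)\le 2|S|+2$ obtained from $\per(S)=4|S|-2|E(G[S])|$ and $|E(G[S])|\ge |S|-t$. The only differences are presentational (you spell out the induction on $i+j$ for the upper bound and separate out the degenerate path case), so there is nothing substantive to flag.
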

\begin{proof}
First, note that if $S$ contains the first row and first column of  vertices, then $\hull(S) = V(G)$; hence $\hncc(G)\le m+n-1$. To prove that this is also a lower bound, first we compute the maximum perimeter of a subset $S$. Let $X=E(G[S])$ and $\delta(S)$ denote the set of all edges incident to $S$. Observe that the edges in the boundary of $S$ are exactly the edges in $E^*(\delta(S))$ that are not in $E^*(X)$. Applying also Lemma~\ref{lem:grid}, and letting $c$ be the number of components of $S$, we get that, if $S$ is a hull set of $G$, then:
\[ \begin{array}{lll} \per(\hull(S)) & \le & (4|S| - 2|E^*(X)|) - 2 (c-1) \\
                  & =  & 4|S| - 2|E(G[S])| -2c+2 \\
                  & \le & 4|S|-2(|S|-c) -2c+2\\
                  & = & 2|S|+2.\end{array}\]

Now, since $\per(V(G)) = 2m+2n$, we know that if $S$ is a hull set of $G$, then \[2|S|+2\ge \per(\hull(S)) = \per(V(G)) = 2m+2n.\]
We then get $|S|\ge m+n-1$, as we wanted to prove.
\end{proof}


\section{Upper Bound for $\mathbf{4}$-Regular Planar Graphs}\label{sec:upperbound}

The idea to prove such bound is to iteratively apply operations in the input graph $G$ that ensure at each step the properties: the vertex set is reduced by at least two vertices; the size of a minimum hull set decreases by at at most one; the operations maintain the properties of being 4-regular and planar; if no operation can be applied, then the obtained graph $G'$ satisfies $\hncc(G') = \frac{1}{2}n(G')$.

The main result of this section concerns 4-regular planar graphs, but our first lemma actually hold for any 4-regular graph.


Let $G$ be any 4-regular graph. Let us introduce the reduction operations. 

\begin{itemize}

 \item[M4] Let $uv$ be such that $\mu(uv)=4$. Since $G$ is 4-regular, note that $G$ has component with vertex set $\{u,v\}$. Unless $V(G)=\{u,v\}$, remove $u$ and $v$ from $G$.






  \item[M2] Let $H$ be a component of the subgraph of $G$ containing exactly the edges of multiplicity 2. Because $G$ is 4-regular and $\mu(uv)=2$ for every $uv\in E(H)$, we get that $H$ is either a cycle or a path. In case it is a cycle, note that it is a component of $G$. Remove all vertices in $V(H)$ from $G$.
  In case $H$ is a path between vertices $u$ and $v$ such that $N_{G-H}(u) = \{x,x'\}$ and $N_{G-H}(v) = \{y,y'\}$. Then add edges $xx'$ and $yy'$ (see Figure~\ref{fig:M2}).

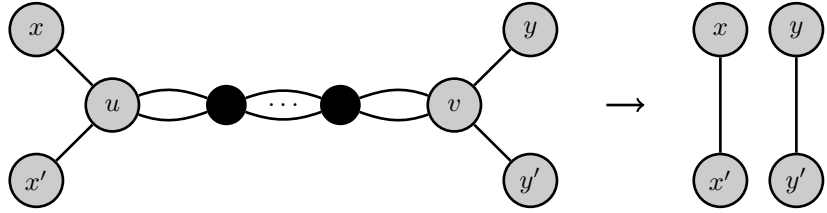
\begin{figure}[thb]
\centering
  \begin{tikzpicture}[scale=1]
  \pgfsetlinewidth{1pt}
  \tikzset{vertex/.style={circle, minimum size=0.7cm, fill=black!20, draw, inner sep=1pt}}
  \tikzset{simple/.style={circle, minimum size=0.5cm, fill=black, draw, inner sep=1pt}}

  \node [vertex] (u) at (-1,1){$u$};
  \node [vertex] (v) at (3.5,1){$v$};
  \node [vertex] (x) at (-2,2){$x$};
  \node [vertex] (xp) at (-2,0){$x'$};
  \node [vertex] (y) at (4.5,2){$y$};
  \node [vertex] (yp) at (4.5,0){$y'$};
  \node [simple] (v1) at (0.5,1) {};
  \node at (1.25,1) {$\ldots$};
  \node [simple] (v2) at (2,1) {};
   \draw [-] (u) to [out=20,in=160] (v1)
   		(u) to [out=-20,in=-160] (v1)
   		(v1) to [out=20,in=160] (v2)
   		(v1) to [out=-20,in=-160] (v2)
   		(v2) to [out=20,in=160] (v)
   		(v2) to [out=-20,in=-160] (v);
  \draw (xp)--(u)--(x) (yp)--(v)--(y);
  
  \path [->] (5.5,1) edge node {} (6,1);

  \node [vertex] (xn) at (7,2){$x$};
  \node [vertex] (xpn) at (7,0){$x'$};
  \node [vertex] (yn) at (8,2){$y$};
  \node [vertex] (ypn) at (8,0){$y'$};
   \draw (xn)--(xpn) (yn)--(ypn);
  \end{tikzpicture}

\caption{Operation M2 in case $H$ is a path (if $H$ is a cycle, we just remove $V(H)$).}
\label{fig:M2}
\end{figure}

  \item[M3] Let $uv\in E(G)$ be such that $\mu(uv) = 3$. Also let $x\in N(u)\setminus\{v\}$ and $y\in N(v)\setminus \{u\}$. If $x\neq y$, remove vertices $u$ and $v$ and add edge $xy$ to $G$ (see Figure~\ref{fig:M3xneqy}). Otherwise, let $N(x)\setminus\{u,v\} = \{z,z'\}$. Remove vertices $u,v,x$ and add edge $zz'$ (see Figure~\ref{fig:M3xeqy}). 
  
  \begin{figure}[thb]
  \centering
  \begin{tikzpicture}[scale=1]
  \pgfsetlinewidth{1pt}
  \tikzset{vertex/.style={circle, minimum size=0.7cm, fill=black!20, draw, inner sep=1pt}}
  \node [vertex] (u) at (-1,1){$u$};
  \node [vertex] (v) at (1,1){$v$};
  \node [vertex] (x) at (-2,1){$x$};
  \node [vertex] (y) at (2,1){$y$};
   \draw [-] (u) to [out=30,in=150] (v)
   		(u) to [out=-30,in=-150] (v)
   		(u) to [out=0,in=180] (v)
   		(x) to (u)
   		(y) to (v); 		

\path [->] (3,1) edge node {} (3.5,1);
  \node [vertex] (x) at (4.5,1){$x$};
  \node [vertex] (y) at (5.5,1){$y$};
   \draw [-]  (x) to (y);  		
  \end{tikzpicture}

\caption{Operation M3 when $x\neq y$.}
\label{fig:M3xneqy}
\end{figure}
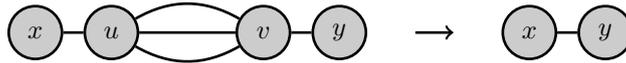

\begin{figure}[thb]
\centering
  \begin{tikzpicture}[scale=1]
  \pgfsetlinewidth{1pt}
  \tikzset{vertex/.style={circle, minimum size=0.7cm, fill=black!20, draw, inner sep=1pt}}
  \node [vertex] (u) at (1,-1){$u$};
  \node [vertex] (v) at (1,1){$v$};
  \node [vertex] (x) at (-0.5,0){$x$};
    \node [vertex] (z) at (-1.5,-0.5){$z$};
        \node [vertex] (zp) at (-1.5,0.5){$z'$};
   \draw [-] (u) to [bend left] (v)
   		(u) to (v)
   		(u) to [bend right] (v);
  \draw (z)--(x)--(zp) (u)--(x)--(v); 
  
  \path [->] (2,0) edge node {} (2.5,0);
  
  \node [vertex] (zn) at (3.5,-0.5){$z$};
  \node [vertex] (zpn) at (3.5,0.5){$z'$};
   \draw [-]  (zn) to (zpn);		
  \end{tikzpicture}

\caption{Operation M3 when $x= y$.}
\label{fig:M3xeqy}
\end{figure}
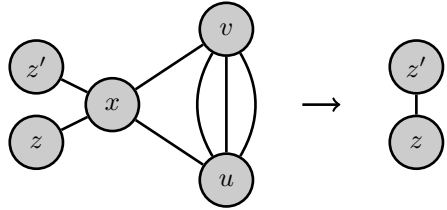

   \item[T] Let $T=(x,y,z)$ be any cycle of length~3 in $G$, and let $N_{G\setminus T}(x) = \{x_1,x_2\}$, $N_{G\setminus T}(y) = \{y_1,y_2\}$, and $N_{G\setminus T}(z) = \{z_1,z_2\}$. Remove $V(T)$ from $G$, add an edge $x_1x_2$, a vertex $w$ and edges $wy_1,wy_2,wz_1,wz_2$ to $G$ (see Figure~\ref{fig:T}).
   
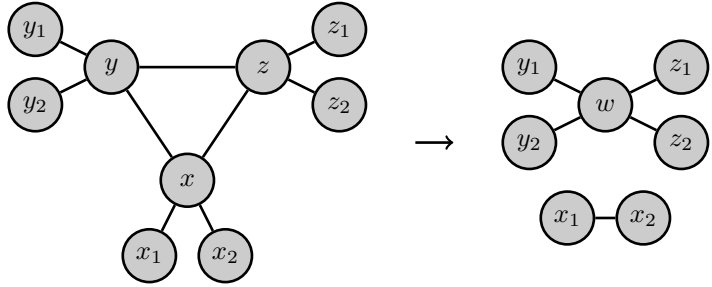
\begin{figure}[thb]
\centering
  \begin{tikzpicture}[scale=1]
  \pgfsetlinewidth{1pt}
  \tikzset{vertex/.style={circle, minimum size=0.7cm, fill=black!20, draw, inner sep=1pt}}
  \node [vertex] (y) at (-1,1){$y$};
  \node [vertex] (z) at (1,1){$z$};
  \node [vertex] (x) at (0,-0.5){$x$};
   \node [vertex] (y1) at (-2,1.5){$y_1$};
   \node [vertex] (y2) at (-2,0.5){$y_2$};
   \node [vertex] (z1) at (2,1.5){$z_1$};
   \node [vertex] (z2) at (2,0.5){$z_2$};
   \node [vertex] (x1) at (-0.5,-1.5){$x_1$};
   \node [vertex] (x2) at (0.5,-1.5){$x_2$};
   \draw (x)--(y)--(z)--(x) (y1)--(y)--(y2) (z1)--(z)--(z2) (x1)--(x)--(x2);
  
  \path [->] (3,0) edge node {} (3.5,0);
  \node [vertex] (w) at (5.5,0.5){$w$};
   \node [vertex] (y1) at (4.5,1){$y_1$};
   \node [vertex] (y2) at (4.5,0){$y_2$};
   \node [vertex] (z1) at (6.5,1){$z_1$};
   \node [vertex] (z2) at (6.5,0){$z_2$};
   \node [vertex] (x1) at (5,-1){$x_1$};
   \node [vertex] (x2) at (6,-1){$x_2$};
   \draw (x1)--(x2) (y1)--(w)--(y2) (z1)--(w)--(z2);
  \end{tikzpicture}

\caption{Operation T.}
\label{fig:T}
\end{figure}
\end{itemize}

\begin{lemma}
\label{lem:operations4regular}
Let $G$ be a 4-regular graph and $F$ a graph obtained from $G$ by the application of one of the operations M4, M2, M3 or T. Then, $F$ is 4-regular and $\hncc(G)\le \hncc(F)+1$.
\end{lemma}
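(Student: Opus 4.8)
The plan is to prove both assertions one operation at a time. The $4$-regularity of $F$ is in each case a routine degree count: operations M4 and the cyclic case of M2 delete entire connected components, so the rest stays $4$-regular; in the path case of M2, in M3, and in T, every vertex that loses an edge to a deleted vertex gains exactly one edge among the newly added edges (so its degree is preserved), while the new vertex $w$ of T receives exactly the four edges $wy_1,wy_2,wz_1,wz_2$. One must also check the degenerate coincidences among $x_1,x_2,y_1,y_2,z_1,z_2$ (and the case where an added edge is already present and merely increases a multiplicity); these neither spoil $4$-regularity nor affect the bound. For the inequality $\hncc(G)\le\hncc(F)+1$, I would fix a minimum hull set $S_F$ of $F$ and build a hull set $S_G$ of $G$ with $|S_G|\le|S_F|+1$. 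The engine is a \emph{lifting} principle: each operation deletes a small subgraph $D$ of $G$ (a path of double edges, a multiple edge, or a triangle) and replaces it by the added edges and, in T, by $w$. A cycle of $F$ that traverses an added edge corresponds to a cycle of $G$ rerouted through the vertices of $D$: an edge $xy$ in M3 (resp.\ $xx'$ in M2) is replaced by the length-two path through the deleted vertex, the edge $x_1x_2$ in T by $x_1xx_2$, and a transit $a\,w\,b$ in T by the matching path through $y$ and/or $z$ along the triangle. Hence, once all of $V(D)$ is contaminated in $G$, an induction on the contamination steps of $F$ shows that every vertex contaminated in $F$ is contaminated in $G$; since $V(F)\setminus V(D)=V(G)\setminus V(D)$, this gives $\hull(S_G)=V(G)$.

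The task then reduces to contaminating $V(D)$ in $G$ cheaply, and this is exactly why M4, M2 and M3 are the easy cases: the removed gadget $D$ contains a multiple edge, hence a $2$-cycle, so a single contaminated vertex of $D$ contaminates all of $D$ through a chain of $2$-cycles. I would therefore take $S_G=S_F\cup\{p\}$ for one vertex $p\in V(D)$ (and for the component deletions M4 and the cyclic M2, one seed likewise contaminates the entire removed component), after which the lifting above percolates $S_G$ over all of $G$.

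The genuine difficulty is operation T, because the removed gadget is a triangle, i.e.\ a $3$-cycle, which is \emph{not} self-contaminating from a single seed: two of $x,y,z$ must already be contaminated before the third follows via the triangle. I would split on how $w$ behaves in $F$. If $w\in S_F$, set $S_G=(S_F\setminus\{w\})\cup\{y,z\}$; this has size $|S_F|+1$, the seeds $y,z$ immediately contaminate $x$ through the triangle, and the general lifting finishes. If $w\notin S_F$, then, since $S_F$ is a hull set, $w$ is contaminated in $F$ through a cycle using two of its stub-edges, say to $a,b\in\{y_1,y_2,z_1,z_2\}$, joined by a contaminated $w$-avoiding path $P$; crucially, the \emph{first} such contamination of $w$ occurs via a path $P$ that does not use $w$, so lifting $P$ to $G$ requires only that $x$ be seeded. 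I would then seed one suitable triangle vertex and let $P$ close up a triangle-contaminating cycle.

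The part I expect to demand the most care is the \emph{cross-side} configuration of this last case, namely $a\in\{y_1,y_2\}$ and $b\in\{z_1,z_2\}$. Here a single $y_1$–$z_1$ connection plus one seed does not by itself force two contaminated triangle vertices, and the auxiliary edge $x_1x_2$ — whose lifting wants $x$ contaminated — interacts awkwardly with the wish to seed $y$ or $z$ instead. To resolve it I would exploit connectivity of $F-w$: for $w$ to be contaminable at all, at least two of its stubs must lie in a common component of $F-w$ (otherwise no two stubs could ever be joined by a $w$-avoiding path and $w$ would never percolate, contradicting that $S_F$ is a hull set). Tracking the first \emph{same-side} such pair that becomes connected, one obtains after lifting (using only the seed $x$) a same-side cycle through $y$ or through $z$, contaminating one triangle vertex and then the whole triangle. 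Making this bootstrapping fully rigorous — in particular ruling out that every $w$-avoiding connection is genuinely cross-side, and ordering the percolation so that the path used is already contaminated in $G$ without circularly needing $y$ or $z$ — is the delicate step, and is where I would concentrate the detailed argument.
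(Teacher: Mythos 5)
Your treatment of M4, M2 and M3, and of the $w\in S_F$ and ``same-side'' subcases of T, is sound and matches the paper's argument (seed one vertex of the removed gadget, let the multiple edges contaminate the rest, and lift cycles of $F$ to cycles of $G$). The gap is exactly where you place it, but your proposed repair for the cross-side case of T cannot work. You want to rule out that every $w$-avoiding connection between the stubs is cross-side by ``tracking the first same-side pair that becomes connected''; such a pair need not exist. Take a $4$-regular $G$ in which deleting the triangle $\{x,y,z\}$ leaves two components, one containing $y_1,z_1$ and the other containing $y_2,z_2,x_1,x_2$ (for instance $K_5$ minus the edge $y_1z_1$ on one side, and a $4$-regular graph minus two disjoint edges $y_2z_2$ and $x_1x_2$ on the other). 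Then $F-w$ has these same two components, the new edge $x_1x_2$ lies inside the second, each component receives exactly two stubs from $w$, and consequently \emph{every} cycle of $F$ through $w$ pairs some $y_i$ with some $z_j$. The cross-side configuration can be unavoidable, so it cannot be bootstrapped away.

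The resolution is different and, in part, simpler than you fear. In the cross-side case the lifted cycle $C'$ contains the segment $(y_i,y,z,z_j)$, so it carries exactly two triangle vertices; seeding \emph{one} of them, say $y$, leaves $z$ as the unique uncontaminated vertex of $C'$, which contaminates $z$ and then $x$ via the triangle $(x,y,z)$. The only genuine obstruction --- the one you correctly sense --- is that the single extra seed might instead be needed at $x$, to lift contaminations occurring before step $k$ that use the edge $x_1x_2$. The paper disentangles this by first invoking Proposition~\ref{prop:nocriticalvertex} to assume $w\notin S$ (your $w\in S_F$ case handles this differently but equivalently), and then casing on when $x_1$ and $x_2$ are contaminated relative to $w$: if neither is contaminated before $w$, no cycle through $x_1x_2$ is usable up to step $k$, so seeding $y$ (or $z$) is safe; if $x_1$ is contaminated before $w$ and the cycle $C$ contaminating $w$ passes through $x_1$, one seeds $x$ instead and replaces the $w$--$x_1$ subpath of $C$ beginning with the stub $wy_1$ by $(y_1,y,x,x_1)$, obtaining a cycle in $G$ whose unique uncontaminated vertex is $y$; the remaining subcases reduce to these. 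Some case analysis of this kind on the contamination times of $x_1$ and $x_2$ is what your proof needs; the same-side reduction cannot substitute for it.
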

\begin{proof}
In this proof, for all the cases, we use the same notation to vertices and components as in the definitions of the corresponding operations.

Note that all operations preserve the property of being 4-regular. Consequently, we just prove the upper bound on $\hncc(G)$. Thus, let $S$ be a minimum hull set of $F$. We analyse the cases below.

Consider first that operation M4 was applied on edge $uv\in E(G)$ to obtain $F$. 
Let $S'= S\cup\{u\}$. Since $S$ is a hull set of $H$, note that $V(F)\subseteq \hull(S')$. Moreover, due to the multiple edges linking $u$ and $v$, we get that $v\in \icc(S')\subseteq \hull(S')$. Consequently, $S'$ is a hull set of $G$ and $\hncc(G)\leq |S'| = |S| +1 = \hncc(F)+1$.

Suppose now that $F$ is obtained from $G$ by operation M2. In case the component $H$ corresponds to a cycle, let $u$ be an arbitrary vertex in that cycle. Otherwise, let it be an endpoint of $H$ as in the definition of M2. Again, let ${S'= S\cup\{u\}}$.
Since $H$ only has multiple edges and $u\in S'$, note that $V(H)\subseteq \hull(S')$. In case $H$ is a cycle, it is a component of $G$ and ${V(G)\setminus V(H)}\subseteq \hull(S)\subseteq \hull(S')$. Consequently, $S'$ is a hull set of $G$ and thus $\hncc(G)\leq \hncc(F)+1$. Suppose now that $F$ is obtained by applying M2 on $G$, by considering that the removed subgraph $H$ is a path. Note that any cycle in $F$ that uses the edge $xx'$ (resp. $yy')$ has a corresponding cycle in $G$ containing the path $(x,u,x')$ (resp. $(y,v,y')$). Consequently, since $S$ is a hull set of $F$ and $\{u,v\}\subseteq V(H)\subseteq \hull(S')$, we deduce that $S'$ is a hull set of $G$.

The case in which $F$ is obtained from $G$ by an application of operation M3 is similar to the previous one. In both cases, we consider $S' = S\cup \{u\}$. Note that if $x\neq y$, then $v\in \hull(S')$ because of the multiplicity of edge $uv$. And if $x=y$ (see Figure~\ref{fig:M3xneqy}), we have that $\{x,v,u\}\in \hull(S')$ due to the fact that $u\in S'$ and that any cycle in $F$ using the edge $zz'$ has a corresponding one in $G$ passing through $x$. Thus, in both cases, $S'$ is a hull set of $G$, and $\hncc(G)\leq \hncc(F)+1$.

Finally, suppose that $F$ is obtained by applying operation T on a triangle $(x,y,z)$. Let us first argue that the new vertex $w\in V(F)$ must lie in some cycle of $F$. If not, then observe that $w$ must have four distinct neighbors and any two neighbors of $w$ must belong to distinct connected components of $F-w$. Since $F$ is 4-regular, note that each of these four connected components of $F-w$ has exactly one vertex with degree three (the neighbor of $w$) and all the other vertices have degree four, contradicting the fact that the sum of degrees of vertices in each connected component must be even.
Therefore, by Proposition~\ref{prop:coconvex_singleton}, we get that $\{w\}$ is not a co-convex set of $F$, and by Proposition~\ref{prop:nocriticalvertex}, we can suppose, w.l.o.g., that $S$ is a minimum hull set of $F$ such that $w\notin S$.

Before we proceed, observe that any cycle in $F$ containing vertices $w$, $y_1$, $y_2$, $z_1$, $z_2$, $x_1$ and $x_2$ has a corresponding one in $G$ such that: if the cycle contains the path $(y_1, w,y_2)$ (resp. $(z_1, w,z_2)$) we just replace $w$ by $y$ (resp. $z$); if it contains a path $(y_i,w,z_j)$, for some $i,j\in\{1,2\}$ then we replace $w$ in the path by $(y,z)$; and if it uses the edge $x_1x_2$, then we add $x$ to such cycle to obtain a corresponding one in $G$.

Recall that $w\notin S$, and let $k\ge 1$ be the step in which $w$ is contaminated by $S$ in $F$. It means that there exists a cycle $C$ in $F$ such that $C-w\subseteq \iccp{k-1}(S)$. Let $u$ and $v$ be the neighbors of $w$ in $C$, and let $C'$ denote the corresponding cycle in $G$ obtained as in the previous paragraph. We analyse two cases by considering whether $w$ is contaminated by $S$ in $F$ either before or after $x_1$ and $x_2$. In both cases, by adding one of the vertices $\{x,y,z\}$ to $S$, we obtain a hull set $S'$ of $G$, thus satisfying that $\hncc(G)\leq |S'|=|S|+1 = \hncc(F)+1$. 

\begin{itemize}
\item Neither $x_1$ nor $x_2$ is contaminated by $S$ in $F$ up to step $k-1$: 
this implies that $C'\setminus \{y,z\}\subseteq \iccp{k-1}(S)$. If $\{u,v\} = \{y_1,y_2\}$, let $S' = S\cup \{z\}$, and if $\{u,v\} = \{z_1,z_2\}$, let $S' = S\cup \{y\}$. Observe that in the former case, we get that $C'$ is a cycle containing $y$ and not $z$, which implies that $y\in \iccp{k}(S)$. A similar argument holds for the latter case, thus giving us that $\{y,z\}\subseteq \iccp{k}(S')$. And if $\{u,v\} = \{y_i,z_j\}$ for some $i,j\in\{1,2\}$, let $S' = S\cup \{y\}$ (we could also add $z$ instead). Because $C'$ in this case is a cycle containing both $y$ and $z$, and $C'\setminus \{z\}\subseteq \iccp{k-1}(S')$, we again get that $\{y,z\}\subseteq \iccp{k}(S')$.

Therefore we also get $x\in\iccp{k+1}(S')$. Since $S$ is a hull set of $F$, and because the vertices in the triangle $(x,y,z)$ can be used in the cycles of $G$ corresponding to the cycles in $F$ as we previously mentioned, we get that $S'$ is a hull set of $G$.

\item $x_1$ is contaminated by $S$ in $F$ at step $p_1\leq k-1$: 
if $x_2$ is also contaminated by $S$ in $F$ before $w$, then note that $x\in\iccp{k}(S)$. Consequently, we can chose $S'$ as in the previous case and a similar argument holds. Thus, consider that $x_2$ is contaminated at step $p_2\geq k$.

Since $w,x_2\notin \iccp{k-1}(S)$, observe that the vertices contaminated by $S$ in $F$ are also contaminated by the same subset $S$ in $G$ up to step $k-1$. If the cycle $C$ (used to contaminate $w$) does not contain $x_1$, then again we chose $S'$ as in the previous case and a similar argument holds.

Otherwise, let $S'=S\cup\{x\}$. Recall that the unique vertex of $C$ that does not belong to $\iccp{k-1}(S)$ is $w$. Note that $C$ is composed of two $w,x_1$-paths $P_1$ and $P_2$ and suppose that $u\in V(P_1)$ and $v\in V(P_2)$. Suppose that $u=y_1$ (the other cases are similar). If we replace $P_1$ by the path $(y_1,y,x,x_1)$, note that we obtain a cycle $C'$ such that $y$ is the unique vertex of $C'$ that might not belong to $\iccp{k-1}(S')$. Consequently, $y\in \iccp{k}(S')$ and $z\in \iccp{k+1}(S')$. Again, since $S$ is a hull set of $F$, we may use the corresponding cycles to deduce that $S'$ is a hull set of $G$.
\end{itemize}
\end{proof}

Now suppose that $G$ is a 4-regular plane graph.
Let $G'$ be obtained from $G$ by exhaustively applying operations M4, M2, M3, and T, with this priority order. It means that, at each step, we just apply M2 if M4 cannot be applied; we just apply M3 if M4 and M2 cannot be applied; and we just apply T if M4, M2 and M3 cannot be applied. Because $G$ is finite and since each operation decreases the number of vertices of the obtained graph, we get that this process finishes. We call $G'$ a \emph{reduction graph of $G$} and we say that $G$ \emph{can be reduced} to $G'$. 

\begin{lemma}\label{lem:Gprime}
Let $G$ be a 4-regular plane graph and $G'$ be its reduction graph. Then, $G'$ is the 4-regular graph on two vertices.
\end{lemma}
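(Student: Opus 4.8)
The plan is to characterise the graphs on which \emph{no} reduction operation can be applied, and to show that—among $4$-regular plane graphs—the only such graph is the $4$-regular graph on two vertices; the termination condition then forces $G'$ to be exactly this graph. By Lemma~\ref{lem:operations4regular} every operation keeps the graph $4$-regular, so $G'$ is $4$-regular, and I would first record that every operation also keeps the graph plane: M4 and M2 applied to a cycle merely delete a whole component, while the edges created by the other operations ($xx'$ and $yy'$ in M2 applied to a path, $xy$ or $zz'$ in M3, and $x_1x_2$ in T) can be drawn through the region freed by the deleted vertices. Hence $G'$ is a $4$-regular plane graph to which none of M4, M2, M3, T applies.

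I would then read off the structure of $G'$ from the fact that it is terminal. In a loopless $4$-regular graph every edge has multiplicity in $\{1,2,3,4\}$. An edge of multiplicity $4$ triggers M4 \emph{unless} $G'$ consists of just its two endpoints, in which case $G'$ is already the claimed graph; an edge of multiplicity $3$ triggers M3; and an edge of multiplicity $2$ lies in a path or cycle component of the multiplicity-$2$ subgraph and triggers M2. Therefore, unless $G'$ is the two-vertex graph, $G'$ is \emph{simple}. Finally, since T applies to any $3$-cycle, a simple terminal $G'$ contains no triangle; being simple and triangle-free, it has girth at least $4$.

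It remains to rule out a nonempty simple triangle-free $4$-regular plane graph, which I would do with Euler's formula. Put $n=n(G')$, $m=m(G')$, $f=|F(G')|$, and let $c\ge 1$ be the number of components of $G'$. Four-regularity gives $2m=4n$, so $m=2n$; girth at least $4$ forces every face to have length at least $4$, and since the face lengths sum to $2m$ this gives $4f\le 2m$, i.e.\ $f\le n$; and Euler's formula gives $n-m+f=1+c$. Substituting, $1+c=f-n\le 0$, which is impossible since $c\ge 1$. Hence no such graph exists, and $G'$ must be the $4$-regular graph on two vertices.

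I expect the main obstacle to be the planarity of operation T. Unlike the others it introduces a new vertex $w$ with four incident edges $wy_1,wy_2,wz_1,wz_2$ together with the chord $x_1x_2$, so one must check these can be inserted without crossings. I would argue this from the rotation system: the six half-edges leaving the triangle $(x,y,z)$ occur in a fixed cyclic order, and the triangle-vertex to be short-circuited should be chosen so that its two external edges are consecutive in that order, letting $x_1x_2$ be drawn in the vacated triangular region while $w$ receives $y_1,y_2,z_1,z_2$ in planar position; the priority ordering, which guarantees that T is applied only once all multiple edges have been eliminated, keeps the degenerate configurations (coincident external neighbours) under control. One should also confirm that the reduction never deletes the last remaining component, so that the Euler count indeed applies to a nonempty $G'$.
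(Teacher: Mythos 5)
Your overall strategy coincides with the paper's: show the operations preserve $4$-regularity and planarity, argue that a terminal graph is either the two-vertex multigraph or a simple, triangle-free, $4$-regular plane graph, and rule out the latter with Euler's formula. Your Euler count (carried out directly with $c$ components via $n-m+f=1+c$, rather than restricting to a single component as the paper does) is fine, and your terminal-graph case analysis matches the paper's.

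The one place where you genuinely diverge --- and where your argument has a gap --- is the planarity of operation T, precisely the point you flag as the main obstacle. Your rotation-system sketch assumes the six external half-edges of the triangle $(x,y,z)$ occur in a common cyclic order around a ``vacated triangular region,'' which is only clear when the triangle bounds a face; but T is defined for \emph{any} $3$-cycle, and for a separating triangle (with vertices or edges embedded inside it) the region you want to route $x_1x_2$ and $w$ through does not exist as described, and the freedom to choose which vertex plays the role of $x$ is not available since the roles are fixed by the operation. The paper avoids all embedding bookkeeping by observing that the output of T is a minor of $G$: contract $yz$ into $w$, delete the two resulting $wx$ edges, and suppress the degree-two vertex that $x$ has become (contract $xx_1$, creating the edge $x_1x_2$); planarity then follows from minor-closedness. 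A second, smaller omission: your terminal analysis tacitly assumes $G'$ is loopless (``every edge has multiplicity in $\{1,2,3,4\}$,'' ``girth at least $4$''), but the operations add edges ($xx'$, $yy'$, $xy$, $zz'$, $x_1x_2$) whose endpoints could a priori coincide, and none of the four operations applies to a loop, so a loop in a terminal graph would defeat both your multiplicity case analysis and the face-length bound. The paper checks this explicitly operation by operation using the priority order (e.g.\ $z=z'$ in M3 would force a multiplicity-two edge at $x$, so M2 would have been applied first); your remark about ``degenerate configurations'' points in the right direction but needs to be carried out for each added edge.
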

\begin{proof}
First, note that operations M4, M2, M3 and T not only preserve the property of being 4-regular, as claimed in Lemma~\ref{lem:operations4regular}, but also planarity. While this can be easily verified for operations M4, M2 and M3, the case of operation T is a bit less trivial if the triangle $(x,y,z)$ is not a face of $G$. However, note that operation $T$ can be seen as we first contract the edges $yz$, $x_1x$ and $x_2x$ and then we remove multiple edges that might appear. Thus, operation T constructs a minor of $G$, which must be planar since planar graphs are minor-closed. So, $G'$ is a planar 4-regular graph.

Note also that, unless $G'$ has exactly two vertices (because of operation M4), $G'$ cannot have any edges of multiplicity bigger than~1, as otherwise we could apply operations M4, M2 or M3. Moreover, $G'$ cannot have any cycle of length three, as otherwise we could still apply operation $T$. Consequently, note that any planar embedding of $G'$ has no face of degree two or three.

We also argue that these operations cannot create loops, i.e. that any planar embedding of $G'$ cannot have faces of degree one. Recall that we start from a loopless graph $G$. Operation M4 clearly does not create any loop, since it does not add any edges. Note that operation M2 adds edges $xx'$ and $yy'$ that cannot be loops because, otherwise, $ux$ or $vy$ would have multiplicity two and, therefore, they should also be in $H$. The case of applying operation M3, when $x=y$ is the only case in which we could have added a loop within this operation, but then we note that $z\neq z'$ as, otherwise, $xz$ would be an edge of multiplicity two and we should apply M2 instead, because of the priority order. A similar argument is applied to see that $x_1x_2$ is not a loop in operation T.

We then get that either $G'$ has exactly two vertices linked by an edge of multiplicity four or it must be a simple 4-regular planar graph with no cycles of length three. Let us argue that the latter case may not happen due to Euler's Equation.

By contradiction, suppose that the reduction graph $G'$ is a simple 4-regular planar graph having no cycles of length three. Let $C$ be a planar embedding of a connected component of $G'$. Consequently, note that $C$ is a connected simple 4-regular plane graph having no face of degree three.

So, let $n=\lvert V(C)\rvert$, $m=\lvert E(C)\rvert$, $f$ denote the number of faces of $C$, and let $f_i$ denote the number of faces of degree $i$ in $C$. Since $C$ is 4-regular, we have that $4n=\sum_{v\in V(C)} d(v) = 2m$. Then, we can use Euler's Equation and the fact that $C$ has no faces of degree one, two or three, to deduce that:
\[n+f-m = 2 \Rightarrow n = f - 2 = \left(\sum_{i\ge 1}f_i\right) - 2= \left(\sum_{i\ge 4}f_i\right) - 2.\]
	
Since each edge is counted twice when summing up the degrees of faces of $C$, we can use the previous equation to deduce that:
\[\sum_{i\ge 4}(i\cdot f_i) = 2m = 4n = 4\left(\sum_{i\ge 4}f_i\right) - 8.\]
This is a contradiction as:
\[\sum_{i\ge 4}(i\cdot f_i)>4\left(\sum_{i\ge 4}f_i\right) - 8.\]
\end{proof}

We are now ready to present our main result.

\begin{theorem}\label{theo:4regular}
If $G$ is a 4-regular planar graph, then:
\[\hncc(G)\le \frac{1}{2}\lvert V(G)\rvert.\] Furthermore, this bound is tight.
\end{theorem}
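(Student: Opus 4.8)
The plan is to combine the reduction machinery already developed (Lemmas~\ref{lem:operations4regular} and~\ref{lem:Gprime}) with a base case on the terminal graph. First I would invoke Lemma~\ref{lem:Gprime}: exhaustively applying the operations M4, M2, M3, T in the prescribed priority order reduces any $4$-regular plane graph $G$ to the graph $G'$ consisting of exactly two vertices joined by an edge of multiplicity four. Each individual operation removes at least two vertices from the current graph (M4 and M3-when-$x\neq y$ remove two vertices, M3-when-$x=y$ and T remove three, and M2 removes an entire path or cycle of length at least one), so if the reduction proceeds through a sequence of operations, I would track the total vertex decrease against the bound on the hull number.

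The key inequality is that each operation drops $\hncc$ by at most one while dropping $n$ by at least two. Concretely, Lemma~\ref{lem:operations4regular} gives $\hncc(G)\le\hncc(F)+1$ whenever $F$ is obtained from $G$ by a single operation. So if $G=G_0, G_1,\dots,G_r=G'$ is the reduction sequence, then telescoping yields $\hncc(G)\le\hncc(G')+r$, while $n(G)\ge n(G')+2r$ since every step removes at least two vertices. For the terminal graph $G'$ on two vertices with a quadruple edge, the hull number is $1$: picking either vertex $u$, the other vertex $v$ lies on a cycle (indeed on several $2$-cycles) all of whose vertices except $v$ lie in $\{u\}$, so $v\in\icc(\{u\})$ and $\{u\}$ is a hull set, while no set can be smaller than a singleton; hence $\hncc(G')=1=\frac{1}{2}n(G')$. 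Substituting, I would compute
\[\hncc(G)\le\hncc(G')+r = 1+r = \tfrac{1}{2}n(G')+r \le \tfrac{1}{2}\bigl(n(G)-2r\bigr)+r = \tfrac{1}{2}n(G),\]
which is exactly the desired bound.

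One subtlety I would be careful about is disconnectedness: operations M4 (when $V(G)\neq\{u,v\}$) and M2 (cycle case) delete whole components, so the claim ``each step removes at least two vertices'' must be verified to still yield $\hncc(G)\le\hncc(F)+1$ in those cases, which is precisely what Lemma~\ref{lem:operations4regular} already covers. I would also note that Lemma~\ref{lem:Gprime} is stated for the fully reduced $G'$ on two vertices, and I am implicitly assuming the reduction terminates at a connected two-vertex graph; if $G$ has several components the argument applies componentwise and the bounds add, preserving the inequality. The main obstacle, however, is not the reduction inequality itself but the \textbf{tightness} claim. For tightness I would exhibit an explicit infinite family of $4$-regular planar (multi)graphs achieving $\hncc(G)=\frac{1}{2}n(G)$; the natural candidate is the ``ring'' of digons or the chain of $2$-connected blocks built from multiple edges, mirroring the structure that forces M2 to fire repeatedly, where each reduction step loses exactly one vertex in the hull number and exactly two in the order, so no slack accumulates. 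Verifying the matching lower bound $\hncc(G)\ge\frac{1}{2}n(G)$ for this family is the delicate part: I would do it by displaying, for each such $G$, a family of pairwise ``independent'' co-convex singletons or co-convex sets (via Propositions~\ref{prop:coconvex_singleton} and~\ref{prop:co-convex}) that forces half the vertices into every hull set, so that the transversal-of-co-convex-sets viewpoint gives the required lower bound.
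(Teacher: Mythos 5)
Your upper-bound argument is exactly the paper's: telescope Lemma~\ref{lem:operations4regular} along the reduction sequence guaranteed by Lemma~\ref{lem:Gprime}, each step losing at least two vertices and at most one unit of hull number, with base case $\hncc(G')=1=\tfrac{1}{2}n(G')$ for the terminal two-vertex graph. That part is correct and needs no change.

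The gap is in the tightness discussion. First, tightness as stated is already witnessed by the two-vertex graph with a quadruple edge, which you computed has $\hncc=1=\tfrac{1}{2}n$; you do not need an infinite family for the claim as written, and the paper settles it with exactly this example. Second, the candidate family you do propose would not work: a ``ring of digons'' (a cycle with every edge doubled) has hull number $1$, since choosing any single vertex $u$ contaminates its digon-neighbour $v$ via the $2$-cycle on $\{u,v\}$, and the contamination then propagates around the ring; so $\hncc=1\ll n/2$. The family the paper uses for \emph{asymptotic} tightness is different: two disjoint $k$-cycles $(u_1,\ldots,u_k)$ and $(v_1,\ldots,v_k)$ joined by doubled edges $u_iv_i$, for which $\hncc(G_k)=k-1$ on $2k$ vertices, giving ratio $(k-1)/(2k)\to\tfrac{1}{2}$ but never equality for $k\ge 2$. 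The lower bound there comes from the co-convex sets $\{u_i,v_i,u_j,v_j\}$ for $i\neq j$: if fewer than $k-1$ vertices are chosen, the pigeonhole principle yields two untouched ``rungs'' forming such a co-convex set, so by Proposition~\ref{prop:co-convex} the chosen set is not a hull set. Your transversal-of-co-convex-sets plan is the right mechanism, but it must be applied to this family, not to the ring of digons, and it only delivers asymptotic tightness, not an infinite family with exact equality.
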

\begin{proof}
By Lemma~\ref{lem:operations4regular}, whenever we apply one of the operations M4, M2, M3 or T, we decrease the vertex set of the input graph by at least two units, while the value of the hull number of the obtained graph decreases by at most one unit. Since, by Lemma~\ref{lem:Gprime}, the reduction graph of $G$ is the 4-regular graph on two vertices $G'$ and $\hn(G')=1$, the upper bound follows. The bound is attained when $G$ has exactly two vertices, for instance.
\end{proof}

One should notice that, by analysing all cases in Lemma~\ref{lem:operations4regular}, one can actually build a hull set of $G$ from a hull set of its reduction $G'$.

Let us finally present a more general (asymptotically) tight example to such bound (see Figure~\ref{fig:tight}). It consists of the graph $G_k$, for an integer $k\geq 2$, obtained from two disjoint cycles of length $k$, $(u_1,\ldots,u_k)$ and $(v_1,\ldots,v_k)$, by adding edges of multiplicity two $u_iv_i$ for each $i\in \{1,\ldots,k\}$.
Let $S$ be a minimum hull set of $G_k$. We shall prove that $|S|=\hncc(G_k)= k-1$. Let us first prove that $|S|\geq k-1$. Note that if $|S|<k-1$, by the pigeonhole principle there exist $i,j\in\{1,\ldots,k\}$ such that $\{u_i,v_i,u_j,v_j\}\cap S=\emptyset$. Let $X=\{u_i,v_i,u_j,v_j\}$. Note that $X$ is a co-convex set of $G$ as $G$ has no cycle such that the unique vertex not in $V(G)\setminus X$ is in $X$. This contradicts the fact that $S$ is a hull set of $G_k$. On the other hand, one can easily check that $\{u_i\mid i\in\{1,\ldots,k-1\}\}$ is a hull set of $G_k$. Therefore, we deduce that:
\[\frac{\hncc(G_k)}{n(G_k)} = \frac{\hncc(G_k)}{2k}=\frac{k-1}{2k}\rightarrow\frac{1}{2}\]
as $k\rightarrow+\infty.$

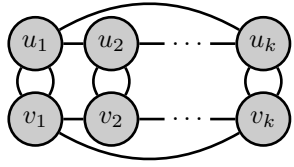
\begin{figure}[thb]
\centering
  \begin{tikzpicture}[scale=1]
  \pgfsetlinewidth{1pt}
  \tikzset{vertex/.style={circle, minimum size=0.7cm, fill=black!20, draw, inner sep=1pt}}
  \node [vertex] (u1) at (0,1){$u_1$};
  \node [vertex] (v1) at (0,0){$v_1$};
  \node [vertex] (u2) at (1,1){$u_2$};
  \node [vertex] (v2) at (1,0){$v_2$};
  \node [vertex] (up) at (3,1){$u_k$};
  \node [vertex] (vp) at (3,0){$v_k$};
  
  \node (rt1) at (2,0){$\ldots$};
  \node (rt1) at (2,1){$\ldots$};
  
  \draw (u1) -- (u2) (v1) -- (v2) (u2) -- (1.7,1) (v2) -- (1.7,0) (2.3,1) -- (up) (2.3,0) -- (vp);
  \path [-] (u1) edge [bend left] node {} (v1);
  \path [-] (u1) edge [bend right] node {} (v1);
  \path [-] (u2) edge [bend left] node {} (v2);
  \path [-] (u2) edge [bend right] node {} (v2);
  \path [-] (up) edge [bend left] node {} (vp);
  \path [-] (up) edge [bend right] node {} (vp);
  
  \path [-] (u1) edge [bend left] node {} (up);
  \path [-] (v1) edge [bend right] node {} (vp);
  
  \end{tikzpicture}

\caption{Asymptotically tight example to Theorem~\ref{theo:4regular}.}
\label{fig:tight}
\end{figure}

\section{\NP-Completeness}\label{sec:npcomplete}

Let us formally define the decision version of the problem we study:

\begin{problem}{\textsc{Hull Number in Cycle Convexity}}\

	\textbf{Input: } A graph $G$ and a positive integer $k$.

	\textbf{Question: } Is $\hncc(G)\leq k$?
\end{problem}

The goal of this section is to prove that:

\begin{theorem}
\label{thm:np-c}
	\textsc{Hull Number in Cycle Convexity} is \NP-Complete, even if the input graph $G$ is a simple planar graph. 
\end{theorem}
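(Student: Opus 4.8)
The plan is to prove membership in \NP and then establish hardness by a polynomial reduction from a planar variant of satisfiability, so that the output graph is automatically simple and planar.

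First I would verify membership in \NP. Given a candidate set $S$ with $|S|\le k$, one computes $\hull(S)$ by iterating the operator $\icc$: at each round, and for each vertex $v$ outside the current set, one tests whether $v$ has two neighbours in a common connected component of the graph induced by the current set (equivalently, whether $v$ closes a cycle all of whose other vertices are already present). Using a union--find structure over the current set, each round costs polynomial time, and since every non-trivial round adds at least one vertex there are at most $n(G)$ rounds. One then accepts if and only if the resulting fixed point equals $V(G)$. Hence the problem lies in \NP.

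For hardness I would reduce from \textsc{Planar 3-SAT}, which is \NP-complete. Given a planar formula $\Phi$ with variables $x_1,\dots,x_n$ and clauses $C_1,\dots,C_m$, I would build a simple planar graph $G_\Phi$ together with a budget $k$, exploiting Propositions~\ref{prop:coconvex_singleton} and~\ref{prop:nocriticalvertex} to control which vertices are forced into every minimum hull set. The construction would use three kinds of gadgets. A \emph{variable gadget} for each $x_i$ is a small planar cyclic block carrying two distinguished literal vertices (for $x_i$ and $\overline{x_i}$), arranged so that every hull set must spend a fixed number of vertices inside the gadget, and this spending forces the contamination of exactly one of the two literal vertices, thereby encoding a truth value. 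A \emph{clause gadget} for each $C_j$ is a cyclic structure whose internal vertices become contaminated precisely when contamination reaches it through at least one of its three incident literal vertices, since a cycle through the gadget closes once a single entry point is on. Finally, \emph{connection paths} join each literal vertex to the clause gadgets in which that literal occurs, laid out according to a planar embedding of the variable--clause incidence graph of $\Phi$, so that $G_\Phi$ stays planar and simple.

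I would then set $k$ to be exactly the total of the per-variable forced costs plus the unavoidable base cost, leaving no slack. With this budget, a hull set of size at most $k$ can exist only if each variable gadget makes a single consistent choice and every clause gadget is fully contaminated; since a clause gadget is contaminated only through a true literal, such a hull set yields a satisfying assignment of $\Phi$, and conversely a satisfying assignment gives such a hull set by selecting the forced vertices and letting contamination propagate along the connection paths and around the clause cycles. The main obstacle I anticipate is the gadget design under three simultaneous constraints: the graph must be \emph{simple} and \emph{planar}; contamination must propagate only through genuine cycles (so paths alone never suffice, unlike in the $P_3$-convexity); and the budget must be tight enough to prevent shortcuts, such as contaminating a clause gadget without a true literal or satisfying a variable gadget with fewer than the intended vertices. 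Ruling out such shortcuts requires analysing which cycles can form at each contamination step, using the characterisation that $S$ is co-convex exactly when every $v\in S$ has at most one edge to a component of $G[V\setminus S]$; it is here that the extra rigidity of the cycle convexity (two contaminated neighbours must lie in the \emph{same} already-contaminated component) both complicates the gadgets and is exploited to force a consistent assignment.
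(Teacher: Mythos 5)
The membership argument is fine and matches the paper. The hardness half, however, is a plan rather than a proof: everything that makes the reduction work is deferred to gadgets that are described only by the properties you want them to have (``arranged so that every hull set must spend a fixed number of vertices\ldots and this spending forces the contamination of exactly one of the two literal vertices''), and you yourself flag the gadget design as ``the main obstacle I anticipate.'' In an \NP-hardness proof that obstacle \emph{is} the proof, so as written there is a genuine gap. Two concrete points where the sketch would fail if implemented naively. First, your description of the clause gadget --- ``a cycle through the gadget closes once a single entry point is on'' --- is not how the cycle convexity works: a vertex is contaminated only when it has two neighbours lying in a \emph{common} already-contaminated component, so one contaminated entry point never fires anything. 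The paper resolves this by giving each clause vertex \emph{two} edges into each incident literal's gadget (e.g.\ $c_jx_i^1$ and $c_jx_i^a$), so that a true literal hands the clause two neighbours in one component; your single ``connection paths'' from literal to clause cannot do this. Second, a hull set must contaminate \emph{all} of $V(G)$, including the structure encoding the false literal of each variable; for that back-propagation the paper needs all clause vertices to end up in one connected component of the hull, which is exactly why it reduces from a \emph{linked} planar variant and inserts a tree of edge gadgets joining the clause vertices. Your construction has no mechanism for contaminating the untruth side of a variable gadget, and without one the forward direction (satisfiable $\Rightarrow$ small hull set) breaks.

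On the converse direction you also owe the reader the co-convexity arguments that make the budget tight: the paper isolates pendant subgraphs $H$ with the property that $C\setminus\{u\}$ is co-convex for every $u\in V(H)$, forcing at least two hull-set vertices per gadget, and it proves a propagation lemma (its Claim, Item (4)) showing contamination cannot leak through an edge gadget before the adjacent clause is contaminated. These are the lemmas that rule out the ``shortcuts'' you mention; naming the characterisation of co-convex sets is not enough --- you need to exhibit the specific co-convex sets in your gadgets and verify them. In summary, the intended route is the same as the paper's (reduction from a planar 3-SAT variant via variable and clause gadgets with a slack-free budget), but the proposal stops exactly where the actual work begins.
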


We shall reduce a variant of \textsc{Planar 3-\SAT} to \textsc{Hull Number in Cycle Convexity}. We first explain which variant we use.


Let $\varphi$ be a boolean formula in 3-\CNF having a set $\mathcal C$ of $m$ clauses over the variables $\mathcal{X} = \{x_1,\ldots, x_n\}$. 
The \emph{$\varphi$-graph} is the graph $G(\varphi)$ such that:
\begin{itemize}
	\item $V(G(\varphi)) = \mathcal{X}\cup \mathcal{C}\cup\{\bar{x}_1,\ldots,\bar{x}_n\}$; and
	\item $E(G(\varphi)) = E_1\cup E_2\cup E_3$, where:
	\begin{itemize}
		\item $E_1 = \{x_i C\mid x_i \text{ appears positively in } C\}$, 
		\item $E_2 = \{\bar{x}_i C\mid x_i \text{ appears negatively in } C\}$, and
		\item $E_3 = \{x_1x_2, x_2x_3, \ldots, x_nx_1, x_1\bar{x}_1,\ldots,x_n\bar{x}_n\}$.
	\end{itemize}
\end{itemize}

We say that $\varphi$ is \emph{linkable} if it is possible to add edges to $G(\varphi)$ in such a way that the obtained graph is planar and that the subgraph induced by $\mathcal{C}$ is connected. We call the obtained graph a \emph{linked $\varphi$-graph}, and call the subgraph formed by the added edges a \emph{link}. Observe that $G(\varphi)$ must be planar itself to start with, and that we can consider that the link only contains edges between vertices in $\mathcal{C}$ as otherwise we could remove some of the edges and still have a linked $\varphi$-graph.

The variation of the \textsc{3-\SAT} problem presented next can be proved to be NP-complete by combining and making slight modifications in the proofs presented in~\cite{Lichtenstein82,Tippenhauer2016,MG08}. Because the needed ideas are not in any way new, we restrain from presenting them here.

\begin{problem}{\textsc{Linked Planar exactly 3-bounded 3-\SAT}}\

	\textbf{Input: } A linkable boolean formula $\varphi$ in 3-\CNF such that each variable of $\varphi$ appears exactly three times: twice positively and once negatively.

	\textbf{Question: } Is $\varphi$ satisfiable?
\end{problem}

In what follows, we present constructions for edge, variable and clause gadgets that will replace the respective structures in a linked $\varphi$-graph $G(\varphi)$, obtaining a planar graph $H$. Then, we prove the formula $\varphi$ on $n$ variables and $m$ clauses is satisfiable if, and only if, $\hncc(H)\le 4n+4m-4$.

Let $G$ be a connected graph and $H\subseteq G$ be an induced subgraph of $G$. We say that $H$ is \emph{pendant at} $v\in V(H)$ if there are no edges between $V(H-v)$ and $V(G-H)$ (in other words, $v$ separates $H$ from the rest of the graph $G$). To construct our variable and clause gadgets, we first need what we call \emph{auxiliary gadget}.
This is the graph $\Lambda$ with vertex set $\{a,b,c,d,e\}$  depicted in Figure~\ref{fig:aux-clause-gadgets}.(a). The dotted edges show how these graphs will be appended to the rest of the constructed graph. 

\begin{figure}
	\centering
	\subfloat[Auxiliary gadget $\Lambda$.]{
	\begin{tikzpicture}[scale=1]
  \pgfsetlinewidth{1pt}
  \tikzset{vertex/.style={circle, minimum size=0.7cm, fill=black!20, draw, inner sep=1pt}}
  \node [vertex] (a) at (-1,1){$a$};
  \node [vertex] (b) at (1,1){$b$};
  \node [vertex] (c) at (0,0){$c$};
  \node [vertex] (d) at (-1,-1){$d$};
  \node [vertex] (e) at (1,-1){$e$};
  \node [vertex,dashed] (n) at (0,2){};
  
  \draw (a) -- (c) -- (b) -- (e) -- (d) -- (a)   (d) -- (c) (c) -- (e);
  \path [-] (a) edge [dashed] node {} (n);
  \path [-] (b) edge [dashed] node {} (n);
  \end{tikzpicture}
	}\hspace{1cm}
	\subfloat[Edge gadget $\Xi_u^v$.]{
	\begin{tikzpicture}[scale=1]
  \pgfsetlinewidth{1pt}
  \tikzset{vertex/.style={circle, minimum size=0.7cm, fill=black!20, draw, inner sep=1pt}}
  \node [vertex] (ci) at (0,0){$c_i$};
  \node [vertex] (r) at (2,0){$r_{i,j}$};
  \node [vertex] (s) at (3,0){$s_{i,j}$};
  \node [vertex] (cj) at (5,0){$c_j$};
  \node [vertex] (q) at (1,1){$q_{i,j}$};
  \node [vertex] (t) at (4,1){$t_{i,j}$};
  
  \draw (ci) -- (r) -- (s) -- (cj) -- (t) -- (s)    (c) -- (q) -- (r);
  
  \def\xpos{1}
  \def\ypos{2.3}
  
  \node [draw,circle] (d) at (\xpos-0.5,\ypos+0.5){};
  \node [draw,circle] (e) at (\xpos+0.5,\ypos+0.5){};
  \node [draw,circle] (c) at (\xpos,\ypos){};
  \node [draw,circle] (a) at (\xpos-0.5,\ypos-0.5){};
  \node [draw,circle] (b) at (\xpos+0.5,\ypos-0.5){};

  \draw (a) -- (c) -- (b) -- (e) -- (d) -- (a)   (d) -- (c) (c) -- (e);
  
  \draw (a)--(q)--(b);
  
  \def\xpos{4}
  \def\ypos{2.3}
  
  \node [draw,circle] (dn) at (\xpos-0.5,\ypos+0.5){};
  \node [draw,circle] (en) at (\xpos+0.5,\ypos+0.5){};
  \node [draw,circle] (cn) at (\xpos,\ypos){};
  \node [draw,circle] (an) at (\xpos-0.5,\ypos-0.5){};
  \node [draw,circle] (bn) at (\xpos+0.5,\ypos-0.5){};

  \draw (an) -- (cn) -- (bn) -- (en) -- (dn) -- (an)   (dn) -- (cn) (cn) -- (en);
  
  \draw (an)--(t)--(bn);
  
  \end{tikzpicture}
	}
	\caption{}
	\label{fig:aux-clause-gadgets}
\end{figure}

Now, consider a linked $\varphi$-graph ${\cal L}$, with link ${\cal T}$. We first explain how to construct the edge gadgets that will replace the edges in ${\cal T}$.
Given an edge $e=c_ic_j\in E({\cal T})$, we denote the related \emph{edge gadget} by $\Xi_{ij}$ (observe Figure~\ref{fig:aux-clause-gadgets}.(b) to follow the construction). Consider the graph with vertex set $\{c_i,c_j,q_{ij}, r_{ij}, s_{ij}, t_{ij}\}$ and having as edges $\{c_iq_{ij}, c_ir_{ij},c_js_{ij},c_jt_{ij},r_{ij}s_{ij},q_{ij}r_{ij},s_{ij}t_{ij}\}$. To obtain $\Xi_e$  we add two copies of the auxiliary graph, one pendant at $q_{ij}$ and the other pendant at $t_{ij}$.


Finally, the \emph{variable gadget} is the graph $\Upsilon_i$ on vertices $A_i = \{a_i^1,\ldots, a_i^8\}$, $B_i = \{b_i^1,\ldots, b_i^8\}$ and $\{x_i^1,x_i^2,x^a_i,\bar{x}^1_i,\bar{x}^2_i\}$ depicted in Figure~\ref{fig:var-gadget}. The dotted edges denote how these graphs will be linked to the clause vertices.

\begin{figure}[!ht]
	\centering
	
	\begin{tikzpicture}[scale=1]
  \pgfsetlinewidth{1pt}
  \tikzset{vertex/.style={circle, minimum size=0.7cm, fill=black!20, draw, inner sep=1pt}}
  \node [vertex] (v) at (0,0){$v_i$};
  \node [vertex] (b1) at (-1,1){$b_i^1$};
  \node [vertex] (b2) at (-2,2){$b_i^2$};
  \node [vertex] (b3) at (-1,2){$b_i^3$};
  \node [vertex] (b4) at (0,2){$b_i^4$};
  \node [vertex] (b5) at (1,2){$b_i^5$};
  \node [vertex] (b6) at (2,2){$b_i^6$};
  \node [vertex] (b7) at (-0.5,3){$b_i^7$};
  \node [vertex] (b8) at (0.5,3){$b_i^8$};
  \node [vertex] (xb1) at (-1,4){$\bar{x}_i^1$};
  \node [vertex] (xb2) at (1,4){$\bar{x}_i^2$};

  \node [vertex] (a1) at (-1,-1){$a_i^1$};
  \node [vertex] (a2) at (-2,-2){$a_i^2$};
  \node [vertex] (a3) at (-1,-2){$a_i^3$};
  \node [vertex] (a4) at (0,-2){$a_i^4$};
  \node [vertex] (a5) at (1,-2){$a_i^5$};
  \node [vertex] (a6) at (2,-2){$a_i^6$};
  \node [vertex] (a7) at (-0.5,-3){$a_i^7$};
  \node [vertex] (a8) at (0.5,-3){$a_i^8$};
  \node [vertex] (x1) at (-1,-4){$x_i^1$};
  \node [vertex] (x2) at (1,-4){$x_i^2$};
  
  \draw (v)-- (b1) -- (b2) -- (b3) -- (b4) -- (b5) -- (b6) -- (v)  (b4)-- (b1) -- (b3) (b4) -- (v) -- (b5) (b3)--(b7)--(b4)--(b8)--(b5) (b2)--(xb1)--(b7) (b8)--(xb2)--(b6) (xb1)--(xb2);
  
  \draw (v)-- (a1) -- (a2) -- (a3) -- (a4) -- (a5) -- (a6) -- (v)  (a4)-- (a1) -- (a3) (a4) -- (v) -- (a5) (a3)--(a7)--(a4)--(a8)--(a5) (a2)--(x1)--(a7) (a8)--(x2)--(a6) (x1)--(x2);

  \def\xpos{-1}
  \def\ypos{5.3}
  
  \node [draw,circle] (d) at (\xpos-0.5,\ypos+0.5){};
  \node [draw,circle] (e) at (\xpos+0.5,\ypos+0.5){};
  \node [draw,circle] (c) at (\xpos,\ypos){};
  \node [draw,circle] (a) at (\xpos-0.5,\ypos-0.5){};
  \node [draw,circle] (b) at (\xpos+0.5,\ypos-0.5){};

  \draw (a) -- (c) -- (b) -- (e) -- (d) -- (a)   (d) -- (c) (c) -- (e);
  
  \draw (a)--(xb1)--(b);
  
  \node [vertex,dashed] (cl) at (1,5.3){$c_{\ell}$};
  \path [-] (xb1) edge [dashed] node {} (cl);
  \path [-] (xb2) edge [dashed] node {} (cl);
  
  \node [vertex] (xa) at (0,-5){$x_i^a$};
  
  \draw (x1)--(xa)--(x2);
  
  \node [vertex,dashed] (cj) at (-1.5,-5.3){$c_j$};
  \path [-] (x1) edge [dashed] node {} (cj);
  \path [-] (xa) edge [dashed] node {} (cj);
  
  \node [vertex,dashed] (ck) at (1.5,-5.3){$c_k$};
  \path [-] (x2) edge [dashed] node {} (ck);
  \path [-] (xa) edge [dashed] node {} (ck);
  
  \end{tikzpicture}
	\caption{Variable gadget $\Upsilon_i$.}
	\label{fig:var-gadget}
\end{figure}


We are now ready to prove Theorem~\ref{thm:np-c}, which we recall:

\thm{\ref{thm:np-c}}{\textsc{Hull Number in Cycle Convexity} is \NP-Complete, even if the input graph $G$ is a simple planar graph.}
\begin{proof}
	First, observe that \textsc{Hull Number in Cycle Convexity} is in \NP, since for a given subset $S\subseteq V(G)$ one can compute $\hull(S)$ in polynomial time and decide whether $\hull(S) = V(G)$.

	To prove the \NP-hardness of the problem, we reduce the \textsc{Linked Planar exactly 3-bounded 3-\SAT} to \textsc{Hull Number in Cycle Convexity}. Let $\varphi$ be an instance of the considered 3-\SAT problem, and let ${\cal L}$ be a linked $\varphi$-graph with link ${\cal T}$. We can consider ${\cal T}$ to be a tree as otherwise we can simply remove some edges. 
	We construct a graph ${\cal L}^*$ such that ${\cal L}^*$ is planar and prove that $\varphi$ is satisfiable if, and only if, $\hncc({\cal L}^*)\leq 4n + 4m-4$. This graph is obtained from ${\cal L}$ as follows: 

\begin{itemize}
\item For each variable $x_i$ of $\varphi$, let $c_j,c_k$  be the clauses in which $x_i$ appears posivitively and $c_\ell$ be the clause in which $x_i$ appears negatively. Replace vertices $x_i,\bar{x}_i$ by $\Upsilon_i$, and add edges $\{c_jx^1_i,c_jx_i^a,c_kx_i^2,c_kx_i^a,c_\ell \bar{x}_i^1, c_\ell\bar{x}_i^2\}$;
\item Replace each edge $c_ic_j$ in $E({\cal T})$ by $\Xi_{ij}$. 
\end{itemize}

	One can verify that ${\cal L}^*$ is planar by construction. Moreover, observe that ${\cal L}^*$ has less than $27n+14m$ vertices, since each variable gadget has~27 vertices, each edge gadget has~14 new vertices, and  since there are $m-1$ edges in ${\cal T}$. Before we can present our proof, we still need some facts concerning hull sets of ${\cal L}^*$, presented below.

\begin{claim}\label{claim:npcomplete}\ 

\begin{enumerate}
\item If $H\subseteq {\cal L}^*$ is an induced subgraph isomorphic to $\Lambda$ and is pendant at $v$, then $|V(H)\cap S|\ge 2$, for every hull set $S$ of ${\cal L}^*$;
\item For every variable $x_i$ of $\varphi$, and every hull set $S$ of ${\cal L}^*$, we have $|(A_i\cup B_i\cup \{v_i\})\cap S|\ge 2$;
\item For every edge $uv\in {\cal L}^*[A_i\cup\{v_i\}]$, we have $A_i\cup\{v_i,x_i^1,x^2_i,x^a_i,c_j,c_k\}\subseteq \hull(\{u,v\})$. The similar statement holds for $B_i$ and the corresponding vertices; and
\item Let $c_ic_j\in E({\cal T})$, and let $S\subseteq V({\cal L}^*)$ not containing $r_{ij}$. If $c_i\notin \icc^{k-1}(S)$, then $r_{ij}\notin \icc^{k}(S)$. The analogous holds for $c_j$ and $s_{ij}$.
\end{enumerate}
\end{claim}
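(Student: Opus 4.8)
The plan is to prove the four items of Claim~\ref{claim:npcomplete} essentially independently, each by exploiting the structure of co-convex sets established in Section~\ref{sec:preliminaries}, in particular Proposition~\ref{prop:co-convex} (which says a hull set must hit every co-convex set) and the fact that a set $X$ is co-convex precisely when no vertex of $X$ is the unique ``missing'' vertex of a cycle whose other vertices lie in $V\setminus X$.

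\textbf{Items (1) and (2).} For item~(1), I would inspect the auxiliary gadget $\Lambda$ on $\{a,b,c,d,e\}$, pendant at $v$, and exhibit a co-convex set $X\subseteq V(H)$ of size~$3$, or argue more directly that any hull set must contain at least two vertices of $V(H)$. Since $H$ attaches to the rest of ${\cal L}^*$ only through $v$ (via the two dotted edges from $a$ and $b$), every cycle through an internal vertex of $H$ other than $a,b$ stays inside $V(H)$; hence contamination of the internal structure must be driven from within. The key observation is that if $|V(H)\cap S|\le 1$, then one can find a nonempty subset of $V(H)\setminus S$ that is co-convex in ${\cal L}^*$ (no vertex of it completes a cycle using only outside vertices, because the only escape route is through the single cut-like vertex $v$), contradicting that $S$ is a hull set. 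I would make this precise by checking the small number of cases for which single vertex (if any) lies in $S$. Item~(2) follows the same template: the subgraph on $A_i\cup B_i\cup\{v_i\}$ (the ``core'' of the variable gadget, excluding the literal vertices) is attached to the rest of ${\cal L}^*$ only through the literal vertices $x_i^1,x_i^2,\bar x_i^1,\bar x_i^2$, and I would again produce a co-convex set inside it whenever fewer than two of its vertices are selected.

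\textbf{Item (3).} This is a forward (contamination) statement rather than a lower-bound statement, so here I would argue directly by percolation. Starting from an edge $uv$ inside the $2$-connected, chordal-like core ${\cal L}^*[A_i\cup\{v_i\}]$, I would repeatedly apply $\icc$: because this core is built from triangles sharing edges (as drawn in Figure~\ref{fig:var-gadget}), the same argument as fact~(*) in the proof of Theorem~\ref{thm:chordal} shows that a single edge contaminates the whole $2$-connected core, so $A_i\cup\{v_i\}\subseteq\hull(\{u,v\})$. Once $a_i^7,a_i^8$ and their neighbors are contaminated, the vertices $x_i^1,x_i^2$ get contaminated through the triangles $(a_i^2,x_i^1,a_i^7)$ and $(a_i^8,x_i^2,a_i^6)$, then $x_i^a$ through $(x_i^1,x_i^a,x_i^2)$, and finally $c_j,c_k$ through the added edges $\{c_jx_i^1,c_jx_i^a,c_kx_i^2,c_kx_i^a\}$, each of which closes a triangle. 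The symmetric statement for $B_i$ uses the negative-literal side and the clause $c_\ell$. I would present this as a chain of $\icc$-applications, naming at each step the triangle that forces the new vertex.

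\textbf{Item (4).} This is the most delicate item and I expect it to be the main obstacle, since it is a statement about the \emph{order} in which vertices can be contaminated across an edge gadget. The claim says $r_{ij}$ cannot be contaminated at step $k$ unless $c_i$ is already contaminated by step $k-1$. The strategy is to examine every cycle of ${\cal L}^*$ through $r_{ij}$ and verify that each such cycle necessarily passes through $c_i$: by construction $r_{ij}$ has neighbors $c_i$, $s_{ij}$, and $q_{ij}$, and both alternative routes (through $q_{ij}$ into its pendant $\Lambda$-copy, or through $s_{ij}$ toward $c_j$) cannot close back to $r_{ij}$ without returning through $c_i$, because the pendant auxiliary gadget at $q_{ij}$ is separated from the rest by $q_{ij}$ and contains no cycle returning to $r_{ij}$. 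Thus any cycle witnessing $r_{ij}\in\icc^{k}(S)$ must contain $c_i$, and since $r_{ij}$ is to be the unique vertex of that cycle outside $\icc^{k-1}(S)$, the vertex $c_i$ must already lie in $\icc^{k-1}(S)$. The hard part is to enumerate the cycles carefully and confirm no ``shortcut'' exists, which requires a precise reading of the gadget's adjacency (Figure~\ref{fig:aux-clause-gadgets}.(b)); the symmetric argument handles $c_j$ and $s_{ij}$.
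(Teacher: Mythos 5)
Your proposal is correct and follows essentially the same route as the paper's proof: items (1) and (2) by exhibiting co-convex sets ($C$ and $C\setminus\{u\}$ for each $u\in C$) that every hull set must hit, item (3) by triangle-by-triangle contamination of the $2$-connected core and then of $x_i^1,x_i^2,x_i^a,c_j,c_k$, and item (4) by the observation that $\{c_i,r_{ij}\}$ separates $q_{ij}$ from $s_{ij}$, which is exactly your statement that every cycle through $r_{ij}$ passes through $c_i$. The only difference is presentational: the paper phrases item (4) in terms of connected components of ${\cal L}^*[\icc^{k-1}(S)]$ rather than cycles, which is an equivalent reading of the contamination rule.
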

\begin{claimproof}
Denote $V(H)$ by $C$, and let $v$ be the vertex of ${\cal L}^*$ on which $H$ is pendant (recall that $v\notin C$). Observe that $C$ is a co-convex set and, by Proposition~\ref{prop:co-convex}, it follows that $S\cap C\neq\emptyset$. One can also verify that, because $\{v,a,b\}$ separates $\{c,d,e\}$ from the rest of the graph, and since $ab\notin E(H)$, we get that $C\setminus \{u\}$ is also a co-convex set, for every $u\in C$. This implies Item (1).

Item (2) follows similarly. Let $C = (A_i\cup B_i\cup \{v_i\})$. Again, note that $C$ is co-convex and that $C\setminus \{u\}$ is also co-convex, for every $u\in C$.

To verify Item (3), first observe that $A_i\cup\{v_i\}$ induces a graph formed only by triangles and can be entirely contaminated by any of its edges. The item follows because $\{x_i^1,x_i^2,x_i^a\}$ is clearly contained in $\hull(A_i\cup\{v_i\})$.

Finally, let $S$ be as in Item (4), and suppose by contradiction that $c_i\notin \icc^{k-1}(S)$ and $r_{ij}\in \icc^{k}(S)$. This means that there exists a component of ${\cal L}^*[\icc^{k-1}(S)]$ containing $q_{ij}$ and $s_{ij}$. This is a contradiction since neither $c_i$ nor $r_{ij}$ is in $\icc^{k-1}(S)$, and $\{c_i,r_{ij}\}$ separates $q_{ij}$ from $s_{ij}$.
\end{claimproof}\vspace{0.2cm}

	Let us now prove that $\varphi$ is satisfiable if, and only if, $\hncc({\cal L}^*)\leq 4n+4m-4$.

Suppose first that $\varphi$ is satisfiable. Let us construct a hull set $S\subseteq V({\cal L}^*)$ such that $|S| = 4n+4m-4$. For every pendant auxiliary graph in ${\cal L}^*$, add any two adjacent vertices of such auxiliary graph to $S$, say vertices $d$ and $e$. Note that there are $n+2m-2$ such pendant auxiliary graphs, one for each variable and two for each edge of the tree ${\cal T}$. Finally, consider a truth assignment to the variables $x_1,\ldots,x_n$ satisfying $\varphi$. In case $x_i$ is true, add to $S$ the vertices $v_i$ and $a_i^1$. Otherwise, add to $S$ the vertices $v_i$ and $b_i^1$. We then get $|S| = 2\cdot(n+2m-2)+2n = 4n+4m-4$. Observe that this is the smallest size of a hull set by Claim~\ref{claim:npcomplete}, Items (1) and (2). It remains to show that $S$ is indeed a hull set.


	It is easy to verify that: (*) $H\cup \{v\}\subseteq\hull(S)$ for every auxiliary graph $H$ pendant at a vertex $v$. Also, by Claim~\ref{claim:npcomplete}, Item (3), we get that: if $x_i$ is true, then $A_i\cup\{v_i,x_i^1,x^2_i,x^a_i,c_j,c_k\}\subseteq \hull(\{v_i,a^1_i\})\subseteq \hull(S)$; while if $x_i$ is false, then $B_i\cup\{v_i,\bar{x}^1_i, \bar{x}^2_i,c_\ell\}\subseteq \hull(\{v_i,b_i^1\})\subseteq \hull(S)$. Because every gadget contains a truth literal, this means that ${\cal C}\subseteq \hull(S)$. This and (*) imply that $\hull(S)$ contains every edge gadget, which means that: (**) ${\cal C}$ is contained in the same connected component of $\hull(S)$. Now it remains to prove that the ``untruth'' side of the variable gadgets are also in $\hull(S)$. So consider any variable $x_i$, and let $c_j,c_k$ be the clauses that contain $x_i$ positively, and $c_\ell$ be the clause containing the negation of $x_i$. First, suppose that $x_i$ is true. By (*) and (**),  we get $\{\bar{x}^1_i,c_\ell\}\subseteq\hull(S)$, and hence $\bar{x}^2_i\in\hull(S)$. But in this case, by (**) and since $\{v_i,a_i^6,x^2_i,c_k\}\subseteq \hull(S)$, we get that $b_i^6\in\hull(S)$. It follows from Claim~\ref{claim:npcomplete} that $B_i\subseteq\hull(\{v_i,b^6_i\})\subseteq \hull(S)$. When $x_i$ is false, by (**), we get that $\hull(S)$ contains $x_i^a$ and consequently $x_i^2$ and $a_i^6$; the rest follows similarly.

	Now, let $S$ be a hull set of ${\cal L}^*$ such that $|S|\leq 4n+4m-4$. As before, we know that equality must occur by Claim~\ref{claim:npcomplete}, Items (1) and (2). Let $x_i$ be true if, and only if, $A_i\cap S \neq \emptyset$. One can verify that Claim~\ref{claim:npcomplete}, Item (2), and the size of $S$ tell us that at most one between $A_i$ and $B_i$ has non-empty intersection with $S$; hence, the assignment is well-defined. We prove that this assignment satisfies $\varphi$.   For this, consider $c_i$ to be the clause $(\ell_{i_1}\vee \ell_{i_2}\vee \ell_{i_3})$, where each $\ell_{i_j}$ denotes either the literal $x_{i_j}$ or $\bar{x}_{i_j}$, and suppose by contradiction that $c_i$ is not satisfied. We prove that in this case $c_i\notin\hull(S)$, thus getting a contradiction.

Let $k\in\Nat^*$ be such that $c_i\in \icc^k(S)\setminus \icc^{k-1}(S)$, and denote by $S^{k-1}$, $S^k$ the sets $\icc^{k-1}(S)$, $\icc^{k}(S)$, respectively. Let $H$ be the component of ${\cal L}^*[S^{k-1}]$ containing two neighbors of $c_i$; call them $u$ and $v$. Let $c_ic_j$ be an edge of ${\cal T}$ and note that, by Claim~\ref{claim:npcomplete}, Item (1), and the cardinality of $S$,  we know that $r_{i,j}\notin S$. By Claim~\ref{claim:npcomplete}, Item (4), we get that $r_{i,j}\notin S^{k-1}$. This means that $u$ and $v$ must be inside the variable gadgets related to $c_i$. 

Now, suppose that $\ell_{i_1}=x_{i_1}$, i.e., the literal is a positive occurrence of the variable; also suppose, without loss of generality, that $c_i$ is adjacent to $x_{i_1}^1$. Because $c_i$ is not satisfied, we know that $A_{i_1}\cap S=\emptyset$. Also, observe that no vertex of $A_{i_1}$ can be contaminated before $x_{i_1}^1$ and $x^2_{i_1}$, and in turn these vertices cannot be contaminated before $x^a_{i_1}$. However, since $x^a_{i_1}$ have exactly two neighbors outside of the edge gadget, we get that it depends on $c_i$ to be contaminated. In short, the facts that $c_i\notin S^{k-1}$ and $A_{i_1}\cap S=\emptyset$ imply $\{x^a_{i_1},x^1_{i_1},x^2_{i_1}\}\cap S^{k-1}=\emptyset$. By applying a similar argument for the case $\ell_{i_1}=\bar{x}_{i_1}$, and since $i_1$ was arbitrarily chosen from $\{i_1,i_2,i_3\}$, we get that, in fact, $c_i$ has no contaminated neighbors inside the related variable gadgets, a contradiction.
%
%
	\end{proof}


\section{Polynomial-time algorithm for $P_4$-sparse graphs}\label{sec:poly}

In this section, we just refer to simple graphs.

One should notice that, by the equalities presented in Section~\ref{sec:preliminaries}, the hull number in the cycle convexity of chordal graphs and grids can be obtained in linear time.
In this section, we prove that one can compute $\hncc(G)$ in polynomial time when $G$ is a $P_4$-sparse graph.

It is well known that $P_4$-sparse graphs have a very nice decomposition, but before we present it, we need some new definitions. 

Given graphs $G_1$ and $G_2$, the \emph{union} of $G_1$ and $G_2$ is the graph $G_1\vee G_2 = (V(G_1)\cup V(G_2), E(G_1)\cup E(G_2))$. The \emph{join} of $G_1$ and $G_2$ is the graph obtained from $G_1\vee G_2$ by adding every possible edge having one endpoint in $V(G_1)$ and the other in $V(G_2)$, and it is denoted by $G_1\wedge G_2$.

A graph $G$ is a \emph{spider} if $V(G)$ can be partitioned into sets $K$, $S$ and $R$ such that:
\begin{itemize}
    \item $|K|=|S|\ge 2$ and $R$ may be empty;
    \item $K$ is a clique, $S$ is a stable set;
    \item each vertex $R$ is adjacent to each vertex of $K$ and to no vertex of $S$;
    \item there is a bijection $f:S\to K$ such that either $N(s) = \{f(s)\}$ for every $s\in S$ (in which case we say that $G$ is a \emph{thin spider}), or $N(s)=K\setminus \{f(s)\}$ for every $s\in S$ (in which case we say that $G$ is a \emph{fat spider}).
\end{itemize}

The subgraph $G[R]$ is called the \emph{head} of the spider.

\begin{theorem}[\cite{JO.95}]\label{theo:decomposition}
Let $G$ be a non-trivial $P_4$-sparse graph. Then, one of the following holds:
\begin{enumerate}
  \item $G$ is the union of two $P_4$-sparse graphs;
  \item $G$ is the join of two $P_4$-sparse graphs; or
  \item $G$ is a spider with partition sets $K,S,R$ such that $G[R]$ is a $P_4$-sparse graph.
\end{enumerate}
\end{theorem}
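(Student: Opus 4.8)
The plan is to prove this structural theorem by \emph{modular decomposition}, exploiting two basic closure properties of the class. First, $P_4$-sparse graphs are hereditary: any induced subgraph of a $P_4$-sparse graph is again $P_4$-sparse, directly from the definition. Second, they are closed under complementation, since $P_4$ is self-complementary, so any five vertices induce the same number of $P_4$'s in $G$ and in $\bar G$. Heredity guarantees that every piece produced by the decomposition is again $P_4$-sparse, and complement-closure lets me treat the union and join cases symmetrically.

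First I would invoke Gallai's modular decomposition theorem: for a graph $G$ with at least two vertices, exactly one of the following occurs --- $G$ is disconnected; $\bar G$ is disconnected; or both $G$ and $\bar G$ are connected and the partition of $V(G)$ into its maximal strong modules yields a \emph{prime} quotient graph $Q$ on at least four vertices. In the first case $G$ is the union of two $P_4$-sparse graphs (one component, and the union of the remaining ones), giving item~1; taking complements, the second case gives item~2, the join, using complement-closure. Thus the whole content is concentrated in the prime case.

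The crux, and the step I expect to be the main obstacle, is to show that in the prime case $G$ is a spider. I would first note that the quotient $Q$ is itself prime and $P_4$-sparse: a quotient is isomorphic to the subgraph induced by one representative of each module, and $P_4$-sparseness is hereditary. Since a prime graph is not a cograph, $Q$ contains an induced $P_4$, say on $(a,b,c,d)$, and I would analyze how every other vertex of $Q$ attaches to it. The defining constraint --- no five vertices induce two distinct $P_4$'s --- is extremely restrictive on these attachment patterns, and a finite case check should show that each further vertex must enter either as a new clique vertex (adjacent to the existing midpoints) or as a stable vertex pendant to a single clique vertex, matching the thin-spider pattern (and dually the fat-spider pattern in the complement). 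Globalizing this local analysis, and using primeness to forbid nontrivial modules (which rules out twins and degenerate configurations), should force $Q$ to admit a partition into a clique $K$ and a stable set $S$ with a bijection $f\colon S\to K$ realizing exactly the thin- or fat-spider adjacency; that is, $Q$ is a headless spider.

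Finally I would reassemble the modular decomposition by substituting each maximal strong module back for its representative vertex of $Q$. I would argue that any module placed at a vertex of $K$ or of $S$ must be a single vertex: a nontrivial such module, together with an induced $P_4$ through the corresponding spider vertex, produces a five-vertex set carrying two distinct $P_4$'s, contradicting $P_4$-sparseness. Hence only the module substituted at the (at most one) head vertex of $Q$ may be nontrivial, and it becomes the head $R$; by heredity $G[R]$ is $P_4$-sparse. This yields item~3 with the required properties and completes the three cases. The delicate point throughout is the prime-case classification: one must carry the thin and fat spider possibilities in parallel (they are complements, so complement-closure halves the work) and verify that the local attachment analysis around a single $P_4$ genuinely propagates to a global spider structure, which is precisely where primeness is used.
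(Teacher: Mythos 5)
The paper does not prove this statement; it is quoted from~\cite{JO.95}, so your attempt can only be measured against the literature. Your overall architecture is the standard one and its routine parts are sound: $P_4$-sparseness is hereditary and closed under complementation, Gallai's trichotomy (disconnected / co-disconnected / prime quotient) reduces everything to the prime case, and your reassembly argument is correct --- every vertex of $K\cup S$ in a headless spider lies on an induced $P_4$, so replacing it by a second member of a nontrivial module would put two induced $P_4$'s on five vertices, which is exactly why only the head module may be nontrivial and becomes $R$.

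The genuine gap is the prime case, which is where the entire content of the theorem lives and which you defer to ``a finite case check should show''. What the local check actually yields is weaker than what you claim: if $(a,b,c,d)$ is an induced $P_4$, the only neighbourhoods a fifth vertex $v$ may have in $\{a,b,c,d\}$ without creating a second $P_4$ on those five vertices are $\emptyset$, $\{b,c\}$ (the two midpoints) and $\{a,b,c,d\}$. In particular a vertex may attach to nothing at all, so the analysis around a single $P_4$ does not tell you that such a vertex is a stable-set vertex pendant at exactly one clique vertex; nor does it separate the thin pattern from the fat one, nor produce the bijection $f\colon S\to K$ with $|S|=|K|$. To finish one must propagate these constraints across several interacting $P_4$'s, invoke primeness to exclude twins and force the matching, and show the thin and fat patterns cannot mix. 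That work is not routine bookkeeping --- it is the theorem --- so as written the proposal is a correct plan rather than a proof.
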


While the join and the disjoint union may easily be seen as graph operations, the spider is often also considered to be one, in which we take a $P_4$-sparse graph $H$ and we place $H$ as the head of some spider.

First, we investigate the union and the join of graphs.

\begin{lemma}
\label{lem:join_disjunion}
If $G_1$ and $G_2$ are graphs such that $|V(G_1)|\le |V(G_2)|$ and $G_2$ has $p$ components, then:
\[\hncc(G_1\vee G_2) = \hncc(G_1)+\hncc(G_2)\mbox{, and}\]
\[\hncc(G_1\wedge G_2) = \left\{
\begin{array}{ll}
p+1 & \mbox{, if $|V(G_1)| = 1$;}\\
2 & \mbox{, if $|V(G_1)|>1$ and $E(G_i)\neq \emptyset$ for some $i\in \{1,2\}$;}\\
3 & \mbox{, otherwise.}
\end{array}\right.\]
\end{lemma}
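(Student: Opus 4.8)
The plan is to dispose of the disjoint union first, where everything is immediate. Since $G_1\vee G_2$ has no edges between $V(G_1)$ and $V(G_2)$, every cycle of $G_1\vee G_2$ lies entirely inside one of the two parts, so applying $\icc$ to a set $S$ acts independently on $S\cap V(G_1)$ and on $S\cap V(G_2)$. I would record the resulting identity $\hull(S)=\hull(S\cap V(G_1))\cup\hull(S\cap V(G_2))$, from which $S$ is a hull set of $G_1\vee G_2$ if and only if its traces on $V(G_1)$ and $V(G_2)$ are hull sets of $G_1$ and of $G_2$; minimizing over $S$ yields $\hncc(G_1\vee G_2)=\hncc(G_1)+\hncc(G_2)$. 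Note that here the number $p$ of components plays no role.

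For the join I would prove the upper bounds by exhibiting small percolating sets, relying on triangle and $4$-cycle propagation. When $|V(G_1)|=1$, write $u$ for the single (universal) vertex and pick one vertex $v_i$ in each of the $p$ components $C_i$ of $G_2$; the set $\{u,v_1,\ldots,v_p\}$ percolates, since inside each $C_i$ a $G_2$-neighbor $w$ of an already contaminated vertex $v'$ is caught by the triangle $u,v',w$, and connectivity of $C_i$ finishes the job, giving $\hncc\le p+1$. When $|V(G_1)|>1$ and some $G_i$ has an edge $xy$, I would take $S=\{x,y\}$: every vertex on the opposite side is adjacent to both $x$ and $y$, so triangles contaminate that whole side at once, and since it has at least two vertices $b_1,b_2$, each remaining vertex $a$ lies on the $4$-cycle $a,b_1,x,b_2,a$ with all other vertices already contaminated, so $\hncc\le 2$. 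When both $G_1,G_2$ are edgeless the join is complete bipartite, and taking $a_1,a_2$ on one side and $b_1$ on the other contaminates the far side through $4$-cycles $b,a_1,b_1,a_2,b$ and then the near side through $4$-cycles $a,b_1,a_1,b_2,a$, giving $\hncc\le 3$.

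The lower bounds carry the real content. For $|V(G_1)|>1$ with an edge present, $\hncc\ge 2$ is automatic, since a single vertex percolates nothing once $n\ge 2$. For the edgeless (complete bipartite) case I would observe that $G_1\wedge G_2$ is triangle-free, and that in any triangle-free graph a two-element set $\{p,q\}$ satisfies $\icc(\{p,q\})=\{p,q\}$ (a third vertex could only be added via a triangle on $p,q$); hence no two-element set is a hull set and $\hncc\ge 3$. The hard part, and the main obstacle, is the bound $\hncc\ge p+1$ when $|V(G_1)|=1$. Here I would invoke the co-convex characterization to show that each component $V(C_i)$ of $G_2$ is co-convex in $G_1\wedge G_2$: every vertex of $C_i$ has its unique outside neighbor $u$, which forms a single component of the complement, so no vertex of $V(C_i)$ can be generated. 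By Proposition~\ref{prop:co-convex} any hull set $S$ must meet each $V(C_i)$, forcing $|S\cap V(G_2)|\ge p$. If $u\in S$ this already gives $|S|\ge p+1$; and if $u\notin S$ with $|S|=p$, then $S$ consists of exactly one vertex per component, and I would verify directly that this set is convex — an internal cycle of a component would require two $S$-vertices besides the new one, which is impossible, while any cycle through $u$ is useless because $u\notin S$. Thus no hull set of size $p$ exists, yielding $\hncc\ge p+1$; the delicate point is exactly this convexity check that pins down the extra $+1$.
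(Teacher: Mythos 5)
Your proof is correct, and for the disjoint union and for the upper bounds in all three join cases it coincides with the paper's argument (triangles through an edge on one side, then $4$-cycles back across the bipartition; in the edgeless case, $4$-cycle propagation from $\{a_1,a_2,b_1\}$; triangle-freeness for the lower bound of $3$). The one place where you take a genuinely different route is the lower bound $\hncc(G_1\wedge G_2)\ge p+1$ when $V(G_1)=\{u\}$. The paper observes that the blocks of the join are exactly the components of $G_2$ augmented by $u$ and then invokes the block-counting bound of Corollary~\ref{cor:hngep+1}, which was proved earlier by induction via the cut-vertex decomposition of Lemma~\ref{lem:blocks}. You instead argue directly: each component vertex set $V(C_i)$ is co-convex (every vertex of $C_i$ has only the single neighbor $u$ outside $C_i$, so no cycle witnesses it), forcing $p$ vertices of any hull set into $G_2$ by Proposition~\ref{prop:co-convex}, and then you rule out the candidate transversal of size exactly $p$ with $u$ omitted by checking it is itself convex. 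Your version is self-contained and localizes exactly where the extra $+1$ comes from, at the cost of the explicit convexity verification; the paper's version is shorter here because it reuses machinery already established. Both are sound.
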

\begin{proof}
As the first equation is straightforward, we just prove the second one.

Let $G=G_1\wedge G_2$. Suppose first that $V(G_1) = \{u\}$. Then, observe that $u$ is a universal vertex (i.e. $N(u)=V(G)\setminus\{u\}$), and that the blocks of $G$ are the subgraphs of $G$ composed by the components of $G_2$ plus the vertex $u$. By Corollary~\ref{cor:hngep+1}, we get that $\hncc(G)\ge p+1$. On the other hand, because $u$ is universal in $G$, for each component $C$ of $G_2$ and every $v\in V(C)$, we have that $V(C)\subseteq \hull(\{u,v\})$. This means that if we pick one vertex of each component of $G_2$ together with vertex $u$, we obtain a hull set $S$ of $G$ with $p+1$ vertices. Thus, $\hncc(G)\le p+1$. 

Now, suppose that $1<|V(G_1)|\leq V(G_2)$ and $E(G_2)\neq \emptyset$. Again, since $G$ is simple and $|V(G)|\geq 2$, note that $\hncc(G)\geq 2$. On the other hand, let $x,y\in V(G_1)$ and $uv\in E(G_2)$. We claim that $\{u,v\}$ is a hull set of $G$. In fact, note that $V(G_1)\subseteq \icc(\{u,v\})$. Then, for each vertex $w\in V(G_2)\setminus \{u,v\}$, observe that there is a cycle in $G$ formed by the vertices $w,x,u,y$. Consequently, $V(G_2)\subseteq \iccp{2}(\{u,v\})$. The same argument can be applied if $E(G_1)\neq \emptyset$ instead.

Finally, suppose that $1<|V(G_1)|\leq V(G_2)$ and $E(G_1) = E(G_2) = \emptyset$. Consequently, $G$ is bipartite and, thus, triangle-free. Therefore, since $G$ is simple and $|V(G)|\geq 4$, note that $\hncc(G)\geq 3$. On the other hand, let $x,y\in V(G_1)$ and $u,v\in V(G_2)$. We claim that $S=\{x,y,u\}$ is a hull set of $G$. In fact, note that, for each vertex $w\in V(G_2)\setminus \{u\}$, there is a cycle in $G$ formed by the vertices $w,x,u,y$. Consequently, $V(G_2)\subseteq \icc(\{u\})$. Now, observe that for each vertex $w\in V(G_1)\setminus\{x,y\}$, there is a cycle in $G$ formed by the vertices $w,v,x,u$. Consequently, $V(G_1)\subseteq \iccp{2}(\{u\})$.
\end{proof}

The next lemma is the last needed tool.

\begin{lemma}
\label{lem:spider}
Let $G$ be a spider with partition sets $\{K,S,R\}$ such that $K$ is a clique and $S$ is a stable set. Then,
\[\hncc(G) = \left\{\begin{array}{ll}
2 & \mbox{, if $|K|\ge 3$ and $G$ is a fat spider,}\\
2+|S| & \mbox{, otherwise.}
\end{array}\right.\]
\end{lemma}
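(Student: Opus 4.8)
The plan is to split the analysis according to whether the vertices of $S$ lie on cycles, which is exactly what separates the two branches of the formula. First I would record that a vertex $s\in S$ has neighbourhood $\{f(s)\}$ in a thin spider and $K\setminus\{f(s)\}$ in a fat spider, and in either case $s$ has no neighbour in $S\cup R$. Hence, if $G$ is thin, or if $G$ is fat with $|K|=2$, then $|K\setminus\{f(s)\}|\le 1$, so every $s\in S$ is a leaf and, by Proposition~\ref{prop:coconvex_singleton}, lies on no cycle; whereas if $G$ is a fat spider with $|K|\ge 3$, then $s$ has at least two neighbours in the clique $K$, so $s$ lies on a triangle. This dichotomy is precisely the case distinction in the statement, since the fat case with $|K|=2$ coincides with a thin spider (each $s$ is adjacent to a single vertex of $K$) and thus falls under the ``otherwise'' branch.

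For the ``otherwise'' branch I would strip the legs one at a time. Each $s\in S$ is a leaf of $G$ and remains a leaf after deleting any other leg, so applying Corollary~\ref{cor:removesingletoncoconvex} exactly $|S|$ times yields $\hncc(G)=|S|+\hncc(G[K\cup R])$. It then remains to show $\hncc(G[K\cup R])=2$. The lower bound is immediate: $G[K\cup R]$ is simple with at least $|K|\ge 2$ vertices, and a single vertex is always convex, so its hull number is at least $2$. For the upper bound I would seed two adjacent vertices $k_1,k_2\in K$: since $K$ is a clique, Proposition~\ref{prop:complete} gives $K\subseteq\icc(\{k_1,k_2\})$, and then every $r\in R$ is contaminated through the triangle $r k_1 k_2$, as $r$ is adjacent to all of $K$. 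Hence $\hull(\{k_1,k_2\})=K\cup R$ and $\hncc(G[K\cup R])=2$, giving $\hncc(G)=|S|+2$.

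For the fat spider with $|K|\ge 3$ I would argue $\hncc(G)=2$ directly. The lower bound is again the trivial ``simple graph on at least two vertices'' bound. For the upper bound I would take two adjacent vertices $k_1,k_2\in K$; Proposition~\ref{prop:complete} contaminates all of $K$, each $r\in R$ is then contaminated by the triangle $r k_1 k_2$, and finally each $s\in S$ is contaminated by a triangle $s k k'$, where $k,k'$ are two neighbours of $s$ in $K$ (these exist because $|K\setminus\{f(s)\}|=|K|-1\ge 2$ and $K$ is a clique). Thus $\hull(\{k_1,k_2\})=V(G)$ and $\hncc(G)=2$.

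I do not expect any single step to be a genuine obstacle here; the only points that need care are (i) correctly classifying the fat spider with $|K|=2$ as a leaf case so that the two formulas agree, and (ii) verifying that deleting one leg keeps the remaining legs as leaves, so that Corollary~\ref{cor:removesingletoncoconvex} applies $|S|$ times and produces the clean reduction $\hncc(G)=|S|+\hncc(G[K\cup R])$.
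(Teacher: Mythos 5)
Your proposal is correct and follows essentially the same route as the paper: reduce the ``otherwise'' case by stripping the degree-one legs with Corollary~\ref{cor:removesingletoncoconvex} to get $\hncc(G)=|S|+\hncc(G[K\cup R])$, and handle the fat case with $|K|\ge 3$ by seeding two clique vertices. The only cosmetic difference is that you verify $\hncc(G[K\cup R])=2$ directly via the triangles $rk_1k_2$, whereas the paper cites Lemma~\ref{lem:join_disjunion} when $R\neq\emptyset$; both are valid.
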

\begin{proof}
Once more, recall that $G$ is simple and $|V(G)|\geq 4$ (since $|S|=|K|\geq 2$) and therefore $\hncc(G)\geq 2$.

By definition, note that if $|K|\ge 3$ and $G$ is a fat spider, then every vertex of $G$ has at least two neighbors in $K$. Consequently, we claim that a subset $S = \{u,v\}$ such that $u,v\in K$ is a hull set of $G$. In fact, note that $K\subseteq \icc(S)$ and thus $V(G)\subseteq\iccp{2}(S)$. It follows that $\hncc(G)=2$.

Now observe that if $|K|=2$, then $G$ is both fat and thin. Thus, we may just assume that $G$ is thin. By definition, each vertex of $S$ has exactly one neighbor in $K$.
Since each vertex of $S$ has degree one, by Corollary~\ref{cor:removesingletoncoconvex}, we deduce that $\hncc(G)=\hncc(G-S)+|S|$. We just have to prove that $\hncc(G-S)=2$. If $R=\emptyset$, note that the graph $G-S = G[K\cup R]$ is a complete graph on $|K|\geq 2$ vertices and thus, as observed in Proposition~\ref{prop:complete}, $\hncc(G-S)=2$.
Otherwise, we can see $G-S$ as the join of two graphs $G_1=G[R]$ and $G_2=G[K]$. Depending on whether $|R|=1$, one may verify that $\hncc(G-S)=2$ by using Lemma~\ref{lem:join_disjunion}.
\end{proof}

The operations presented in Theorem~\ref{theo:decomposition} actually characterize the modular decomposition of $P_4$-sparse graphs, which can be obtained in linear time~\cite{MS94}. This decomposition is often used to obtain polynomial-time algorithms to compute graph parameters~\cite{JO.95}. Such decomposition is represented by a rooted tree in which the leaves are the vertices of $G$ and each non-leaf vertex $t$ represents a subgraph $G_t$ of $G$ obtained after the application of one the operations - join, disjoint union, spider - over the subgraphs $G_{t_1}, \ldots, G_{t_p}$ represented by the children $t_1,\ldots, t_p$ of $t$ in $T$. Thus, in a post-order traversal of this tree $T$, we actually build the graph $G$ from trivial graphs by applying one of the operations. This means that we can use such tree to compute $\hncc(G)$, whenever $G$ is a $P_4$-sparse graph.

\begin{theorem}
If $G$ is a $P_4$-sparse graph, then $\hncc(G)$ can be computed in linear time.
\end{theorem}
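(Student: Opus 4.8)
The plan is to turn the structural decomposition of Theorem~\ref{theo:decomposition}, together with the closed-form expressions of Lemmas~\ref{lem:join_disjunion} and~\ref{lem:spider}, into a bottom-up dynamic-programming procedure over the modular decomposition tree $T$ of $G$. As noted before the statement, $T$ can be computed in linear time~\cite{MS94}, its leaves are the vertices of $G$, and each internal node $t$ is labelled by one of the three operations (disjoint union, join, spider) acting on the subgraphs $G_{t_1},\ldots,G_{t_p}$ attached to its children. A post-order traversal of $T$ therefore rebuilds $G$ step by step, and at each node I would compute $\hncc(G_t)$ from the values already stored at its children.

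The first thing I would observe is that the lemmas do not let us compute $\hncc$ from the children's hull numbers alone: the join formula needs the orders $|V(G_{t_i})|$, the number of connected components, and whether some part carries an edge, while the spider formula needs $|K|$, $|S|$ and the fat/thin type. Accordingly, I would store at every node $t$ the quadruple $(\hncc(G_t),\, n(G_t),\, \kappa(G_t),\, \epsilon(G_t))$, where $\kappa$ is the number of components of $G_t$ and $\epsilon$ is a boolean recording whether $G_t$ has at least one edge. The base case is a leaf, a single vertex, whose quadruple is $(1,1,1,\text{false})$: a single vertex is a forest, so $\hncc=1$ by Corollary~\ref{cor:hulltrees}.

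For the inductive step I would treat the three node types separately. At a union node the quadruple aggregates trivially: $\hncc$, $n$, and $\kappa$ are the corresponding sums over the children and $\epsilon$ is their disjunction, the $\hncc$ part being exactly the first equation of Lemma~\ref{lem:join_disjunion}. At a spider node I would read $|K|$, $|S|=|K|$ and the fat/thin type directly from $T$ and apply Lemma~\ref{lem:spider}; here $\kappa(G_t)=1$ and $\epsilon(G_t)=\text{true}$ since $K$ is a clique of size at least two, while $n(G_t)=2|K|+n(G[R])$ uses the order already stored at the head child. The only case where the $k$-ary structure needs care is the join node: since the join operation is associative, I would fold the children together two at a time, at each step invoking the three-case formula of Lemma~\ref{lem:join_disjunion} on the current partial join and the next child. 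Because that lemma holds for arbitrary graphs (not just $P_4$-sparse ones), each partial join is a legitimate input, and I can keep its quadruple current, noting that after the first fusion the accumulator is connected ($\kappa=1$) and edge-bearing ($\epsilon=\text{true}$).

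The point to get right, rather than a deep obstacle, is the bookkeeping at the join node: one must check that the three cases of Lemma~\ref{lem:join_disjunion} are separated by exactly the data held in the quadruple, in particular that the $p+1$ case is triggered precisely when the smaller of the two graphs being fused is a single vertex, in which case $p$ equals the stored component count of the other graph. Granting this, correctness follows by induction on the height of $T$, and the running time is linear: building $T$ is linear, and the total work at the internal nodes is proportional to the overall number of children, which is $O(n(G))$ since $T$ is a tree with $n(G)$ leaves.
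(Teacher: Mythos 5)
Your proposal is correct and follows essentially the same route as the paper: a post-order (bottom-up) traversal of the linear-time-computable modular decomposition tree, applying Lemmas~\ref{lem:join_disjunion} and~\ref{lem:spider} at each internal node. The paper states this in one sentence; your version merely makes explicit the auxiliary data (order, component count, edge flag) and the associative folding at join nodes that the paper leaves implicit.
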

\begin{proof}
By using Lemmas~\ref{lem:join_disjunion} and~\ref{lem:spider}, we need only to traverse the decomposition tree of $G$ in post-order to compute $\hncc(G)$. Because the decomposition tree can be computed in linear time, the theorem holds.
\end{proof}

%
%
%
%
%
%

\section*{Dedicatory}
While this paper was under preparation, Darlan Gir\~ao was diagnosed with cancer. After a prolonged courageous and dignifying battle with his condition, Darlan died before we could finish this work together. Darlan has been a very good friend and colleague, who we will miss. This paper is dedicated to his memory.

\bibliographystyle{elsarticle-num}
\bibliography{CycleConvexity}

\end{document}